\numberwithin{equation}{section}
\newtheorem{thm}{Theorem}[section]
\newcommand{\bt}{\begin{thm}}
\newcommand{\et}{\end{thm}}
\newtheorem{cor}[thm]{Corollary}   
\newcommand{\bc}{\begin{cor}}
\newcommand{\ec}{\end{cor}}
\newtheorem{lem}[thm]{Lemma}   
\newcommand{\bl}{\begin{lem}}
\newcommand{\el}{\end{lem}}
\newtheorem{prop}[thm]{Proposition}
\newcommand{\bp}{\begin{prop}}
\newcommand{\ep}{\end{prop}}
\newtheorem{defn}[thm]{Definition}
\newcommand{\bd}{\begin{defn}}    
\newcommand{\ed}{\end{defn}}
\newtheorem{rmrk}[thm]{Remark}   
\newcommand{\br}{\begin{rmrk}}
\newcommand{\er}{\end{rmrk}}
\newcommand{\thmref}[1]{Theorem~\ref{#1}}
\newcommand{\secref}[1]{Section~\ref{#1}}
\newcommand{\lemref}[1]{Lemma~\ref{#1}}
\newcommand{\defref}[1]{Definition~\ref{#1}}
\newcommand{\propref}[1]{Proposition~\ref{#1}}
\newcommand{\be}{\begin{equation}}
\newcommand{\ee}{\end{equation}}
\newcommand{\N}{\mathbb{N}}
\newcommand{\R}{\mathbb{R}}
\newcommand{\Z}{\mathbb{Z}}
\newcommand{\g}{\overline{g}}
\newcommand{\e}{\overline{e}}
\newcommand{\on}{\overline{\nabla}}
\newcommand{\pr}{\partial_r}
\newcommand{\vol}{{\rm vol}}
\newcommand{\Sc}{{\rm Sc}}
\newcommand{\Ric}{{\operatorname{Ric}}}
\begin{document}

\title[PMT for AF manifolds with isolated conical singularity]{Positive mass theorem for asymptotically flat manifolds with isolated conical singularities}

\author{Xianzhe Dai}
\address{
Department of Mathematics, 
University of Californai, Santa Barbara
CA93106, USA}
\email{dai@math.ucsb.edu}

\author{Yukai Sun}
\address{Key Laboratory of Pure and Applied Mathematics, 
School of Mathematical Sciences, Peking University, Beijing, 100871, P. R. China
}
\email{sunyukai@math.pku.edu.cn}

\author{Changliang Wang}
\address{
School of Mathematical Sciences and Institute for Advanced Study, Tongji University, Shanghai 200092, China}
\email{wangchl@tongji.edu.cn}

\date{}

\keywords{Positive mass theorem, Conical singularity, Conformal change, Blow up}

\begin{abstract}
We prove the positive mass theorem for asymptotical flat (AF for short) manifolds with finitely many isolated conical singularities. 
We do not impose the spin condition. Instead we use the conformal blow up technique which dates back to Schoen's final resolution of 
the Yamabe conjecture.
\end{abstract}

\maketitle

\tableofcontents

\section{Introduction}

The famous Positive Mass Theorem states that an asymptotically flat (AF for short; also called asymptotically Euclidean) smooth manifold with nonnegative scalar curvature must have nonnegative ADM mass, and the mass is zero iff the manifold is the Euclidean space. This result was first proven by R. Schoen and S.-T. Yau \cite{SY-PMT} for manifolds of dimension between $3$ and $7$ using minimal surface method (for the recent progress of Schoen and Yau's argument in higher dimension, see \cite{SY1}). E. Witten \cite{Witten-PET} proved the Positive Mass Theorem for spin manifolds of arbitrary dimension by using the Dirac operator.  

It is a natural problem to generalize the Positive Mass Theorem to singular Riemannian manifolds (smooth manifolds endowed with non-smooth metrics), partially motived by the investigation of weak notions of nonnegative scalar curvature, and the study of stability aspect of positive mass theorem. There has been a lot of interesting works in this problem, see e.g. \cite{Grant-Tassotti, JSZ2022, Lee-PAMS-2013, Lee-LeFloch, Li-Mantoulidis, DG2012, Miao2002,Shi-Tam-PJM-2018}. These deal with metric singularities  occurring on smooth manifolds, and are usually assumed to be continuous and have $W^{1, p}$-regularity for $ p \geq n$. 

It was observed in \cite[Section 4.2]{Bray-Jauregui-AJM-2013} (see also Proposition 2.3 in \cite{Shi-Tam-PJM-2018}) that the negative mass Schwarzchild metric can be viewed as an AF metric with a $r^{\frac{2}{3}}$-horn singularity, which is scalar flat but has negative mass. A model $r^b$-horn metric is given as
\begin{equation}
\overline{g}_b := dr^2 + r^{2b} g^N, \ \ \text{for some} \ \ b >0,
\end{equation}
on a product manifold $(0, 1) \times N$, where $g^N$ is a Riemannian metric on closed manifold $N$, and near a $r^b$-horn singularity the metric is asymptotic to the model $r^b$-horn metric as $r \to 0$. In particular, for $b=1$, it is a conical singularity. These singular metrics are not continuous at the singular point, which is not even a topological manifold point if the cross section $N$ is not a sphere. In contrast to the negative mass Schwarzchild metric, the authors in \cite{DSW-spin-23} proved Positive Mass Theorem for AF spin manifolds with isolated $r^b$-horn singularities for $b \geq 1$, and in particular for AF spin manifolds with isolated conical singularities. In \cite{Li-Mantoulidis}, Li and Mantoulidis proved Positive Mass Theorem for metrics with conical singularities along a codimension two submanifold (aka edge metrics) on smooth AF manifolds. 
In \cite{TV2023}, Ju and Viaclovsky proved a positive mass theorem for AF manifolds with isolated orbifold singularities.

Conical singularities also occur naturally in the study of the near horizon geometry of black holes. Motivated from this consideration, a positive mass theorem for conical singularity under the nonnegative Ricci curvature is established in \cite{Lucietti-21} (modulo some analytic details). 

In this paper, we prove Positive Mass Theorem for general AF manifolds with isolated conical singularity, i.e., without imposing the spin condition.
\begin{thm}\label{thm-main}
Let $(M^n, g)$ be an AF manifold with finitely many isolated conical singularity and $n \geq 3$. If the scalar curvature is nonnegative on the smooth part, then the ADM mass $m(g)$ is nonnegative. Furthermore, the mass $m(g) = 0$ if and only if $(M^n, g)$ is isometric to $(\R^n, g_{\R^n})$.
\end{thm}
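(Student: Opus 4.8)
The plan is to adapt Schoen's conformal blow-up argument (from his resolution of the Yamabe problem and the proof of the positive mass theorem in the non-spin case) to the conically singular setting. The overall strategy is to argue by contradiction: suppose $(M^n,g)$ has negative ADM mass. One first wants to conformally change $g$ to a metric of zero scalar curvature on the smooth part (solving $\Delta_g u - c_n \Sc_g u = 0$ with $u \to 1$ at infinity), which decreases the mass if $\Sc_g \not\equiv 0$, and, crucially, one must control the behavior of the conformal factor near the conical tips. The key point is that a conical singularity is "analytically one-sided" in a favorable direction when $b\geq 1$: the relevant weighted elliptic theory (on the cone $dr^2 + r^2 g^N$) has indicial roots that allow one to solve the linear equation with $u$ bounded and in fact with a good expansion near $r=0$, so the conformal metric $\tilde g = u^{4/(n-2)} g$ is still AF with an isolated conical singularity and has $\Sc_{\tilde g} = 0$ and $m(\tilde g) < 0$.

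The heart of the argument is then the blow-up. Assuming $\Sc_g \equiv 0$ and $m(g) < 0$, one constructs a Green's function $G$ for the conformal Laplacian $L_g = \Delta_g - c_n \Sc_g = \Delta_g$ with pole at a point $p$ in the smooth part and $G \to 0$ at infinity, and considers the blown-up metric $\hat g = G^{4/(n-2)} g$ on $M \setminus \{p\}$. This $\hat g$ is scalar-flat, has an AF end at $p$ (the new end created by the pole) whose ADM mass is, by the expansion $G = |x|^{2-n} + A + O(|x|^{1-n})$, a positive multiple of $A$; and the old AF end of $M$ becomes a cylindrical-type end. Schoen's trick is that one then runs a second argument — either a doubling/reflection or a direct minimal-hypersurface or $\mu$-bubble computation, or in the modern treatment an argument showing $\hat g$ must be flat — to conclude $A \geq 0$, and combining with a Bray–Lee–type or positivity-of-$A$-via-$m(g)$ relation forces $m(g) \geq 0$, a contradiction. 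In the conical setting the new feature is that $\hat g$ near the original conical tips still has conical singularities (the conformal factor $G$ is bounded and bounded away from zero there), so all the minimal-surface or bubble constructions must be carried out on a manifold with such singularities — one needs that the singular set, being of codimension $\geq 3$ (if $N$ is a sphere it is a manifold point; in general the link has dimension $n-1$ but the cone point has "codimension $n$"), does not interfere: minimizing hypersurfaces avoid it, or can be perturbed off it, by capacity/dimension reasons.

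Concretely, the steps I would carry out are: (1) set up the weighted Hölder/Sobolev analysis near a conical singularity and prove the conformal Laplacian is Fredholm with the indicial-root gap that permits solving $L_g u = 0$, $u\to 1$, with a controlled (polyhomogeneous) expansion $u = a_0 + o(1)$, $a_0 > 0$, at each tip; deduce the reduction to the scalar-flat case with $m<0$ preserved; (2) construct the Green's function $G$ for $L_g$ with the same near-tip regularity and the asymptotic expansion at infinity $G=|x|^{2-n}+A|x|^{2-n}\cdot 0 + \dots$, wait — rather $G = |x|^{2-n}(1 + A|x|^{2-n}+\dots)$ is not quite it; the standard expansion is $G(x) = |x|^{2-n} + \mathcal{A} + O(|x|^{1-n})$ with $\mathcal{A}$ a constant, and show that the mass of the blown-up end equals $-\mathcal{A}$ up to a positive constant while a Bray-type identity relates $\mathcal A$ to $m(g)$, so $m(g)<0 \iff \mathcal A >0 \iff m(\hat g_{\text{new end}})<0$; (3) on the scalar-flat manifold $(M\setminus\{p\}, \hat g)$, which has one AF end (at $p$) with negative mass and otherwise cylindrical ends and finitely many conical tips, derive a contradiction by the minimal-hypersurface / $\mu$-bubble / harmonic-function method of Schoen–Yau (dimension $3\le n \le 7$) or the Schoen–Yau higher-dimensional scheme, taking care that the relevant hypersurfaces can be taken disjoint from the conical singular set; and (4) analyze the equality case: if $m(g)=0$ then the conformal factor solving step (1) is $\equiv 1$ so $\Sc_g\equiv0$, the Green's function argument forces $\mathcal A = 0$ and rigidity in the blow-up forces $(M\setminus\{p\},\hat g)$, hence $(M,g)$, to be flat; a flat AF manifold with isolated conical singularities and $m=0$ must be $\R^n$ (the conical tips would contribute a nontrivial holonomy/curvature defect unless absent).

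I expect the main obstacle to be \emph{step (3) in the presence of the conical singularities}: Schoen's blow-up contradiction relies delicately on the asymptotics of scalar-flat AF metrics and on the existence and regularity of area-minimizing hypersurfaces (or $\mu$-bubbles) together with the stability inequality; one must ensure that introducing isolated conical singularities (which are genuine metric singularities, not merely coordinate singularities, and where the metric is not even $C^0$ when $N\neq S^{n-1}$) neither obstructs the existence/compactness of the minimizing objects nor forces them to pass through or accumulate at the tips, and that the second-variation argument still goes through. Handling this likely requires either a removable-singularity / capacity estimate showing minimizers stay in the smooth region (exploiting that a cone point is small in the appropriate ($n-1$-dimensional Hausdorff / $p$-capacity) sense), or a preliminary conformal smoothing that trades the conical tip for a controlled region of possibly-negative scalar curvature whose total is small, which would then have to be absorbed — this being exactly the kind of subtlety that, for $b\geq 1$, the indicial-root sign makes tractable but still technical. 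The secondary difficulty is keeping the mass monotone and finite through the two conformal changes while the near-tip expansions are only polyhomogeneous; this is where the $b\ge 1$ (conical) hypothesis, as opposed to general horns, is essential.
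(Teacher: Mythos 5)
The proposal does not follow the paper's route, and its key step has a genuine gap. You leave the conical tips in place, blow up the conformal Laplacian at a \emph{smooth} point $p$, and then propose to run the Schoen--Yau minimal-hypersurface/$\mu$-bubble scheme on the resulting singular, scalar-flat manifold, dismissing the tips because the singular set is ``small'' in a capacity/dimension sense. Smallness of the singular set cannot be the operative mechanism: the negative-mass Schwarzschild metric is scalar-flat, AF, and has a single isolated $r^{2/3}$-horn singularity --- a point, hence just as negligible in capacity and Hausdorff dimension as a conical tip --- yet its mass is negative. So any argument whose only input about the singularity is its smallness proves too much and must fail; the specific conical structure ($b=1$, i.e.\ the indicial roots of the cone Laplacian) has to enter quantitatively, and your outline never uses it at the decisive moment. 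The paper's way around this is to perform the conformal blow-up \emph{at the conical point itself}: one solves $\Delta_g u=0$ with $u\sim r^{2-n}$ at the tip and $u\to 1+A\rho^{2-n}$ at infinity (possible precisely because $2-n$ is an indicial root for the cone, via the two-weight Sobolev/Fredholm theory), so that $u_\delta^{4/(n-2)}g$ converts the tip into a complete asymptotically conical end while the AF end survives with mass $m(g)+4(n-2)\delta A$; one then cuts along a far-out, strictly mean-concave cross-section of that conical end and applies the positive mass theorem with mean-concave boundary (Hirsch--Miao), or a PMT with arbitrary ends, and lets $\delta\to 0$. No minimizing hypersurface ever interacts with the singularity, which is exactly the difficulty your step (3) would have to confront head-on and does not resolve.

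Two further gaps: in your step (2), blowing up at an interior smooth point of an already-AF manifold produces a second AF end whose mass is a multiple of the constant $\mathcal{A}$ in the Green's function expansion, but there is no general identity relating $\mathcal{A}$ to the mass $m(g)$ of the original end; the ``Bray-type identity'' you invoke to get $m(g)<0\Rightarrow \mathcal{A}>0$ is not supplied, so even granting step (3) the contradiction does not follow. And the equality case is not settled by the remark that a flat metric with conical tips and $m=0$ ``must be $\R^n$'': one must first prove Ricci flatness (the paper does this by compactly supported metric variations combined with the conformal equation and the already-proved nonnegativity), then show the cross-section $(N,g^N)$ is the round sphere --- in the paper via harmonic coordinates asymptotic to $x_i$ at infinity, a Bochner identity expressing $\omega_n m(g)=\sum_i\int_M|\nabla dy_i|^2$, the resulting parallelism of $dy_i$, the forced exponent $\nu_1=1$ giving $\lambda_1(N)=n-1$, and Obata's theorem --- and only then can one invoke a low-regularity ($C^0\cap W^{1,n}$) rigidity theorem to conclude $(M,g)\cong(\R^n,g_{\R^n})$. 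As written, your outline assumes away both the scalar-flat reduction's interaction with the tips and this cross-section rigidity.
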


Our result also holds for multiple AF ends, as we indicate later.
In 3-dimension, by estimating capacity and Willmore functional of the cross sections, as an application of mass-capacity inequalities in \cite{Miao2023}, Miao \cite{Miao-mass-capacity-2024} obtained the nonnegativity of the mass of an AF manifold with $r^b$-horn singularity with $b>\frac{2}{3}$, provided that the relative homology group $H_2(M\setminus B_{r_0}(o), \partial B_{r_0}(o)) = 0$, here $o$ is the singular point and $B_{r_0}(o)$ is the geodesic ball centered at $o$ with radius $r_0$.

The precise definition of AF manifold with conical singularity is given in \secref{sec-conic}. These singular manifolds are not complete. This is one of main differences from the classical Positive Mass Theorem on complete AF manifolds.  Some positive mass theorems on incomplete AF manifolds have been established in \cite{LLU-CVPDE-23,LUY-JDG}, provided that the scalar curvature on the AF end is nonnegative and satisfies some quantitative positive lower bound on a bounded domain near the AF infinity.

The non-negativity of the mass in \thmref{thm-main} is proved in \secref{sec-nonnegativity}. The basic strategy is to construct an AF manifold with boundary, and then apply the results about the positive mass theorem for manifolds with boundary, e.g. in \cite{Herzlich-CMP-97, Herzlich-02, Hirsch-Miao-20, Miao2023} and others. A straightforward way to construct an AF manifold with boundary is to remove a conical neighborhood of the singular point from an AF manifold with a conical singularity. This idea works well for $r^b$-horn singularity with $ b > 1$, as we have done in \cite{DSW-spin-23}. However, in the case of $b < 1$, the mean curvature of the boundary with respect to the normal pointing to AF end would be too large (relative to a certain negative power of the area of the boundary) to apply positive mass theorems for manifolds with boundary. As mentioned above, the negative mass Schwarzchild metric ($b = \frac{2}{3}$) gives a counter-example to the positive mass theorem with $r^b$-horn singularity for $b <1$. In the case of conical singularity ($b=1$), this simple construction may not give desired AF manifold with boundary either, since the required upper bound for the mean curvature seems not easy to be estimated.

Instead, in the case of conical singularity, before cutting off a part of the manifold, we first apply a conformal blow up technique to the AF metric with a conical singularity, following Ju and Viaclovsky's approach for orbifold singularity in \cite{TV2023}.  More precisely, we first conformally deform the AF metric with conical singularity by using a harmonic function, which has asymptotic order like Green's function (i.e. $r^{2-n}$) near the conical singularity, and is asymptotic to $1$ near the AF infinity. One may think the harmonic function as the Green function with its pole at the conical singularity. This conformal deformation flips over a cone, converting the conical singularity to the infinite end of the cone, and keeps the AF end up to a higher order perturbation. Then we remove the conical end, and obtain an AF manifold with boundary whose mean curvature is negative, and so we can apply Hirsch and Miao's results about positive mass theorem with boundary \cite{Hirsch-Miao-20} to prove the non-negativity of the mass. 

Note that by conformally blowing up the conical singularity, we obtain a Riemannian manifold with two ends, one is an AF end, and another is an asymptotically conical end, i.e. this manifold is an AF manifold with an additional complete end. Therefore, we can also apply results about the positive mass theorems on AF manifolds with arbitrary ends in \cite{LUY-JDG, Zhu-IMRN-23} (for dimensions between 3 and 7) and \cite{cecchini2021} (for spin manifolds) to prove the nonnegativity of the mass in \thmref{thm-main}.

In a related work \cite{AB2003}, K. Akutagawa and B. Botvinnik studied the Yamabe problem for cylindrical manifolds, and obtained positive mass theorem for certain AF manifold with two specific conical singularities, which is constructed by a conformal blow up of a cylinder. The conformal blow up technique dates back to Schoen's solution of Yamabe problem \cite{Schoen-Yamabe-problem}, and has been a very useful and powerful tool in the study of various problems related to scalar curvature. The Schwarzschild metric $g=(1+\frac{m}{r^{n-2}})^{\frac{4}{n-2}}g_{\mathbb{R}^{n}}$ on $\mathbb{R}^{n}\backslash \{0\}$, for $m>0$, can be viewed as the conformal blowup of the origin by the Green's function $1+\frac{m}{r^{n-2}}$, which is asymptotic to $1$ as $r\to \infty$ and has the asymptotical order $r^{2-n}$ as $r\to 0$.    

The key in our argument is to find a suitable harmonic function. In contrast to the case of orbifold singularity, we need to solve for a harmonic function with certain prescribed asymptotic behavior on the AF manifold with conical singularity, which may not be lifted to a smooth manifold. To deal with the conical singularity, we employ weighted Sobolev spaces introduced in \cite{DSW-spin-23}, which have two weights, one for the conical singularity and the other for the AF infinity. 

The rigidity result in \thmref{thm-main} is proved in \secref{sec-rigidity}. We first prove that the mass $m(g) = 0$ and $\Sc_g \geq 0$ imply the AF metric with conical singularity must be Ricci flat, by deforming the metric in a compact domain away from conical singularity, and following a conformal change. In the case of orbifold singularity, as shown in \cite{TV2023}, by applying the Bishop-Gromov's volume inequality for orbifolds, the Euclidean volume growth at AF infinity then implies that the metric has to be flat. It is more delicate and involved in the case of conical singularity, since the Bishop-Gromov's volume inequality may not hold for AF manifolds with conical singularity.  We need to get the rigidity for the cross section of the cone so that we can get more information about the regularity of the metric. For that, we solve harmonic coordinates on AF manifolds with conical singularity, and study its asymptotic behavior near the conical singularity, which can be linked to the first nonzero eigenvalue on the cross section. Then we could apply Obata's rigidity theorem to obtain that the cross section must be isometric to the standard sphere. This then implies that the metric is continuous and has $W^{1, n}$-regularity, and so the rigidity result in \cite{JSZ2022} can be applied. 

The paper is organized as follows. In Section \ref{sec-conic}, we introduce asymptotically flat (AF) manifolds with isolated conical singularities and their ADM mass and collect some basic facts on cones. Section \ref{sect-analysis} is devoted to the analysis on AF manifolds with isolated conical singularities. We review the weighted Sobolev spaces introduced in \cite{DSW-spin-23} and discuss the corresponding elliptic estimates for the Laplace operators, their Fredholm properties, and surjectivity. This is used in Section \ref{sec-nonnegativity} and \ref{sec-rigidity} to obtain asymptotically harmonic function with certain prescribed asymptotic behavior at the conical singularity or AF infinity. In Section \ref{sec-nonnegativity} and \ref{sec-rigidity} we prove the positive mass theorem for conical singularity (Theorem \ref{thm-main}). 

{\em Acknowledgement}: The authors are grateful to Jeff Viaclovsky for suggesting that something like this could work. Xianzhe Dai is partially supported by the Simons Foundation. Yukai Sun is partially funded by the National Key R\&D Program of China Grant 2020YFA0712800.  Changliang Wang is partially supported by the Fundamental Research Funds for the Central Universities and Shanghai Pilot Program for Basic Research.

\section{AF manifolds with isolated conical singularities} \label{sec-conic}
In this section, we give the precise definition of what we call asymptotically flat (AF for short) manifolds with finitely many isolated conical singularities, and the definition of the mass (at infinity) for these singular manifolds. 
\begin{defn}\label{defn-conic-mfld}
{\rm
We say $(M^n_0, g, d, o)$ is a compact Riemannian manifold with smooth boundary and a single conical singularity at $o \in M_0 \setminus \partial M_0$, if 
\begin{enumerate}[(\romannumeral1)]
\item $d$ is a metric on $M_0$ and $(M_0, d)$ is a compact metric space with smooth boundary,
\item $g$ is a smooth Riemannian metric on the regular part $ M_0 \setminus \{ o \}$, 
         $d$ is the induced metric by the Riemannian metric $g$ on $M_0 \setminus \{ o \}$ ,
\item there exists a neighborhood $U_o$ of $o$ in $M \setminus \partial M$, such that 
         $U_o \setminus \{ o \} \simeq (0, 1) \times N$ 
          for a smooth compact manifold $N$, 
           and on $U_o \setminus \{ o \}$ the metric $g = \g + h $, where
         \[
         \g = dr^2 + r^2 g^N,
         \]
         $g^N$ is a smooth Riemannian metric on $N$, $r$ is a coordinate on $(0, 1)$, $r=0$ corresponding the singular point $o$, 
         and $h$ satisfies 
         \begin{equation}\label{eqn-cone-metric-asymptotic}
         |\on^k h|_{\g} = O(r^{ \alpha - k }),  \ \ \text{as} \ \ r \rightarrow 0,
         \end{equation}
         for some $\alpha >0$ and $k = 0, 1 $ and $2$, where $\on$ is the Levi-Civita connection of $\g$.
\end{enumerate} 
}
\end{defn}

\begin{defn}\label{defn-AF-conic-mfld}
{\rm
We say $(M^n, g, o)$ is an asymptotically flat manifold with a single isolated conical singularity at $o$, if $M^n = M_0 \cup M_\infty$ satisfies
\begin{enumerate}[(\romannumeral1)]
\item $( M_0, g|_{M_0 \setminus\{o\}}, o)$ is a compact Riemannian manifold with smooth boundary and a single conical singularity at $o$ 
         defined as in Definition \ref{defn-conic-mfld}, 
\item  $M_\infty$ is diffeomorphic to  $ \R^n \setminus B_{R}(0) $ for some $R > 0$, and under this diffeomorphism the smooth Riemannian metric $g$ on $M_\infty$ satisfies
         \[
          g = g_{\R^n} + O(\rho^{-\tau}), \ \ |(\nabla^{g_{\R^n}})^i g|_{g_{\R^n}} = O(\rho^{-\tau - i}),  \ \  \text{as} \ \ \rho \rightarrow +\infty,
         \]
         for $i = 1, 2$ and $3$,
         where $\tau > \frac{n-2}{2}$ is the asymptotical order, $\nabla^{g_{\R^n}}$ is the Levi-Civita connection of the Euclidean metric $g_{\R^n}$, and $\rho$ is the Euclidean 
         distance to a base point.
\end{enumerate}
}
\end{defn}

\begin{rmrk}
{\rm
For the sake of simplicity of notations, in Definition \ref{defn-AF-conic-mfld} we only defined AF manifolds with a single conical singularity and one AF end, and we will only focus on this case in this paper. AF manifolds with finitely many isolated conical singularities and finitely many AF ends can be defined similarly, and all results in this paper can be easily extended to the case of finitely many isolated conical singularities and finitely many AF ends.
}
\end{rmrk}

Now we give the definition of the ADM mass of such Riemannian manifold.
\begin{defn}\label{def-mass}
{\rm
Let $(M^n, g, o)$ be an asymptotically flat manifold with a single isolated conical singularity at $o$. The mass $m(g)$ is defined as:
 \[m(g)=\lim_{R\to\infty}\frac{1}{\omega_{n}}\int_{S_{R}}(\partial_{i} g_{ji}-\partial_{j}g_{ii})\ast dx^{j},\]
 where $\{\frac{\partial}{\partial x^{i}}\}$ is an orthonormal basis of $g_{\mathbb{R}^{n}}$ and the $\ast$ operator is the Hodge star operator on the Euclidean space, the indices $i,j$ run over $M^n$ and $S_{R}$ is the sphere of radius $R$ on $\mathbb{R}^{n}$ and $\omega_{n}$ is the volume of the unit sphere in $\mathbb{R}^{n}$.
 }
\end{defn}

Finally we collect some basic facts for our model cone.
Let $(N^{n-1}, g^N)$ be a compact Riemannian manifold, and $(C(N), \g) = (\R_+ \times N, dr^2 + r^2 g^N)$ be the Riemannian cone over $(N^{n-1}, g^N)$, where $r$ is the coordinate on $\R_+$. We call $(N, g^N)$ the cross section of the cone. Let $\{ e_1, \cdots, e_{n-1} \}$ be a local orthonormal frame of $TN$ with respect to $g^N$, $\e_i = \frac{1}{r} e_i$ for all $i=1, \cdots, n-1$, and $\partial_r = \frac{\partial}{\partial r}$. Then $\{\e_1, \cdots, \e_{n-1}, \partial_r\}$ is a local orthonormal frame of $TC(N)$ with respect to $\g$. Let $\on$ denote the Levi-Civita connection of $\g$. By Koszul's formula, one easily obtains
\be\label{eqn-connection-cone}
\begin{cases}
 & \on_{\e_i} \pr = \frac{1}{r} \e_i, \\
 & \on_{\pr} \e_i = [ \pr, \e_i ] + \on_{\e_i} \pr = - \frac{1}{r} \e_i + \frac{1}{r} \e_i =0, \\
 & \on_{\pr} \pr = 0, \\
 & \on_{\e_i} \e_j = \nabla^{g^N}_{\e_i} \e_j  - \frac{1}{r} \delta_{ij} \pr,
\end{cases}
\ \ \forall 1 \leq i, j \leq n-1.
\ee
Let $\overline{R}$ denote the Riemann curvature tensor of $\g$, and $R^N$ denote the Riemann curvature tensor of $g^N$ on $N$. In the local frame $\{\e_1, \cdots, \e_{n-1}, \partial_r\}$, the only non-vanishing components of $\overline{R}$ are
\be
\overline{R}_{ijkl} = r^2 \left( R^N_{ijkl} + g^N_{ik} g^N_{jl} - g^N_{il} g^N_{jk} \right), \ \ 1 \leq i, j, k, l \leq n-1.
\ee
In particular,
\be\label{eqn-curvature-cone}
\overline{R} (\pr, X, Y, Z) =0, \ \ \forall X, Y, Z \in \Gamma(TC(N)),
\ee
and so
\begin{eqnarray*}
  \Ric_{\g}(X,Y) &=& \Ric_{g^{N}}(X,Y)-(n-2)g^{N}(X,Y),\ \ \forall X, Y \in \Gamma(TN);\\
  \Ric_{\g}(\pr,\cdot) &=& 0,
\end{eqnarray*}
and
\be\label{eqn-scalar-curvature-cone}
\Sc_{\g} =\frac{\Sc_{g^{N}}-(n-1)(n-2)}{r^2}.
\ee
The Laplace operator $\Delta_{\g}$ on $(C(N), \g)$ is given by
\begin{equation}\label{eqn-Delta-DeltaN}
\Delta_{\g} =\partial_{r}^2 + \frac{n-1}{r}\partial_{r}+\frac{1}{r^2}\Delta_{g^{N}},
\end{equation}
where $\Delta_{g^N}$ is Laplace operator on the cross section $(N, g^N)$.
\section{Analysis on AF manifolds with isolated conical singularities}\label{sect-analysis}

\subsection{Weighted Sobolev spaces}\label{subsect-weighted-Sobolev-space}
Let $(M^n, g, o)$ be a AF manifold with a single conical singularity at $o$ as defined in Definition \ref{defn-AF-conic-mfld}. Choose three cut-off functions $0 \leq \chi_1, \chi_2, \chi_3 \leq 1$ satisfying:
\be
\chi_1 (x) =
\begin{cases}
1, &  {\rm dist}(x, o) < \epsilon, \\
0, & {\rm dist}(x, o) > 2 \epsilon,
\end{cases}
\ee
\be
\chi_2(x) =
\begin{cases}
1, & {\rm dist}(x, o) > 2R, \\
0, & {\rm dist}(x, o) < R,
\end{cases}
\ee
and
\be
\chi_3 = 1 - \chi_1 - \chi_2,
\ee
where $\epsilon >0$ is chosen sufficiently small such that the ball $B_{2\epsilon}(o)$, centered at singular point $o$ with radius $2\epsilon$, is contained in the asymptotically conical neighborhood $U_o$ in Definition \ref{defn-conic-mfld}, and $R>0$ is chosen sufficiently large such that $M \setminus B_{R}(o) \subset M_\infty$.

For each $1 \leq p < +\infty, k \in \mathbb{N} $ and $\delta, \beta \in \R$,  the {\em weighted Sobolev space} $W^{k, p}_{\delta, \beta}(M)$ is defined to be the completion of $C^\infty_0(M\setminus \{o\})$ with respect to the {\em  weighted Sobolev norm } given by
\be
\| u \|^p_{W^{k, p}_{\delta, \beta}(M)} :=  \int_{M} \sum^{k}_{i=0} \left( r^{- p(\delta - i) -n} |\nabla^i u |^p \chi_1 + \rho^{ - p( \beta - i) -n} |\nabla^i u |^p \chi_2 + |\nabla^i u|^p \chi_3 \right) d\vol_{g} ,
\ee
where $r$ is the radial coordinate on the conical neighborhood $U_o$ in Definition \ref{defn-conic-mfld} and $\rho$ is the Euclidean distance function to a base point on $M_\infty$ in Definition \ref{defn-AF-conic-mfld}, and $|\nabla^i u|$ is the norm of $i^{th}$ covariant derivative of $u$ with respect to $g$.

Note that by the definition of the weighted Sobolev norms, we clearly have
\be
\begin{cases} \delta^\prime \ge \delta, \cr \beta^\prime \leq \beta, \end{cases}
\Rightarrow \ \  W^{k, p}_{\delta^\prime, \beta^\prime} \subset W^{k, p}_{\delta, \beta},
\ee
and 
\begin{equation}\label{eqn-polynomial-weighted-Sobolev-condition}
r^{\nu} \chi_1  \in W^{k, p}_{\delta, \beta}(M)  \Longleftrightarrow \nu > \delta, 
\quad 
\rho^{\mu} \chi_2 \in W^{k, p}_{\delta, \beta}(M)  \Longleftrightarrow \mu < \beta.
\end{equation}

\subsection{Elliptic estimate for the Laplace operator}

Throughout the paper, we always use $\lambda_j$ to denote the eigenvalues of the Laplace operator $\Delta_{g^N}$ on the cross section of the model cone $(C(N), \overline{g})$, and $E_j$ to denote the space of the corresponding eigenfunctions. We set
\begin{equation}\label{eqn-nu-defn}
\nu_j^{\pm} := \frac{-(n-2)\pm\sqrt{(n-2)^2+4\lambda_{j}}}{2}.
\end{equation}

\begin{defn}\label{defn-critical-cone}
{\rm
We say $\delta \in \R$ is {\em critical at conical point } if $\delta = \nu_j^{+}$ or $\delta = \nu_j^{-}$ for some $j \in \mathbb{Z}_{\geq 0}$, where $\nu^{\pm}_j$ is defined as in (\ref{eqn-nu-defn}).
}
\end{defn}

We view the Euclidean space $(\R^n, g_{\R^n})$ as a cone over the round sphere $\mathbb{S}^{n-1}$ with constant sectional curvature $1$, whose Laplace operator has eigenvalues: $j(n-2+j)$, $j \in \N$. Replacing $\lambda_j$ by these eigenvalues on round sphere, we make following definition.

\begin{defn}\label{defn-critical-infinity}
{\rm
We  say $\beta \in \R$ is {\em critical at infinity } if $\beta = k $ or $\beta = 2-n-k$ for some $ k \in \Z_{\geq 0} $.
}
\end{defn}

\begin{defn}\label{defn-critical-index}
{\rm
We say $(\delta, \beta) \in \R^2$ is {\em critical} if $\delta$ is critical at conical point or $\beta$ is critical at infinity.
}
\end{defn}

By using scaling technique, the usual interior elliptic estimates, and the asymptotic control of metric $g$ near conical point in Definition $\ref{defn-conic-mfld}$ and near infinity in Definition $\ref{defn-AF-conic-mfld}$, one can obtain the following weighted elliptic estimate. This is treated in detail in \cite{DSW-spin-23} for the Dirac operator and carries over except minor modifications.
\begin{prop}\label{prop-weight-elliptic-estimate}
Let $(M^n, g, o)$ be an AF manifold with a single conical singularity at $o$.
For any $1 \leq p < +\infty$, and $\delta, \beta \in \R$, if $u \in L^{p}_{\delta, \beta}(M)$, and $\Delta u \in L^{p}_{\delta-2, \beta-2}(M)$, then
\be
\|u \|_{W^{2, p}_{\delta, \beta}(M)} \leq C \left( \|\Delta u \|_{L^{p}_{\delta-2, \beta-2}(M)} + \|u\|_{L^{p}_{\delta, \beta}(M)} \right)
\ee
holds for some constant $C = C(g, n, p)$ independent of $u$.
\end{prop}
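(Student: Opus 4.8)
The plan is to prove the weighted elliptic estimate of Proposition~\ref{prop-weight-elliptic-estimate} by a standard patching argument, treating the three regions --- the conical neighborhood $\{\chi_1 \neq 0\}$, the AF end $\{\chi_2 \neq 0\}$, and the compact core $\{\chi_3 \neq 0\}$ --- separately and then combining. On the compact core, since $g$ is smooth and the region has compact closure in $M \setminus \{o\}$, the desired bound is just the classical interior $L^p$ elliptic estimate (Calder\'on--Zygmund theory) for $\Delta = \Delta_g$, with no weights involved. The substance is therefore in the two ends, and there the key tool is a \emph{dyadic rescaling} (scaling) argument: one decomposes each end into annular shells $A_k = \{2^{-k-1} \leq r \leq 2^{-k+1}\}$ (near the cone) or $A_k = \{2^{k-1} \leq \rho \leq 2^{k+1}\}$ (near infinity), rescales each shell to unit size via $r \mapsto 2^k r$ (resp.\ $\rho \mapsto 2^{-k}\rho$), applies the fixed interior estimate on a fixed-size annulus for the rescaled operator, and then translates the rescaled inequality back. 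The weight factors $r^{-p(\delta-i)-n}$ and $\rho^{-p(\beta-i)-n}$ are precisely the ones that make the rescaled $L^p$ norms on each shell scale-invariant, so that summing over $k$ reproduces the weighted norm on the whole end.

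More concretely, near the conical point write $g = \g + h$ with $\g = dr^2 + r^2 g^N$ and $|\on^j h|_{\g} = O(r^{\alpha - j})$ for $j = 0,1,2$. In the rescaled coordinates on a unit annulus, the model part $\g$ becomes (an isometric copy of) a fixed piece of the exact cone metric, independent of $k$, and the perturbation $h$ together with its derivatives is $O(2^{-k\alpha}) \to 0$; hence for $k$ large the rescaled metrics converge in $C^2$ to the model metric, the rescaled operators $\Delta_g$ converge to $\Delta_{\g}$ with uniformly elliptic coefficients of uniformly bounded $C^1$ norm, and one gets the interior estimate on each shell with a \emph{uniform} constant. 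The same scheme applies at infinity using the decay $g = g_{\R^n} + O(\rho^{-\tau})$, $|(\nabla^{g_{\R^n}})^i g| = O(\rho^{-\tau-i})$: the rescaled metrics converge in $C^2$ to the flat Euclidean metric. Summing the shell estimates (there is a fixed finite overlap multiplicity between consecutive shells, which only costs a dimensional constant) yields
\[
\|u\|_{W^{2,p}_{\delta,\beta}(\text{end})} \leq C\left(\|\Delta u\|_{L^p_{\delta-2,\beta-2}(\text{end})} + \|u\|_{L^p_{\delta,\beta}(\text{end})}\right),
\]
and adding the compact-core estimate, together with the cut-off error terms (derivatives of $\chi_i$ are supported in fixed compact annuli and are absorbed into the $\|u\|_{L^p_{\delta,\beta}}$ term on the right), gives the claimed global inequality. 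The constant depends only on $g$, $n$, $p$.

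The one point requiring a little care --- and the main potential obstacle --- is the \emph{uniformity of the interior constant across all shells}, i.e.\ making sure that the dependence of the Calder\'on--Zygmund constant on the coefficients is controlled only through their $C^1$ (or $C^0$ plus ellipticity) norms, which are uniformly bounded after rescaling. This is exactly where the three-derivative control in Definition~\ref{defn-conic-mfld} and Definition~\ref{defn-AF-conic-mfld} (the $k=2$, resp.\ $i=3$, bounds on $h$ and on $g - g_{\R^n}$) is used: it guarantees $C^1$-control of the rescaled second-order coefficients, hence a uniform $W^{2,p}$ estimate. For the finitely many small shells near $r = 0$ or $\rho = R$ where the perturbation is not yet small, one simply notes there are only boundedly many of them and each carries a fixed (possibly larger) constant, so the supremum over all shells is still finite. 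As the excerpt notes, all of this is carried out in detail for the Dirac operator in \cite{DSW-spin-23}, and the argument for the scalar Laplacian is identical except for routine modifications, so we only indicate the structure here.
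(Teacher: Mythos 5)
Your proposal is correct and follows essentially the same route as the paper: the authors likewise obtain the estimate by dyadic rescaling to unit annuli, uniform interior $L^p$ elliptic estimates justified by the asymptotic control of $g$ near the cone tip and at infinity, and summation over shells plus the compact part, deferring the details to the analogous argument for the Dirac operator in \cite{DSW-spin-23}. No substantive difference from the paper's (sketched) proof.
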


To obtain Fredholm property for the Laplace operator, the following refined weighted elliptic estimate is crucial.
\begin{prop}\label{prop-refined-weighted-elliptic-estimate}
Let $(M^n, g, o)$ be an AF manifold with a single conical singularity at $o$.
If $(\delta, \beta) \in \R^2$ is not critical as in Definition $\ref{defn-critical-index}$, there exists a constant $C= C(g, n)$ and a compact set $B \subset M \setminus \{o\}$ such that for any $u \in L^2_{\delta, \beta}(M)$ with $\Delta u \in L^{2}_{\delta-2, \beta-2}(M)$,
\be
\|u\|_{W^{2, 2}_{\delta, \beta}(M)} \leq C \left( \|\Delta u \|_{L^{ 2}_{\delta-2, \beta-2}(M)} + \|u\|_{L^{2}(B)} \right).
\ee
\end{prop}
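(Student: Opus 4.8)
The plan is to argue by contradiction, exploiting the scale-invariant structure of the weighted norms at both the conical point $o$ and the AF infinity, together with the fact that the model operator (the Laplacian on the exact cone $(C(N),\overline g)$ near $o$, and the flat Laplacian on $\mathbb R^n\setminus B_R$ near infinity) has \emph{no} $L^2_{\delta,\beta}$-kernel precisely because $(\delta,\beta)$ is noncritical. Concretely, suppose the inequality fails: there exist $u_k\in L^2_{\delta,\beta}(M)$ with $\Delta u_k\in L^2_{\delta-2,\beta-2}(M)$, normalized so that $\|u_k\|_{W^{2,2}_{\delta,\beta}(M)}=1$, yet $\|\Delta u_k\|_{L^2_{\delta-2,\beta-2}(M)}+\|u_k\|_{L^2(B)}\to 0$ for every fixed compact $B\subset M\setminus\{o\}$ (one can take an exhausting sequence of such $B$ and diagonalize). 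By Proposition 3.6 (the coarse weighted estimate) the sequence is bounded in $W^{2,2}_{\delta,\beta}$, so after passing to a subsequence $u_k\rightharpoonup u$ weakly in $W^{2,2}_{\delta,\beta}$, and by Rellich on compact subsets of $M\setminus\{o\}$ we get strong $L^2_{\rm loc}$ convergence; the limit $u$ satisfies $\Delta u=0$ and $\|u\|_{L^2(B)}=0$ for all compact $B$, hence $u\equiv 0$. Thus $u_k\to 0$ strongly in $L^2$ on every compact piece of $M\setminus\{o\}$.

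The core of the argument is then to show that no $W^{2,2}_{\delta,\beta}$-mass can concentrate at the two ends, which upgrades $u_k\to 0$ in $L^2_{\rm loc}$ to $\|u_k\|_{W^{2,2}_{\delta,\beta}}\to 0$, contradicting the normalization. I would handle the two ends separately. Near the conical point, I would fix a small $\varepsilon$ and, using the asymptotic $g=\overline g+h$ with $|\overline\nabla^j h|_{\overline g}=O(r^{\alpha-j})$ from Definition 2.1(iii), compare $\Delta_g$ with the exact cone Laplacian $\Delta_{\overline g}=\partial_r^2+\tfrac{n-1}{r}\partial_r+\tfrac1{r^2}\Delta_{g^N}$ in (2.13), treating the difference $(\Delta_g-\Delta_{\overline g})u$ as a term that is $O(r^\alpha)$ better in weight and hence absorbable for $\varepsilon$ small. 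For the model operator, expand $u$ in eigenfunctions of $\Delta_{g^N}$ on each dyadic annulus $\{2^{-m-1}\le r\le 2^{-m}\}$: on the $\lambda_j$-eigencomponent the ODE $\partial_r^2+\tfrac{n-1}{r}\partial_r-\tfrac{\lambda_j}{r^2}$ has indicial roots exactly $\nu_j^{\pm}$ from (3.1), so a weighted Hardy/Caccioppoli-type estimate on each annulus gives
\begin{equation*}
\|u\|_{W^{2,2}_{\delta,\beta}(\{r<\varepsilon\})}\le C\bigl(\|\Delta u\|_{L^2_{\delta-2,\beta-2}}+\|u\|_{L^2(\{r\sim\varepsilon\})}\bigr),
\end{equation*}
where the constant stays uniform in $m$ \emph{precisely because} $\delta\neq\nu_j^{\pm}$ for all $j$ (the noncriticality at the conical point). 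Near infinity one runs the identical scheme with $\Delta_{g_{\mathbb R^n}}$ as the model, spherical harmonics of degree $k$ on $\mathbb S^{n-1}$, indicial roots $k$ and $2-n-k$, and the asymptotic decay $g=g_{\mathbb R^n}+O(\rho^{-\tau})$ from Definition 2.2(ii); noncriticality at infinity ($\beta\neq k,\,2-n-k$) again keeps the annular constants uniform. Combining the two end estimates with the interior elliptic estimate gives $\|u_k\|_{W^{2,2}_{\delta,\beta}}\le C(\|\Delta u_k\|_{L^2_{\delta-2,\beta-2}}+\|u_k\|_{L^2(B)})$ for a \emph{fixed} large compact $B$, and the right side tends to $0$, the contradiction.

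The main obstacle, and where the noncriticality hypothesis does all its work, is establishing the \emph{uniform-in-scale} annular estimates near each end — equivalently, proving the model-operator estimate $\|u\|_{W^{2,2}_{\delta}(C(N))}\le C\|\Delta_{\overline g}u\|_{L^2_{\delta-2}(C(N))}$ on the exact cone for noncritical $\delta$. The point is that on the $\lambda_j$-component a solution of $\Delta_{\overline g}u=0$ behaves like $r^{\nu_j^{+}}$ or $r^{\nu_j^{-}}$, so the inhomogeneous equation is solvable with the gain of two weights only when $\delta$ avoids these indicial roots; at a critical $\delta$ a resonant $\log r$ term appears and the estimate genuinely fails. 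Technically this is a Mellin-transform (or separation-of-variables plus one-dimensional weighted ODE) computation: after the substitution $r=e^{-t}$ the cone Laplacian becomes a constant-coefficient operator in $t$ on the cylinder $\mathbb R\times N$, the weight $r^{-\delta}$ becomes an exponential weight $e^{\delta t}$, and the estimate reduces to invertibility of the symbol $\xi^2+(\delta+i\xi)\cdot(\text{lower order})+\Delta_{g^N}$ along the line $\mathrm{Im}\,\xi=\delta$, which holds iff $\delta$ is not an indicial root. The perturbation terms $h$ near $o$ and $O(\rho^{-\tau})$ near infinity are then absorbed by a standard smallness-at-the-end argument. As the excerpt notes, the Dirac-operator version of exactly this reasoning is carried out in \cite{DSW-spin-23}, and it transfers to $\Delta$ with only cosmetic changes.
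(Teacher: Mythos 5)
Your proposal is correct and follows essentially the same route as the paper: the paper reduces Proposition \ref{prop-refined-weighted-elliptic-estimate} to the coarse estimate (Proposition \ref{prop-weight-elliptic-estimate}) plus the $L^2$-level bound of Lemma \ref{lem-refined-weighted-elliptic-estimate}, which is proved by exactly your core mechanism --- comparison with the model cone Laplacian near $o$ and the Euclidean Laplacian near infinity, spectral decomposition in eigenfunctions of $\Delta_{g^N}$ (spherical harmonics at infinity), the ODE/Mellin analysis in which noncriticality of $(\delta,\beta)$ excludes the indicial roots $\nu_j^{\pm}$ (resp.\ $k$, $2-n-k$), and absorption of the $O(r^{\alpha})$ and $O(\rho^{-\tau})$ perturbations by working sufficiently close to each end --- with details deferred to \cite{Minerbe-CMP} and the Dirac-operator analogue in \cite{DSW-spin-23}. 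Your contradiction/compactness wrapper is harmless but superfluous, since, as you observe yourself, the two end estimates combined with the interior elliptic estimate already yield the stated inequality directly with a fixed compact set $B$.
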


Proposition $\ref{prop-refined-weighted-elliptic-estimate}$ is an immediate consequence of Proposition $\ref{prop-weight-elliptic-estimate}$ and the following Lemma:

\begin{lem}\label{lem-refined-weighted-elliptic-estimate}
Let $(M^n, g, o)$ be an AF manifold with a single conical singularity at $o$.
If $(\delta, \beta) \in \R^2$ is not critical as in Definition $\ref{defn-critical-index}$, there exists a constant $C$ and a compact set $B \subset M \setminus \{o\}$ such that for any $u \in L^2_{\delta, \beta}(M)$ with $\Delta u \in L^{2}_{\delta-2, \beta-2}(M)$,
\be
\|u\|_{L^{2}_{\delta, \beta}(M)} \leq C \left( \|\Delta u \|_{L^{2}_{\delta-2, \beta-2}(M)} + \|u\|_{L^{2}(B)} \right).
\ee
\end{lem}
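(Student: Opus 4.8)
The plan is to argue by contradiction, using \propref{prop-weight-elliptic-estimate} together with an analysis of the model operators on the two ends (a cone over $(N,g^N)$ near $o$, and Euclidean space near AF infinity). Exhaust $M\setminus\{o\}$ by compact sets $B_1\subset B_2\subset\cdots$. If the asserted inequality fails for every compact set, then in particular it fails with $C=k$ and $B=B_k$, so there exist $u_k\in L^2_{\delta,\beta}(M)$ with $\Delta u_k\in L^2_{\delta-2,\beta-2}(M)$ such that $\|u_k\|_{L^2_{\delta,\beta}(M)}=1$ while
\[
\|\Delta u_k\|_{L^2_{\delta-2,\beta-2}(M)}+\|u_k\|_{L^2(B_k)}\;\longrightarrow\;0 .
\]
By \propref{prop-weight-elliptic-estimate} the sequence $\{u_k\}$ is bounded in $W^{2,2}_{\delta,\beta}(M)$. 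On any fixed compact subset of $M\setminus\{o\}$ the weighted norm is comparable to the unweighted one, and since the $B_k$ exhaust $M\setminus\{o\}$, it suffices to estimate the weighted norm of $u_k$ on a small punctured neighborhood $\{r<\epsilon_0\}$ of $o$ and on a neighborhood $\{\rho>R_0\}$ of infinity by $\|\Delta u_k\|_{L^2_{\delta-2,\beta-2}(M)}$ plus the $L^2$ norm of $u_k$ over two fixed collars; adding the (harmless) middle contribution then forces $\|u_k\|_{L^2_{\delta,\beta}(M)}\to 0$, the desired contradiction.

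The estimate near $o$ is where the non-criticality of $\delta$ enters. Writing $g=\g+h$ on $U_o\setminus\{o\}\simeq(0,1)\times N$ as in \defref{defn-conic-mfld}, and expanding a function as $u=\sum_j u_j(r)\phi_j$ in an $L^2(N)$-orthonormal basis of eigenfunctions $\phi_j$ of $\Delta_{g^N}$ with eigenvalue $\lambda_j$, the equation $\Delta_{\g}u=f$ decouples, by \eqref{eqn-Delta-DeltaN}, into the Euler-type ODEs $u_j''+\tfrac{n-1}{r}u_j'-\tfrac{\lambda_j}{r^2}u_j=f_j$, whose homogeneous solutions are $r^{\nu_j^{\pm}}$ with $\nu_j^{\pm}$ as in \eqref{eqn-nu-defn}. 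A one-dimensional weighted $L^2$ estimate for this ODE on $(0,\epsilon_0)$ — with a controlled contribution from $r=\epsilon_0$ — holds with a constant depending only on $\mathrm{dist}\big(\delta,\{\nu_j^{\pm}\}\big)$; since $\delta$ is not critical at the conical point (\defref{defn-critical-index}) and the $\nu_j^{\pm}$ accumulate only at $\pm\infty$, this distance is positive and the constants are uniform in $j$. Summing in $j$ yields the model estimate
\[
\|u\|_{W^{2,2}_\delta(\{r<\epsilon_0\})}\le C\Big(\|\Delta_{\g}u\|_{L^2_{\delta-2}(\{r<\epsilon_0\})}+\|u\|_{L^2(\{\epsilon_0/2<r<\epsilon_0\})}\Big),
\]
with $C$ independent of $\epsilon_0$ by the homogeneity of the cone and of the weighted norms. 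Finally $\Delta_g-\Delta_{\g}$ is a second-order perturbation whose coefficients involve $h$ and its first two $\on$-derivatives, which by \eqref{eqn-cone-metric-asymptotic} are $O(r^{\alpha})$, $O(r^{\alpha-1})$, $O(r^{\alpha-2})$ respectively; hence $\|(\Delta_g-\Delta_{\g})u\|_{L^2_{\delta-2}(\{r<\epsilon_0\})}\le C'\epsilon_0^{\alpha}\|u\|_{W^{2,2}_\delta(\{r<\epsilon_0\})}$. Choosing $\epsilon_0$ small enough that $CC'\epsilon_0^{\alpha}\le\tfrac12$ and absorbing gives the wanted estimate near $o$ for the true metric $g$.

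Near AF infinity the same scheme applies with $(\mathbb{S}^{n-1},\text{round})$ in place of $(N,g^N)$: decomposing into spherical harmonics produces Euler ODEs with indicial roots $k$ and $2-n-k$, $k\in\Z_{\ge 0}$, and since $\beta$ is not critical at infinity one obtains the analogous model estimate on $\{\rho>R_0\}$ for $\Delta_{g_{\R^n}}$, the perturbation $g-g_{\R^n}=O(\rho^{-\tau})$ with $\tau>0$ being absorbed by taking $R_0$ large (this is the classical Bartnik estimate). Combining the two end estimates with the comparison of weighted and unweighted norms on the fixed middle region yields a fixed compact set $B\subset M\setminus\{o\}$ and a constant $C$ with $\|u_k\|_{L^2_{\delta,\beta}(M)}\le C\big(\|\Delta u_k\|_{L^2_{\delta-2,\beta-2}(M)}+\|u_k\|_{L^2(B)}\big)$; since $B\subset B_k$ for large $k$, the right-hand side tends to $0$ while the left-hand side is $1$, a contradiction. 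I expect the main obstacle to be establishing the uniform-in-$j$ solvability of the decoupled Euler ODEs on the conical end — this is precisely where the hypothesis that $(\delta,\beta)$ be non-critical is used, and it requires care to make the constants uniform over the infinitely many cross-sectional modes; once these indicial estimates are in place the rest is bookkeeping, and, as the text notes, the analogous computations for the Dirac operator are carried out in \cite{DSW-spin-23} and adapt with only minor changes.
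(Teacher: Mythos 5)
Your proof follows essentially the same route as the paper's (which defers the details to \cite{Minerbe-CMP} and \cite{DSW-spin-23}): spectral decomposition with respect to the cross-sectional Laplacian reducing to Euler-type ODEs with indicial roots $\nu_j^{\pm}$ (resp.\ $k$, $2-n-k$ at infinity), non-criticality giving uniform-in-$j$ weighted estimates, and absorption of the perturbations $h=O(r^{\alpha})$ and $g-g_{\R^n}=O(\rho^{-\tau})$ by taking the conical neighborhood small and $R_0$ large. The initial contradiction-sequence framing is superfluous since you end up proving the direct estimate anyway, but it does no harm.
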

To prove Lemma \ref{lem-refined-weighted-elliptic-estimate}, it is suffices to deal with model cone metric $\g$ on the conical neighborhood of the singular point $o$ and $g_{\R^n}$ near the AF infinity. The estimate near the AF infinity has been done in \cite{Minerbe-CMP}, by doing a spectral decomposition with respect to $\Delta_{g_{\mathbb{S}^{n-1}}}$, and reducing the problem to estimating coefficient functions, which satisfy certain second order ODEs. The Euclidean metric $g_{\R^n}$ can be viewed as a model cone metric with standard sphere as the cross section. The proof in \cite{Minerbe-CMP} can be adapted to that near the tip of a cone, and then the estimate near the conical singularity follows. In \cite{DSW-spin-23}, we have derived in detail the analogous estimate for the Dirac operator. So we omit the details of the proof, and refer to \cite{Minerbe-CMP} and \cite{DSW-spin-23}.

\subsection{Fredholm property of Laplace operator}\label{subsect-Fredholm}
We consider the unbounded operator
\begin{eqnarray*}
\Delta_{\delta, \beta}: {\rm Dom}(\Delta_{\delta, \beta}) & \rightarrow & L^2_{\delta-2, \beta-2}(M) \\
 \varphi & \mapsto & \Delta \varphi,
\end{eqnarray*}
whose domain ${\rm Dom}(\Delta_{\delta, \beta})$ is dense subset of $L^2_{\delta,\beta}(M)$ consisting of function $u$ such that $\Delta u \in L^2_{\delta-2, \beta-2}(M)$ in the sense of distributions.

As Lemma 4.8 in \cite{DSW-spin-23}, we have:
\begin{lem}
The unbounded operator $\Delta_{\delta, \beta}$ is closed.
\end{lem}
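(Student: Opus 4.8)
The plan is to establish that $\Delta_{\delta,\beta}$ is a closed operator by a standard graph-closedness argument, using the weighted elliptic estimate of Proposition~\ref{prop-weight-elliptic-estimate} to upgrade weak convergence of a Cauchy-type sequence to the convergence required. Concretely, suppose $\varphi_k \in {\rm Dom}(\Delta_{\delta,\beta})$ with $\varphi_k \to \varphi$ in $L^2_{\delta,\beta}(M)$ and $\Delta \varphi_k \to \psi$ in $L^2_{\delta-2,\beta-2}(M)$. I must show $\varphi \in {\rm Dom}(\Delta_{\delta,\beta})$ and $\Delta \varphi = \psi$. First I would observe that $\Delta \varphi_k \to \Delta \varphi$ in the sense of distributions on $M \setminus \{o\}$: indeed, for any test function $\phi \in C^\infty_0(M \setminus \{o\})$, we have $\int \varphi_k \, \Delta \phi \, d\vol_g = \int (\Delta \varphi_k) \phi \, d\vol_g$, and both sides pass to the limit because on any fixed compact subset of $M \setminus \{o\}$ the weighted norms $L^2_{\delta,\beta}$ and $L^2_{\delta-2,\beta-2}$ are equivalent to the ordinary $L^2$ norm (the weights $r$, $\rho$ are bounded above and below there), so $\varphi_k \to \varphi$ and $\Delta\varphi_k \to \psi$ in $L^2_{\rm loc}(M\setminus\{o\})$. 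Hence $\Delta \varphi = \psi$ as distributions, and in particular $\Delta \varphi = \psi \in L^2_{\delta-2,\beta-2}(M)$, which is precisely the condition $\varphi \in {\rm Dom}(\Delta_{\delta,\beta})$.

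Once this identification is made, closedness is immediate: $\varphi_k \to \varphi$ in $L^2_{\delta,\beta}(M)$ by hypothesis, and $\Delta\varphi_k \to \psi = \Delta\varphi$ in $L^2_{\delta-2,\beta-2}(M)$, so the graph of $\Delta_{\delta,\beta}$ is closed. If one instead wants to phrase the argument starting only from a graph-Cauchy sequence, the elliptic estimate of Proposition~\ref{prop-weight-elliptic-estimate} enters to guarantee that the limit actually lies in $W^{2,2}_{\delta,\beta}(M)$ with the expected bound, namely $\|\varphi_k - \varphi_m\|_{W^{2,2}_{\delta,\beta}} \le C(\|\Delta\varphi_k - \Delta\varphi_m\|_{L^2_{\delta-2,\beta-2}} + \|\varphi_k-\varphi_m\|_{L^2_{\delta,\beta}})$, so $(\varphi_k)$ is Cauchy in $W^{2,2}_{\delta,\beta}(M)$ and its limit inherits $\Delta\varphi \in L^2_{\delta-2,\beta-2}(M)$; either route works, and I would present the distributional one since it is cleaner.

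The only subtlety, and the step I expect to require the most care, is the behavior near the conical point $o$: one must be sure that no distributional mass concentrates at $o$ when passing to the limit, i.e. that testing against $C^\infty_0(M\setminus\{o\})$ genuinely characterizes membership in ${\rm Dom}(\Delta_{\delta,\beta})$ and that the definition of the domain is consistent with this. Since the domain is defined via the distributional action on $M \setminus \{o\}$ (or, equivalently, as the closure coming from $C^\infty_0(M\setminus\{o\})$ under the weighted norms), and since $\{o\}$ has zero capacity in the relevant weighted sense for the range of weights considered, this is not actually an obstruction — it simply needs to be stated. As noted in the excerpt, this is entirely parallel to Lemma~4.8 in \cite{DSW-spin-23} for the Dirac operator, so I would keep the proof brief and refer there for the analytic details that carry over verbatim with the Laplace operator in place of the Dirac operator.
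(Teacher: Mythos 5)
Your distributional graph-closedness argument is correct and is essentially the same standard proof the paper invokes by reference to Lemma 4.8 of \cite{DSW-spin-23}: since the domain is the maximal one defined by distributional action on $M\setminus\{o\}$, weighted $L^2$ convergence gives $L^2_{\mathrm{loc}}$ convergence away from $o$ and hence $\Delta\varphi=\psi$ distributionally, which is all that is needed. The appeal to Proposition~\ref{prop-weight-elliptic-estimate} and the capacity remark are superfluous (the definition of the domain already makes testing against $C^\infty_0(M\setminus\{o\})$ the criterion), but they do not affect the validity of the proof.
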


The usual $L^2$ pairing $(\cdot, \cdot)_{L^2(M)}$ identifies the topological dual space of $L^2_{\delta, \beta}$ with $L^{2}_{-\delta-n, - \beta -n}$. 
By this identification, the adjoint operator $\left( \Delta_{\delta, \beta} \right)^*$ of $\Delta_{\delta, \beta}$ is given as
\begin{eqnarray*}
\left( \Delta_{\delta, \beta} \right)^* : {\rm Dom}\left((\Delta_{\delta, \beta})^{*}\right) & \rightarrow & L^{2}_{-\delta-n, -\beta - n} \\
  u & \mapsto & \Delta u,
\end{eqnarray*}
where the domain: ${\rm Dom}\left( \left( \Delta_{\delta, \beta} \right)^* \right)$ is the dense subset of $L^{2}_{-\delta+2-n, -\beta+2-n}$ consisting of functions $u$ such that $\Delta u \in L^2_{-\delta-n, -\beta-n}$ in the distributional sense.

By applying the refined weighted elliptic estimate in Proposition \ref{prop-refined-weighted-elliptic-estimate}, as Proposition 4.9 in \cite{DSW-spin-23}, we have the following Fredholm property.
\begin{prop}\label{prop-Fredholm}
If $(\delta, \beta) \in \R^2$ is not critical as in Definition $\ref{defn-critical-index}$, then the operator $\Delta_{\delta, \beta}$ is Fredholm, namely, 
\begin{enumerate}[$(1)$]
\item ${\rm Ran}(\Delta_{\delta, \beta})$ is closed,
\item ${\rm dim}\left( {\rm Ker}(\Delta_{\delta, \beta}) \right) < +\infty$,
\item ${\rm dim}\left( {\rm Ker}((\Delta_{\delta, \beta})^*) \right) < +\infty$.
\end{enumerate}
\end{prop}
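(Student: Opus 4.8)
The plan is to deduce the Fredholm property from the refined weighted elliptic estimate in Proposition~\ref{prop-refined-weighted-elliptic-estimate}, exactly as in the smooth AF and conical settings (e.g.\ \cite{DSW-spin-23}, \cite{Minerbe-CMP}). First I would prove part $(2)$: if $u_k \in {\rm Ker}(\Delta_{\delta,\beta})$ with $\|u_k\|_{L^2_{\delta,\beta}} = 1$, then by Proposition~\ref{prop-refined-weighted-elliptic-estimate} the sequence is bounded in $W^{2,2}_{\delta,\beta}(M)$; on the fixed compact set $B \subset M \setminus \{o\}$ the Rellich--Kondrachov theorem gives a subsequence converging in $L^2(B)$, and then the refined estimate applied to differences $u_k - u_l$ shows $(u_k)$ is Cauchy in $W^{2,2}_{\delta,\beta}(M)$. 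Hence ${\rm Ker}(\Delta_{\delta,\beta})$ is finite-dimensional, since its unit sphere is compact.

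Next I would establish part $(1)$, that the range is closed. The standard argument: on a closed complement of the (finite-dimensional) kernel, the refined estimate upgrades to $\|u\|_{W^{2,2}_{\delta,\beta}} \leq C\|\Delta u\|_{L^2_{\delta-2,\beta-2}}$ without the $\|u\|_{L^2(B)}$ term. To see this, suppose not; take $u_k$ in the complement with $\|u_k\|_{W^{2,2}_{\delta,\beta}} = 1$ and $\|\Delta u_k\|_{L^2_{\delta-2,\beta-2}} \to 0$. As before, a subsequence converges in $L^2(B)$, and the refined estimate forces $(u_k)$ to converge in $W^{2,2}_{\delta,\beta}$ to some $u$ with $\Delta u = 0$, $\|u\|_{W^{2,2}_{\delta,\beta}} = 1$; but $u$ lies in the closed complement of the kernel and is in the kernel, a contradiction. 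With the improved estimate in hand, if $\Delta u_k \to f$ in $L^2_{\delta-2,\beta-2}$ with $u_k$ in the complement, then $(u_k)$ is Cauchy, converges to $u$, and $\Delta u = f$, so the range is closed.

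For part $(3)$, the finiteness of ${\rm dim}\,{\rm Ker}((\Delta_{\delta,\beta})^*)$, I would note that by the duality identification stated in the excerpt, the adjoint is again the Laplace operator acting between weighted $L^2$ spaces, now with weights $(-\delta+2-n, -\beta+2-n)$ for the domain and $(-\delta-n, -\beta-n)$ for the target. The key point is that $(\delta,\beta)$ being noncritical is equivalent to $(-\delta+2-n, -\beta+2-n)$ being noncritical as well: at the conical point this is the symmetry $\nu_j^+ + \nu_j^- = -(n-2)$, i.e.\ $\delta = \nu_j^\pm \iff (2-n-\delta) = \nu_j^\mp$; at infinity it is the symmetry $k \leftrightarrow 2-n-k$ built into Definition~\ref{defn-critical-infinity}. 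Therefore Proposition~\ref{prop-refined-weighted-elliptic-estimate} applies to the adjoint operator as well, and the same compactness argument as in part $(2)$ gives finite-dimensionality of its kernel.

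The main obstacle, as usual for this kind of statement, is not any single step but verifying that the refined elliptic estimate is genuinely available in both the domain and target weight ranges needed — in particular the symmetry of the noncriticality condition under the weight reflection $\delta \mapsto 2-n-\delta$, $\beta \mapsto 2-n-\beta$ induced by $L^2$ duality. Once that bookkeeping is checked, everything reduces to the standard functional-analytic package (Rellich compactness plus the a priori estimate), which is identical to the treatment of the Dirac operator in \cite{DSW-spin-23}, so I would present it briefly and refer there for the routine details.
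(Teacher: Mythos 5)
Your proposal is correct and follows essentially the same route as the paper, which simply defers to the analogous argument for the Dirac operator (Proposition 4.9 in \cite{DSW-spin-23}): the refined weighted elliptic estimate of Proposition \ref{prop-refined-weighted-elliptic-estimate} combined with Rellich compactness on the fixed compact set $B$ gives finite-dimensional kernel and closed range, and the symmetry of the noncriticality condition under $\delta \mapsto 2-n-\delta$, $\beta \mapsto 2-n-\beta$ (since $\nu_j^{+}+\nu_j^{-}=2-n$) lets the same estimate handle the adjoint. Your bookkeeping of the dual weights is exactly the point the reference checks, so nothing is missing.
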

For the proof one can see that of Proposition 4.9 in \cite{DSW-spin-23}.

\subsection{Solving the Laplace equation}

Let $(M^n, g, o)$ be an AF manifold with a single conical singularity at $o$ as defined in \ref{defn-AF-conic-mfld}. The equation $\Delta u=f$ has been solved near infinity in \cite{Minerbe-CMP}. Here we similarly solve it near the conical point $o$. The strategy is similar as solving the Dirac equation near the cone point in \cite{DSW-spin-23}, and so we omit proofs of some results in this section and refer to the analogous results for Dirac operator in Section 4 in \cite{DSW-spin-23}.

We use $B_r$ to denote the ball with radius $r$ centered at the conical singularity $o$. In the notion of weighted Sobolev norms and spaces over $B_r$, the subscript $\beta$ will be neglected, and they will be written as $\|\cdot\|_{W^{2, 2}_{\delta}(B_r)}$ and $W^{2, 2}_{\delta}(B_r)$, since there is no asymptotic control near infinity need to be concerned over the finite ball $B_r$.

For the model cone metric $\g = dr^2 + r^2 g^N$, by solving the Dirichlet problem for the equation $\Delta_{\g} u = f$ on a compact exhaustion of $B_{2r_0} \setminus \{o\}$, and using refined weighted elliptic estimate in Lemma \ref{lem-refined-weighted-elliptic-estimate}, one can solve $\Delta_{\g}u = f$ on $B_{2r_0}$ and prove the following:
\begin{lem}\label{lemma-G}
  For $\delta \in \R$, which is noncritical at conical point, and a small number $r_{0} >0$, there is a bounded operator
  \[G_{\g}:L^{2}_{\delta-2}(B_{2r_{0}})\to W^{2,2}_{\delta}(B_{2r_{0}})\]
   such that $\Delta_{\g}\circ G_{\g}=id$.
\end{lem}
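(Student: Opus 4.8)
\textbf{Plan for proving Lemma \ref{lemma-G}.}
The strategy is to construct $G_{\g}$ by solving the Dirichlet problem on an exhaustion of the punctured ball $B_{2r_0}\setminus\{o\}$ and passing to a limit using the refined weighted estimate of Lemma \ref{lem-refined-weighted-elliptic-estimate}. For a small $\epsilon > 0$, let $\Omega_\epsilon := \{\, \epsilon < r < 2r_0 \,\} \subset C(N)$, a compact manifold with boundary on which $\g$ is smooth and uniformly elliptic. Given $f \in L^2_{\delta - 2}(B_{2r_0})$, solve the Dirichlet problem $\Delta_{\g} u_\epsilon = f$ on $\Omega_\epsilon$ with $u_\epsilon = 0$ on $\partial\Omega_\epsilon = \{r=\epsilon\} \cup \{r = 2r_0\}$, which has a unique solution in $H^1_0(\Omega_\epsilon)$ by the Lax--Milgram theorem (using that $\Delta_{\g}$ has no kernel with this boundary condition since $-\Delta_{\g}$ is positive on $H^1_0$). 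Extend $u_\epsilon$ by zero to all of $B_{2r_0}\setminus\{o\}$.

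Next I would derive a uniform bound. For this one needs the a priori estimate of Lemma \ref{lem-refined-weighted-elliptic-estimate} specialized to the finite ball: for $\delta$ noncritical at the conical point, there is a compact $B \subset B_{2r_0}\setminus\{o\}$ (an annular region bounded away from both $r=0$ and $r=2r_0$) with
\[
\| u_\epsilon \|_{W^{2,2}_\delta(B_{2r_0})} \leq C\left( \| \Delta_{\g} u_\epsilon \|_{L^2_{\delta-2}(B_{2r_0})} + \| u_\epsilon \|_{L^2(B)} \right) = C\left( \| f \|_{L^2_{\delta-2}(B_{2r_0})} + \| u_\epsilon \|_{L^2(B)} \right),
\]
where we used that $\Delta_{\g} u_\epsilon = f$ on $\Omega_\epsilon \supset B$ once $\epsilon$ is small. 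The remaining task is to absorb the $\| u_\epsilon \|_{L^2(B)}$ term. This is done by a standard contradiction argument: if no uniform bound $\| u_\epsilon \|_{L^2(B)} \leq C\|f\|_{L^2_{\delta-2}}$ held, one would obtain a sequence $u_{\epsilon_k}$ with $\|u_{\epsilon_k}\|_{L^2(B)} = 1$, $\Delta_{\g} u_{\epsilon_k} \to 0$ in $L^2_{\delta - 2}$; by the above estimate $\{u_{\epsilon_k}\}$ is bounded in $W^{2,2}_\delta(B_{2r_0})$, hence (after passing to a subsequence) converges weakly there and strongly in $L^2_{\rm loc}$, so the limit $u_\infty \in W^{2,2}_\delta$ is a nonzero harmonic function with $\|u_\infty\|_{L^2(B)} = 1$. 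But the indicial/separation-of-variables analysis on the cone (expanding in eigenfunctions $E_j$ of $\Delta_{g^N}$, so the radial coefficients solve Euler-type ODEs with exponents $\nu_j^{\pm}$) shows that a harmonic function lying in $W^{2,2}_\delta$ with $\delta$ noncritical must vanish — the only homogeneous solutions are $r^{\nu_j^{\pm}}$ times eigenfunctions, and noncriticality excludes all of these from $W^{2,2}_\delta(B_{2r_0})$. This contradiction yields the uniform bound $\| u_\epsilon \|_{W^{2,2}_\delta(B_{2r_0})} \leq C \| f \|_{L^2_{\delta - 2}(B_{2r_0})}$.

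With the uniform bound in hand, extract a weakly convergent subsequence $u_{\epsilon_k} \rightharpoonup u$ in $W^{2,2}_\delta(B_{2r_0})$; passing to the limit in the weak formulation shows $\Delta_{\g} u = f$ on $B_{2r_0}\setminus\{o\}$, and since $u \in W^{2,2}_\delta \subset L^1_{\rm loc}(B_{2r_0})$ with $\Delta_{\g} u = f \in L^1_{\rm loc}$ and $\{o\}$ has zero capacity in dimension $n \geq 3$, the equation holds distributionally across $o$ as well. Setting $G_{\g} f := u$ gives a linear map; well-definedness (independence of the subsequence) follows because the difference of two solutions in $W^{2,2}_\delta$ is harmonic, hence zero by the indicial analysis just invoked, and boundedness is exactly the uniform estimate. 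This is the construction of $G_{\g}$, and $\Delta_{\g}\circ G_{\g} = \mathrm{id}$ holds by construction. The main obstacle is the indicial-root argument that kills harmonic functions in the noncritical weighted space — precisely here is where the hypothesis that $\delta$ is noncritical at the conical point is used, and it is the same mechanism that underlies Lemma \ref{lem-refined-weighted-elliptic-estimate}; fortunately the analogous Dirac computation is already carried out in detail in \cite{DSW-spin-23}, so I would cite that and only indicate the (easier) scalar analogue.
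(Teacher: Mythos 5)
Your skeleton (Dirichlet problems on the exhaustion $\{\epsilon<r<2r_0\}$, the refined weighted estimate of Lemma \ref{lem-refined-weighted-elliptic-estimate}, and a compactness/contradiction step to absorb the $\|u_\epsilon\|_{L^2(B)}$ term) is exactly the route the paper indicates, but the absorption step contains a genuine error. You claim that a harmonic function in $W^{2,2}_\delta(B_{2r_0})$ with $\delta$ noncritical must vanish ``because noncriticality excludes all $r^{\nu_j^{\pm}}\phi_j$ from $W^{2,2}_\delta(B_{2r_0})$''. On a finite ball this is false: by \eqref{eqn-polynomial-weighted-Sobolev-condition}, $r^{\nu}\chi_1$ belongs to the weighted space precisely when $\nu>\delta$, so every mode $r^{\nu_j^{+}}\phi_j$ with $\nu_j^{+}>\delta$ (all but finitely many $j$) lies in $W^{2,2}_\delta(B_{2r_0})$; noncriticality only guarantees $\nu_j^{\pm}\neq\delta$. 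What can kill the limit function is the combination of the weight condition (which removes the exponents below $\delta$) with the zero Dirichlet condition at the fixed outer boundary $r=2r_0$, which does pass to the limit but which your argument never invokes. The same false uniqueness claim is also what you use for well-definedness of $G_{\g}$.

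Even after that repair, the argument only works for $\delta>2-n$: since $\nu_j^-\le 2-n$ for all $j$, a noncritical $\delta<2-n$ admits nonzero harmonic functions in $W^{2,2}_\delta(B_{2r_0})$ vanishing at $r=2r_0$, e.g. $1-(2r_0)^{n-2}r^{2-n}$ from the $j=0$ mode, so uniqueness genuinely fails; worse, the uniform bound itself fails: on the flat $3$-dimensional cone with $f=r^{-4}\in L^{2}_{\delta-2}$ for $\delta<-2$, the radial Dirichlet solutions on $\{\epsilon<r<2r_0\}$ acquire a term of size $\epsilon^{-1}r^{-1}$ and blow up on any fixed annulus as $\epsilon\to 0$, so the exhaustion scheme itself does not converge in that range. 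This range is not vacuous for the paper: the harmonic functions $\mathcal{H}^{-}_{j,\phi}$ of Corollary \ref{cor-harmonic-spinor-on-cone} require solving with weights near $\nu_j^{-}+\alpha$, which can lie below $2-n$. So you need an additional ingredient there: either construct $G_{\g}$ mode by mode via separation of variables, choosing for each eigenvalue $\lambda_j$ the variation-of-parameters solution whose integration endpoint ($0$ or $2r_0$) is dictated by the position of $\nu_j^{\pm}$ relative to $\delta$, with bounds uniform in $j$ (this is the Minerbe-type argument, adapted to the cone tip, that the cited references carry out), or run your exhaustion argument for $\delta>2-n$ and treat the finitely many low eigenmodes separately when $\delta<2-n$, in the spirit of Lemma \ref{lem-decay-jump-for-Dbar}. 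As written, the proposal proves the lemma only for noncritical $\delta>2-n$.
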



A perturbation argument extends this result to a more general setting, namely, for an asymptotically conical metric $g = \g + h$ where $h$ satisfies (\ref{eqn-cone-metric-asymptotic}), and we have the following:
\begin{prop}\label{prop-G-g}
  For $\delta \in \R$, which is noncritical at cone point as in \defref{defn-critical-cone}, and a small number $r_{0}$, there is a bounded operator $G_g:L^{2}_{\delta-2}(B_{2r_{0}})\mapsto W^{2,2}_{\delta}(B_{2r_{0}})$ such that $\Delta_g \circ G_g =id$.
\end{prop}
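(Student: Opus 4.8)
The plan is to obtain $G_g$ from $G_{\overline g}$ (Lemma \ref{lemma-G}) by a Neumann-series perturbation, exploiting that the difference $\Delta_g - \Delta_{\overline g}$ is a lower-order operator with coefficients decaying like positive powers of $r$. First I would fix $\delta$ noncritical at the cone point and write, in the conical coordinates $(r, x) \in (0, 2r_0) \times N$ on $B_{2r_0}$, the operator $\Delta_g$ as a second-order operator whose coefficients differ from those of $\Delta_{\overline g}$ by terms controlled by the perturbation tensor $h$. Concretely, using $g = \overline g + h$ with $|\overline\nabla^k h|_{\overline g} = O(r^{\alpha - k})$ for $k = 0, 1, 2$, a direct computation (expanding $\Delta_g u = \frac{1}{\sqrt{\det g}}\partial_i(\sqrt{\det g}\, g^{ij}\partial_j u)$ and comparing with the cone formula \eqref{eqn-Delta-DeltaN}) shows that
\[
(\Delta_g - \Delta_{\overline g})u = a^{ij}\,\overline\nabla^2_{ij} u + b^i\,\overline\nabla_i u,
\]
where the tensor fields satisfy $|a|_{\overline g} = O(r^{\alpha})$ and $|b|_{\overline g} = O(r^{\alpha - 1})$ as $r \to 0$.

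The key step is then a mapping estimate: I claim the operator $P := (\Delta_g - \Delta_{\overline g})\circ G_{\overline g} : L^2_{\delta-2}(B_{2r_0}) \to L^2_{\delta - 2}(B_{2r_0})$ has operator norm less than $1$ once $r_0$ is chosen small. Indeed, $G_{\overline g}$ maps $L^2_{\delta-2}(B_{2r_0})$ boundedly into $W^{2,2}_\delta(B_{2r_0})$, so for $f \in L^2_{\delta-2}$ the function $v = G_{\overline g} f$ has $\|v\|_{W^{2,2}_\delta} \le C\|f\|_{L^2_{\delta-2}}$. Applying $\Delta_g - \Delta_{\overline g}$ and using the decay of $a, b$ together with the definition of the weighted norms, the worst terms $a^{ij}\overline\nabla^2_{ij}v$ and $b^i\overline\nabla_i v$ contribute at weight $\delta - 2$ with an extra factor of size $O(r_0^\alpha)$: schematically $\|r^{\alpha} \overline\nabla^2 v\|_{L^2_{\delta-2}} \lesssim r_0^\alpha \|\overline\nabla^2 v\|_{L^2_{\delta-2}} \le r_0^\alpha \|v\|_{W^{2,2}_\delta}$, and similarly for the first-order term since $r^{\alpha-1}\cdot\overline\nabla v$ has one fewer derivative but is weighted one order lower in $r$. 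Hence $\|Pf\|_{L^2_{\delta-2}} \le C' r_0^\alpha \|f\|_{L^2_{\delta-2}}$, and choosing $r_0$ small makes $\|P\| \le \tfrac12$. Then $\mathrm{Id} + P$ is invertible on $L^2_{\delta-2}(B_{2r_0})$ by Neumann series, and I set
\[
G_g := G_{\overline g}\circ (\mathrm{Id} + P)^{-1} : L^2_{\delta-2}(B_{2r_0}) \to W^{2,2}_\delta(B_{2r_0}),
\]
which is bounded as a composition of bounded operators. It satisfies $\Delta_g \circ G_g = (\Delta_{\overline g} + (\Delta_g - \Delta_{\overline g}))\circ G_{\overline g}\circ(\mathrm{Id}+P)^{-1} = (\mathrm{Id} + P)\circ(\mathrm{Id}+P)^{-1} = \mathrm{Id}$, as desired.

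I expect the main obstacle to be the bookkeeping in the mapping estimate for $P$: one must verify that every term produced by $\Delta_g - \Delta_{\overline g}$ (including those coming from differentiating $\sqrt{\det g}$ and the inverse metric $g^{ij}$, which involve products of $h$ and $\overline\nabla h$) lands in $L^2_{\delta-2}$ with a genuine gain of a positive power of $r_0$, and that no term is merely bounded without a small factor. The first-order term $b^i \overline\nabla_i v$ is the delicate one, since $b = O(r^{\alpha-1})$ only gains $r^{\alpha}$ after accounting for the weight shift between $W^{2,2}_\delta$ (which controls $r^{-\delta+1}\overline\nabla v$ in $L^2$) and $L^2_{\delta-2}$ (which asks for weight $r^{-\delta+2}$); so one genuinely uses $\alpha > 0$ here, exactly matching the hypothesis in Definition \ref{defn-conic-mfld}. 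All of this is parallel to the Dirac-operator perturbation argument carried out in \cite{DSW-spin-23}, so I would state the computation and estimate and refer there for the routine details.
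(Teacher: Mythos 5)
Your Neumann-series perturbation of $G_{\overline g}$, with the mapping estimate showing $(\Delta_g-\Delta_{\overline g})\circ G_{\overline g}$ has norm $O(r_0^{\alpha})$ on $L^2_{\delta-2}(B_{2r_0})$, is correct and is exactly the ``perturbation argument'' the paper invokes (with details deferred to the Dirac-operator analogue in \cite{DSW-spin-23}); the weight bookkeeping for the second- and first-order error terms matches the definition of the weighted norms. The only point left implicit is that the bound for $G_{\overline g}$ from Lemma \ref{lemma-G} can be taken uniform as $r_0$ shrinks, which follows from the scale invariance of the model cone and of the weighted norms, so your smallness conclusion stands.
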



As an application of Proposition $\ref{prop-G-g}$, one can solve harmonic functions near the conical point, which are asymptotic to harmonic functions, $r^{\nu^\pm_j}\phi_j$, with respect to the model cone metric $\g$.

\begin{cor}\label{cor-harmonic-spinor-on-cone}
Given $j\in \mathbb{N}$ and $\phi\in E_{j}$, there are functions $\mathcal{H}^{\pm}_{j,\phi}$ that are harmonic near the conical point and can be written as \[\mathcal{H}^{\pm}_{j,\phi}=r^{\nu^{\pm}_{j}}\phi+v_{\pm}\]
with $v_{+}$ in $W^{2,2}_{\eta}$ for any $\eta<\nu^{+}_{j}+\alpha$ and $v_{-}$ in $W^{2,2}_{\eta}$ for any $\eta<\nu^{-}_{j}+\alpha$.
\end{cor}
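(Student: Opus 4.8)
The plan is to construct $\mathcal{H}^{\pm}_{j,\phi}$ as a perturbation of the explicit model-cone harmonic functions $r^{\nu^{\pm}_j}\phi$, using the right inverse $G_g$ from Proposition~\ref{prop-G-g}. First I would verify that $r^{\nu^{\pm}_j}\phi$ is genuinely $\overline{g}$-harmonic: applying \eqref{eqn-Delta-DeltaN}, $\Delta_{\overline{g}}(r^{\nu}\phi) = \left(\nu(\nu-1) + (n-1)\nu + \frac{\lambda_j \cdot(-1)}{1}\right)r^{\nu-2}\phi$ — more precisely $\Delta_{\overline{g}}(r^\nu\phi) = \big(\nu(\nu-1)+(n-1)\nu\big)r^{\nu-2}\phi - \lambda_j r^{\nu-2}\phi = \big(\nu^2+(n-2)\nu-\lambda_j\big)r^{\nu-2}\phi$, and $\nu = \nu^{\pm}_j$ are precisely the two roots of $\nu^2 + (n-2)\nu - \lambda_j = 0$ by the definition \eqref{eqn-nu-defn}. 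So $\Delta_{\overline{g}}(r^{\nu^{\pm}_j}\phi) = 0$ exactly.

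Next, set $w_\pm := r^{\nu^{\pm}_j}\phi$ (cut off smoothly to live on $B_{2r_0}$ if needed, but on $B_{2r_0}$ itself it is already defined) and compute the error under the actual metric $g = \overline{g} + h$:
\[
\Delta_g w_\pm = \Delta_g w_\pm - \Delta_{\overline{g}} w_\pm = (\Delta_g - \Delta_{\overline{g}}) w_\pm =: f_\pm.
\]
The operator $\Delta_g - \Delta_{\overline{g}}$ is a second-order operator whose coefficients are controlled by $h$ and its first two $\overline{\nabla}$-derivatives; by \eqref{eqn-cone-metric-asymptotic}, these coefficients decay like $O(r^{\alpha})$ relative to the corresponding $\overline{g}$-coefficients (schematically, the zeroth-order part of the coefficient gains $r^\alpha$, the first-order part gains $r^{\alpha-1}\cdot r = r^\alpha$ after accounting for scaling, etc.). Since $w_\pm$ behaves like $r^{\nu^{\pm}_j}$ with two derivatives behaving like $r^{\nu^{\pm}_j - 1}$ and $r^{\nu^{\pm}_j-2}$, one gets $f_\pm = \Delta_g w_\pm = O(r^{\nu^{\pm}_j + \alpha - 2})$, and this estimate persists under the operator norm scaling so that $f_\pm \in L^2_{\delta - 2}(B_{2r_0})$ for any $\delta < \nu^{\pm}_j + \alpha$ (using \eqref{eqn-polynomial-weighted-Sobolev-condition}: the weight power must strictly exceed $\delta - 2$, i.e. $\nu^{\pm}_j + \alpha - 2 > \delta - 2$). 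Here I must choose $\delta$ additionally noncritical at the conical point, which is possible since $(\nu^{\pm}_j, \nu^{\pm}_j + \alpha)$ is a nonempty open interval and the critical values are discrete.

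Then I would set $v_\pm := -G_g f_\pm \in W^{2,2}_\delta(B_{2r_0})$, so that $\Delta_g v_\pm = -f_\pm$ and hence $\Delta_g(w_\pm + v_\pm) = f_\pm - f_\pm = 0$ near the conical point; define $\mathcal{H}^{\pm}_{j,\phi} := w_\pm + v_\pm = r^{\nu^{\pm}_j}\phi + v_\pm$. Since $\delta < \nu^{\pm}_j + \alpha$ was arbitrary subject to noncriticality, and $W^{2,2}_{\delta'} \subset W^{2,2}_\delta$ for $\delta' \geq \delta$, a standard bootstrap/consistency argument (the solution $v_\pm$ produced by $G_g$ does not depend on which admissible $\delta$ one uses — one checks $G_g f_\pm$ agrees across weights by uniqueness of solutions decaying faster than the nearest critical weight, cf. the analogous argument in \cite{DSW-spin-23}) shows $v_\pm \in W^{2,2}_\eta$ for every noncritical $\eta < \nu^{\pm}_j + \alpha$, and then for every $\eta < \nu^{\pm}_j + \alpha$ by inclusion. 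The main obstacle is the bookkeeping in the error estimate $f_\pm \in L^2_{\delta-2}$: one must carefully track how each term of $\Delta_g - \Delta_{\overline{g}}$ — differences of Christoffel symbols, of the inverse metric, and of $\sqrt{\det g}$ — picks up exactly an $r^\alpha$ gain in the appropriate weighted norm, which is where \eqref{eqn-cone-metric-asymptotic} for $k = 0, 1, 2$ is used in full. This is entirely parallel to the Dirac-operator computation in Section~4 of \cite{DSW-spin-23}, so I would state the estimate and refer there for the detailed verification.
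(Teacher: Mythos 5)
Your proposal is correct and follows exactly the route the paper intends: check that $r^{\nu^{\pm}_j}\phi$ is $\overline{g}$-harmonic via \eqref{eqn-Delta-DeltaN} and the defining quadratic for $\nu^{\pm}_j$, estimate the error $(\Delta_g-\Delta_{\overline{g}})(r^{\nu^{\pm}_j}\phi)=O(r^{\nu^{\pm}_j+\alpha-2})\in L^2_{\delta-2}(B_{2r_0})$ for noncritical $\delta<\nu^{\pm}_j+\alpha$ using \eqref{eqn-cone-metric-asymptotic}, and correct by $v_\pm=-G_g f_\pm\in W^{2,2}_\delta$ from Proposition \ref{prop-G-g}, exactly as in the Dirac-operator argument of \cite{DSW-spin-23} to which the paper defers. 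Your additional remark on consistency of the solution across admissible weights is a reasonable way to obtain the full range $\eta<\nu^{\pm}_j+\alpha$ and does not change the substance of the argument.
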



The solution of the equation $\Delta u = f$ may not be unique, and the difference of two solutions is a harmonic function. For the model cone metric $\overline{g} = dr^2 + r^2 g^N$, the Laplace operator $\Delta_{\overline{g}}$ is given in (\ref{eqn-Delta-DeltaN}), and the harmonic functions are linear combinations of $r^{\nu^{\pm}_j} \phi_j$, where $\nu^{\pm}_j$ are given in (\ref{eqn-nu-defn}) and $\phi_j \in E_j$, i.e. $\Delta_{g^N} \phi_j = \lambda_j \phi_j$. Based on this observation, by using (\ref{eqn-polynomial-weighted-Sobolev-condition}) and \lemref{lemma-G}, one can prove the following:

\begin{lem}\label{lem-decay-jump-for-Dbar}
  Suppose $\Delta_{\g} u=f$ with $u$ in $L_{\delta}^2(B_{2r_{0}})$ and $f$ in $L^{2}_{\delta'-2}(B_{2r_{0}})$ for non-critical exponents $\delta<\delta'$ and a small number $r_{0}$. Then there is an element $v$ of $L^{2}_{\delta'}(B_{2r_{0}})$ such that $u-v$ is a linear combination of functions: 
     $r^{\nu_{j}^{\pm}}\phi_{j}$ with $\phi_{j} \in E_{j}$ and $\delta<\nu^{\pm}_{j}<\delta'$;
\end{lem}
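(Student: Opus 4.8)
The plan is to use the explicit mapping properties of $G_{\g}$ from Lemma \ref{lemma-G} together with a separation-of-variables analysis of the model Laplacian \eqref{eqn-Delta-DeltaN}. Since $f \in L^2_{\delta'-2}(B_{2r_0})$ and $\delta'$ is noncritical at the conical point, Lemma \ref{lemma-G} produces $w := G_{\g}f \in W^{2,2}_{\delta'}(B_{2r_0})$ with $\Delta_{\g}w = f$. Then $u - w$ is $\Delta_{\g}$-harmonic on $B_{2r_0}\setminus\{o\}$, and since $u \in L^2_\delta$ and $w \in W^{2,2}_{\delta'} \subset L^2_\delta$ (as $\delta < \delta'$), we have $u - w \in L^2_\delta(B_{2r_0})$. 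So it suffices to show: any $\Delta_{\g}$-harmonic function $\psi \in L^2_\delta(B_{2r_0})$ can be written as a finite linear combination $\sum_{\delta < \nu^{\pm}_j < \delta'} c_{j,\pm}\, r^{\nu^{\pm}_j}\phi_j$ plus an element of $L^2_{\delta'}(B_{2r_0})$; then $v := w + (\psi - \sum c_{j,\pm}r^{\nu^\pm_j}\phi_j) \in L^2_{\delta'}$ and $u - v$ is the desired combination.

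To analyze $\psi$, expand in the $L^2(N)$-orthonormal eigenbasis $\{\phi_j\}$ of $\Delta_{g^N}$: write $\psi(r,\theta) = \sum_j a_j(r)\phi_j(\theta)$. By \eqref{eqn-Delta-DeltaN} each $a_j$ solves the Euler equation $a_j'' + \frac{n-1}{r}a_j' - \frac{\lambda_j}{r^2}a_j = 0$, whose solutions are spanned by $r^{\nu^+_j}$ and $r^{\nu^-_j}$ (or $r^{\nu}, r^{\nu}\log r$ in the degenerate case $\nu^+_j = \nu^-_j$, which happens only when $\lambda_j = 0$, i.e. $j=0$, $n=2$ — excluded since $n\geq 3$, so the two roots are always distinct). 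Thus $a_j(r) = c^+_j r^{\nu^+_j} + c^-_j r^{\nu^-_j}$. The condition $\psi \in L^2_\delta(B_{2r_0})$, via the characterization \eqref{eqn-polynomial-weighted-Sobolev-condition} applied modewise, forces $c^{\pm}_j = 0$ whenever $\nu^{\pm}_j \leq \delta$; and only finitely many $j$ have $\nu^+_j$ (hence any mode) in the bounded range $(\delta,\delta')$ since $\nu^+_j \to \infty$ as $\lambda_j \to \infty$ by Weyl's law. Collecting the finitely many modes with some exponent in $(\delta,\delta')$ gives the stated linear combination; the remaining tail — modes with both surviving exponents $\geq \delta'$ — lies in $L^2_{\delta'}(B_{2r_0})$. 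This last point requires a small uniform estimate: one must check that the series of tail modes, each individually in $L^2_{\delta'}$, converges in $L^2_{\delta'}$, which follows by comparing with the $L^2_\delta$-norm of $\psi$ and using that on each surviving tail mode the weight ratio $r^{-2(\delta'-\delta)}$ is bounded on $B_{2r_0}$ by a constant times $r_0^{-2(\delta'-\delta)}$ — but one should be slightly careful, and the cleanest route is to note the tail is $\psi$ minus finitely many explicit terms minus $w$, all of which are already controlled, so membership in $L^2_{\delta'}$ is automatic once the finitely many obstructing modes are subtracted.

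The main obstacle I expect is the rigorous justification that a distributional $\Delta_{\g}$-harmonic $\psi \in L^2_\delta$ genuinely decomposes as the \emph{convergent} Fourier–eigenfunction series with each coefficient an exact solution of the radial ODE, i.e. controlling the interchange of $\Delta_{g^N}$ with the (a priori only distributional) radial derivatives and ruling out pathological behavior as $r\to 0$. This is handled by the elliptic regularity already packaged in Proposition \ref{prop-weight-elliptic-estimate} (so $\psi \in W^{2,2}_{\delta,\mathrm{loc}}$ away from $o$, legitimizing the modewise computation on annuli) together with the refined estimate in Lemma \ref{lem-refined-weighted-elliptic-estimate} to get uniform control down to $r=0$; this is precisely the mechanism used for the Dirac operator in \cite{DSW-spin-23}, and the argument carries over with only the cosmetic change of replacing the Dirac eigenvalues on $N$ by $\{\nu^{\pm}_j\}$ and the operator $D_{\g}$ by $\Delta_{\g}$.
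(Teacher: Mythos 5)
Your overall route --- invert $f$ with $G_{\g}$ from Lemma \ref{lemma-G} to reduce to a $\Delta_{\g}$-harmonic function $\psi\in L^2_\delta$, expand $\psi$ in the eigenbasis of $\Delta_{g^N}$, solve the radial Euler equation modewise, and sort the exponents via \eqref{eqn-polynomial-weighted-Sobolev-condition} --- is exactly the argument the paper intends (the paper only sketches it, pointing to \eqref{eqn-polynomial-weighted-Sobolev-condition}, Lemma \ref{lemma-G}, and the analogous computations in \cite{Minerbe-CMP,DSW-spin-23}), and the structural steps (reduction to a harmonic function, nondegeneracy of the indicial roots for $n\ge 3$, finiteness of the exponents in $(\delta,\delta')$) are all correct.

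The one place where your justification does not hold up as written is the summation of the tail. The claim that the weight ratio $r^{-2(\delta'-\delta)}$ is bounded on $B_{2r_0}$ is false: since $\delta'>\delta$ this factor blows up as $r\to 0$; and the fallback that membership of the tail in $L^2_{\delta'}$ is ``automatic'' after removing the finitely many exponents in $(\delta,\delta')$ is not an argument --- at that point the tail is only known to lie in $L^2_\delta$, and $L^2_{\delta'}\subset L^2_\delta$ is the smaller space, so this is precisely what has to be proved. The fix is the routine modewise comparison: by orthogonality, $\|\psi\|^2_{L^2_\delta}\simeq\sum_{j,\pm}|c^\pm_j|^2\int_0^{2r_0}r^{2(\nu^\pm_j-\delta)-1}\,dr$ (for all but finitely many tail modes $a_j=c^+_jr^{\nu^+_j}$, so cross terms are harmless), while for each tail exponent $\nu>\delta'$ one has $\int_0^{2r_0}r^{2(\nu-\delta')-1}\,dr=(2r_0)^{-2(\delta'-\delta)}\,\frac{\nu-\delta}{\nu-\delta'}\int_0^{2r_0}r^{2(\nu-\delta)-1}\,dr$. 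The factor $\frac{\nu-\delta}{\nu-\delta'}$ is bounded uniformly over tail modes because the set $\{\nu^\pm_j\}$ is discrete and accumulates only at $\pm\infty$ (so $\nu-\delta'$ is bounded away from $0$ and the ratio tends to $1$ as $\nu\to+\infty$). With this uniform bound the tail series converges in $L^2_{\delta'}$, and the rest of your proof goes through as stated.
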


By using Corollary \ref{cor-harmonic-spinor-on-cone}, the property for model cone metrics in Lemma \ref{lem-decay-jump-for-Dbar} can be extended to asymptotically conical metrics $g$ as following:
\begin{prop}\label{prop-asmptotic-order} 
  Suppose $\Delta_g u=f$ with $u$ in $L_{\delta}^2(B_{2r_{0}})$ and $f$ in $L^{2}_{\delta'-2}(B_{2r_{0}})$ for non-critical exponents $\delta<\delta'$ . Then, up to making $B_{2r_{0}}$ smaller, there is an element $v$ of $L^{2}_{\delta'}(B_{2r_{0}})$ such that $u-v$ is a linear combination of the following functions:
    $\mathcal{H}^{\pm}_{j,\phi_{j}}$ with $\phi_{j}$ in $E_{j}$ and $\delta<\nu^{\pm}_{j}<\delta'$,
  where 
  $ \mathcal{H}^{\pm}_{j,\phi_{j}} $ are functions, harmonic near the conical singularity, obtained in Corollary \ref{cor-harmonic-spinor-on-cone}.
\end{prop}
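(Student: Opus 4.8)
The plan is to transfer the model–cone statement of Lemma~\ref{lem-decay-jump-for-Dbar} to the perturbed metric $g = \g + h$ by a bootstrap/iteration argument, exactly as the analogous Dirac statement was handled in \cite{DSW-spin-23}. First I would decompose $\Delta_g = \Delta_{\g} + (\Delta_g - \Delta_{\g})$ and note that, by the asymptotic control \eqref{eqn-cone-metric-asymptotic} on $h$, the error operator $E := \Delta_g - \Delta_{\g}$ is a second–order operator whose coefficients gain a factor $O(r^{\alpha})$ relative to those of $\Delta_{\g}$; concretely, $E$ maps $W^{2,2}_{\eta}(B_{2r_0})$ into $L^2_{\eta - 2 + \alpha}(B_{2r_0})$ with small operator norm once $r_0$ is taken small. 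Given $\Delta_g u = f$ with $u \in L^2_{\delta}$, $f \in L^2_{\delta' - 2}$, I rewrite this as $\Delta_{\g} u = f - E u$. Since $u \in L^2_{\delta} \subset W^{2,2}_{\delta}$ (after applying the weighted elliptic estimate, Proposition~\ref{prop-weight-elliptic-estimate}, on a slightly shrunk ball), the term $E u$ lies in $L^2_{\delta - 2 + \alpha}$, so the right–hand side of the model–cone equation lies in $L^2_{\min(\delta' , \delta + \alpha) - 2}$.

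Next I would apply Lemma~\ref{lem-decay-jump-for-Dbar} to this model–cone equation to peel off the harmonic modes of $\Delta_{\g}$ with exponents in the window $(\delta, \min(\delta', \delta + \alpha))$: we get $u = \sum c_{j}^{\pm} r^{\nu_j^{\pm}}\phi_j + v_1$ with $v_1 \in L^2_{\min(\delta', \delta+\alpha)}$. Now the point is to replace each model harmonic function $r^{\nu_j^{\pm}}\phi_j$ by the genuine $g$–harmonic function $\mathcal{H}^{\pm}_{j,\phi_j}$ from Corollary~\ref{cor-harmonic-spinor-on-cone}: since $\mathcal{H}^{\pm}_{j,\phi_j} - r^{\nu_j^{\pm}}\phi_j = v_{\pm} \in W^{2,2}_{\eta}$ for all $\eta < \nu_j^{\pm} + \alpha$, subtracting $\sum c_j^{\pm}\mathcal{H}^{\pm}_{j,\phi_j}$ from $u$ leaves a remainder that is still $g$–harmonic modulo $f$ and whose weight has improved by (essentially) $\alpha$. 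Then I iterate: the new remainder $\tilde u := u - \sum c_j^{\pm}\mathcal{H}^{\pm}_{j,\phi_j}$ satisfies $\Delta_g \tilde u = f$ (the $\mathcal{H}$'s being harmonic near $o$) and lies in a better–weighted space $L^2_{\delta_1}$ with $\delta_1 = \min(\delta', \delta + \alpha')$ for some $\alpha' \in (0,\alpha]$ dodging the finitely many critical exponents in $(\delta, \delta')$. Repeating the cycle finitely many times — each cycle gaining roughly $\alpha$ in the weight and extracting the harmonic modes $\mathcal{H}^{\pm}_{j,\phi_j}$ with $\nu_j^{\pm}$ in the newly exposed sub–window — drives the remainder into $L^2_{\delta'}(B_{2r_0})$, which is the desired $v$. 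One should take care that at each step $r_0$ may need to be shrunk so that $\|E\|$ stays small and the elliptic estimates hold, which is why the statement says "up to making $B_{2r_0}$ smaller".

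The main obstacle, as I see it, is bookkeeping the harmonic modes correctly across iterations: a priori the coefficients $c_j^{\pm}$ extracted in a later cycle could include modes with $\nu_j^{\pm}$ already lying in a window treated earlier, and one must argue this does not happen — i.e. that after subtracting $\mathcal{H}^{\pm}_{j,\phi_j}$ the remainder genuinely has no component in the span of previously extracted modes and sits in the strictly better space. This is ensured because $r^{\nu_j^{\pm}}\phi_j \notin L^2_{\eta}$ for $\eta \ge \nu_j^{\pm}$ by \eqref{eqn-polynomial-weighted-Sobolev-condition}, so the weighted–space membership of the remainder forces the coefficient of any mode with exponent $\le$ current weight to vanish; combined with noncriticality of $\delta, \delta'$ (so the window endpoints are never themselves exponents) the induction closes cleanly after finitely many steps. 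A secondary technical point is that $E u$ must genuinely gain the full $r^{\alpha}$ and not be spoiled by the first–order part of $\Delta_g - \Delta_{\g}$; this follows from $|\on h|_{\g} = O(r^{\alpha - 1})$ in \eqref{eqn-cone-metric-asymptotic}, which is exactly the $k=1$ case, so the Christoffel–symbol differences also carry a factor $O(r^{\alpha - 1})$ and the weight count works out. Apart from these, everything is a direct adaptation of the Dirac–operator argument in Section~4 of \cite{DSW-spin-23}, so I would state the result and refer there for the routine details.
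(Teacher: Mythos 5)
Your iteration argument---peeling off model-cone harmonic modes via Lemma~\ref{lem-decay-jump-for-Dbar}, replacing them by the $g$-harmonic functions $\mathcal{H}^{\pm}_{j,\phi_j}$ of Corollary~\ref{cor-harmonic-spinor-on-cone}, and bootstrapping the weight gain $\alpha$ on successively smaller balls while dodging the finitely many critical exponents---is exactly the argument the paper intends, which it omits and defers to the Dirac-operator analogue in Section~4 of \cite{DSW-spin-23}. Apart from the harmless notational slip ``$L^2_{\delta}\subset W^{2,2}_{\delta}$'' (you correctly mean that Proposition~\ref{prop-weight-elliptic-estimate} upgrades $u$ to $W^{2,2}_{\delta}$ on a slightly shrunk ball), the proposal is correct and essentially coincides with the paper's approach.
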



Now we are ready to prove the surjectivity of $\Delta_{\delta, \beta}$ for certain noncritical indices $(\delta, \beta)$, which enables us to solve the equation $\Delta_g u = f$.

First, by using Fredholm property in \propref{prop-Fredholm}, we can prove the following surjectivity. For the proof, we refer to proofs of Corollary 2 in \cite{Minerbe-CMP} and Proposition 4.15 in \cite{DSW-spin-23}.

\begin{prop}\label{prop-surjectivity-1}
Let $(M^n, g, o)$ be an AF manifold with a single conical singularity at $o$.
For any noncritical $(\delta, \beta)$ satisfying $\delta \leq \frac{2-n}{2}$ and $\beta \geq \frac{2-n}{2}$,
the map
\be
\Delta_{\delta, \beta}: {\rm Dom}\left( \Delta_{\delta, \beta} \right) \rightarrow L^2_{\delta -2, \beta -2}(M)
\ee
is surjective.

Moreover, the map
\be
\Delta_{\frac{2-n}{2}, \frac{2-n}{2}}: {\rm Dom}\left( \Delta_{\frac{2-n}{2}, \frac{2-n}{2}} \right) \rightarrow L^{2}_{-\frac{n+2}{2}, -\frac{n+2}{2}}(M)
\ee
is an isomorphism.
\end{prop}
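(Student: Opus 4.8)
The plan is to establish surjectivity via the standard Fredholm-duality argument, and then upgrade to an isomorphism at the self-dual weight by showing both kernel and cokernel vanish. Throughout I work with the noncritical exponent $(\delta,\beta)$, so that Proposition~\ref{prop-Fredholm} applies: $\Delta_{\delta,\beta}$ has closed range, and its range is the annihilator of $\mathrm{Ker}\big((\Delta_{\delta,\beta})^*\big)$, where by the duality pairing described above $(\Delta_{\delta,\beta})^*$ is the Laplacian acting on $L^2_{-\delta-n,-\beta-n}$-type spaces. Thus surjectivity of $\Delta_{\delta,\beta}$ onto $L^2_{\delta-2,\beta-2}(M)$ is equivalent to $\mathrm{Ker}\big((\Delta_{\delta,\beta})^*\big)=0$, i.e.\ to showing that any $u\in L^2_{-\delta+2-n,-\beta+2-n}(M)$ with $\Delta u=0$ (distributionally) must vanish.

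So the heart of the matter is a Liouville-type statement: under the hypotheses $\delta\le\frac{2-n}{2}$ and $\beta\ge\frac{2-n}{2}$, the adjoint weights satisfy $-\delta+2-n\ge\frac{2-n}{2}$ and $-\beta+2-n\le\frac{2-n}{2}$, and I claim a harmonic function lying in such a space is identically zero. First I would improve regularity and decay: by elliptic regularity $u$ is smooth on $M\setminus\{o\}$, and near infinity the decay-of-harmonic-functions analysis from \cite{Minerbe-CMP} together with the weighted estimates (Proposition~\ref{prop-refined-weighted-elliptic-estimate}) forces $u$ to decay at least like $\rho^{-\beta'}$ for the relevant noncritical $\beta'\le\frac{2-n}{2}<0$, hence $u\to 0$ at the AF end. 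Near the conical point, Proposition~\ref{prop-asmptotic-order} gives an asymptotic expansion $u=\sum c_{j}^{\pm}\mathcal{H}^{\pm}_{j,\phi_j}+v$ with $v$ in a better-weighted space and the sum over indices $\nu^{\pm}_j$ lying in the admissible window; since the admissible window sits at weights $\ge\frac{2-n}{2}$, only the decaying pieces $r^{\nu^{+}_j}\phi_j$ with $\nu^{+}_j\ge \frac{2-n}{2}$ (plus possibly the constant at $j=0$) can appear, so $u$ is bounded near $o$ as well. Then an integration-by-parts/maximum-principle argument on $M\setminus\{o\}$ finishes it: because the conical point is a set of zero capacity (the cross-section is $(n-1)$-dimensional and $n\ge3$, so one can cut out $B_r(o)$ with boundary integrals $\to 0$ as $r\to0$), and because $u\to0$ at infinity, testing $\Delta u=0$ against $u$ on $M\setminus B_r(o)$ and letting $r\to0$, $R\to\infty$ yields $\int_M|\nabla u|^2=0$, so $u$ is constant, and the decay at infinity forces $u\equiv0$.

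For the ``moreover'' part, at the self-dual weight $\delta=\beta=\frac{2-n}{2}$ (which I assume is noncritical — this holds since $\frac{2-n}{2}$ is not of the form $\nu^\pm_j$ nor an integer-type critical exponent at infinity under the standing assumptions, or one perturbs slightly), the adjoint weight equals the original weight, so the same Liouville argument applied to both $\Delta_{\frac{2-n}{2},\frac{2-n}{2}}$ and its adjoint shows $\mathrm{Ker}=0$ and $\mathrm{Ker}((\cdot)^*)=0$; combined with closed range this gives bijectivity, and the open mapping theorem promotes it to a topological isomorphism. The main obstacle I anticipate is the capacity/boundary-term bookkeeping at the conical singularity: one must verify carefully that the weighted bound on $u$ near $o$ (no growing modes) indeed makes the boundary integral $\int_{\partial B_r(o)}u\,\partial_\nu u$ tend to $0$, using the precise exponents $\nu^\pm_j$ and the volume element $r^{n-1}$ of the cone, rather than naively invoking a smooth maximum principle — the cross-section $N$ need not be a sphere, so $o$ is genuinely singular and this step cannot be taken for granted.
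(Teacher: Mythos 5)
Your proposal is correct and takes essentially the same route as the paper, which proves this proposition only by reference to Corollary 2 of \cite{Minerbe-CMP} and Proposition 4.15 of \cite{DSW-spin-23}: closed range from the Fredholm property plus vanishing of the adjoint kernel in the dual weights $(-\delta+2-n,\,-\beta+2-n)$, the latter via the harmonic expansions at the cone tip and at infinity together with an integration-by-parts Liouville argument. One small remark: no perturbation of the weight $\tfrac{2-n}{2}$ is needed, since every $\nu_j^{+}\geq 0$ and every $\nu_j^{-}\leq 2-n$, so $\tfrac{2-n}{2}\in(2-n,0)$ is automatically noncritical both at the conical point and at infinity.
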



Then by applying Propositions \ref{prop-asmptotic-order} and \ref{prop-surjectivity-1}, we can extend the region of indices $(\delta, \beta)$ for which $\Delta_{\delta, \beta}$ is surjective, and obtain the following: 
\begin{prop}\label{prop-surjectivity}
Let $(M^n, g, o)$ be an AF manifold with a single conical singularity at $o$. We have that
\be
\Delta_{\delta, \beta}: {\rm Dom}\left( \Delta_{\delta, \beta} \right) \rightarrow L^{2}_{\delta-2,\beta-2}(M)
\ee
is surjective for noncritical $\delta< 0$ and $\beta>2-n$.
\end{prop}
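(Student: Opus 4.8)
The plan is to start from the surjectivity already established in \propref{prop-surjectivity-1} at the indices $\delta_0 := \frac{2-n}{2}$ and $\beta_0 := \frac{2-n}{2}$, and then push the weights in both directions (making $\delta$ smaller, making $\beta$ larger) using the asymptotic-order description of solutions near the conical point (\propref{prop-asmptotic-order}) and its analogue at infinity from \cite{Minerbe-CMP}, together with the Fredholm theory of \propref{prop-Fredholm}. First I would fix a noncritical pair $(\delta,\beta)$ with $\delta<0$ and $\beta>2-n$; without loss of generality, since enlarging the range only makes things harder and the conclusion for a given $(\delta,\beta)$ follows from surjectivity at any noncritical $(\delta',\beta')$ with $\delta'\le\delta$, $\beta'\ge\beta$ (because $L^2_{\delta-2,\beta-2}\subset L^2_{\delta'-2,\beta'-2}$ and one then improves the decay of the solution a posteriori), it is enough to treat pairs that are "close from the good side" to $(\delta_0,\beta_0)$ and iterate. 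Given $f\in L^2_{\delta-2,\beta-2}(M)$, we certainly have $f\in L^2_{\delta_0-2,\beta_0-2}(M)$ when $\delta\le\delta_0$ and $\beta\ge\beta_0$; if not, one must instead argue in the other weight regime, but the mechanism is the same, so let me describe the representative case $\delta<\delta_0\le\beta_0<\beta$ first.

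In that case \propref{prop-surjectivity-1} yields $u_0\in \mathrm{Dom}(\Delta_{\delta_0,\beta_0})$ with $\Delta u_0=f$. The point is to show $u_0$ actually lies in $L^2_{\delta,\beta}(M)$ after subtracting an explicit harmonic correction, and here one uses the fact that no indices of $\Delta$ are critical in the open strips between the weights. Near infinity, the spectral/ODE analysis of \cite{Minerbe-CMP} shows that $u_0=v_\infty+\sum c_k\,(\text{harmonic polynomials and } \rho^{2-n-k}\text{-type terms})$ with $v_\infty\in L^2_{\cdot,\beta}$ away from $o$; since $\beta>2-n$ and $\beta$ is noncritical, the only harmonic model solutions with growth rate $\mu$ satisfying $\beta_0\le\mu<\beta$ are the constants (rate $0$) and positive-degree harmonic polynomials, but $\beta<1$ would exclude all of these except possibly constants — and in fact for $2-n<\beta\le 0$ there are none at all in the relevant range, so $v_\infty=u_0$ up to a bounded harmonic term near infinity. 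Symmetrically, near $o$, \propref{prop-asmptotic-order} gives $u_0=v_o+\sum_{\delta<\nu_j^\pm<\delta_0}(\text{lin. comb. of }\mathcal H^\pm_{j,\phi_j})$ with $v_o\in L^2_{\delta}(B_{2r_0})$; the functions $\mathcal H^\pm_{j,\phi_j}$ are genuinely harmonic near $o$. One then subtracts from $u_0$ a global function which agrees near $o$ with the offending $\mathcal H^\pm_{j,\phi_j}$-terms (cut off by $\chi_1$) and near infinity with the offending harmonic-polynomial terms (cut off by $\chi_2$); call this correction $w$. Then $\Delta w$ is compactly supported (the cutoffs' derivatives are) and smooth, hence $\Delta w\in L^2_{\delta-2,\beta-2}(M)$, and $\Delta(u_0-w)=f-\Delta w$ while $u_0-w\in L^2_{\delta,\beta}(M)$. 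It remains to solve $\Delta \tilde u=\Delta w$ with $\tilde u\in L^2_{\delta,\beta}(M)$: but $\Delta w$ lies in $L^2_{\delta_0-2,\beta_0-2}(M)\cap L^2_{\delta-2,\beta-2}(M)$ (compact support), and applying \propref{prop-surjectivity-1} then repeating the same asymptotic-subtraction argument for $\tilde u$ — or, cleaner, observing that since $\delta<0<\beta_0$ and $\beta>2-n$, the kernel and cokernel contributions are controlled so that a compactly supported right-hand side admits an $L^2_{\delta,\beta}$ solution — finishes it. Setting $u:=u_0-w-\tilde u$ gives $\Delta u=f$ with $u\in L^2_{\delta,\beta}(M)$, i.e. $u\in\mathrm{Dom}(\Delta_{\delta,\beta})$.

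For the general noncritical $(\delta,\beta)$ with $\delta<0$, $\beta>2-n$ not sandwiching $(\delta_0,\beta_0)$ — say $\delta_0<\delta<0$ — the right-hand side $f$ need not be in $L^2_{\delta_0-2,\beta_0-2}(M)$, so instead one first picks a noncritical $\delta'<\delta_0$ and $\beta'<\beta$ (if $\beta<\beta_0$) or keeps $\beta'=\beta$, applies the previous step in a weight pair that does contain $(\delta_0,\beta_0)$, and then improves the decay of the solution at the cone point from weight $\delta'$ up to weight $\delta$ by again invoking \propref{prop-asmptotic-order}: the jump terms $\mathcal H^\pm_{j,\phi_j}$ have rates $\nu_j^\pm$, and since the strip $(\delta',\delta)$ contains no such rate when we choose $\delta'$ just below the largest $\nu_j^\pm$ that is $<\delta$, there are no jump terms and the solution is automatically in $L^2_{\delta}$ near $o$; similarly at infinity using \cite{Minerbe-CMP}. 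Iterating finitely many times across the finitely many critical values in any bounded weight window completes the proof. I expect the main obstacle to be the bookkeeping in this iteration — precisely which harmonic model solutions (the $\mathcal H^\pm_{j,\phi_j}$ at $o$ and the harmonic polynomials / $\rho^{2-n-k}$ modes at infinity) appear as "jump" terms when crossing each critical index, and checking that the hypotheses $\delta<0$ and $\beta>2-n$ exactly guarantee that every such term can be absorbed into a compactly supported error that \propref{prop-surjectivity-1} handles; the analytic inputs (elliptic estimates, Fredholmness, the local solution operators $G_g$) are all in place, so the work is combinatorial-asymptotic rather than hard analysis.
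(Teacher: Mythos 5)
Your high-level plan --- anchor at \propref{prop-surjectivity-1} and then move the weights across strips free of critical indices using \propref{prop-asmptotic-order} (and its analogue at infinity from \cite{Minerbe-CMP}) --- is the paper's approach, but your execution reverses the geometry of the weighted spaces, and several steps as written are false. From \eqref{eqn-polynomial-weighted-Sobolev-condition} one has $L^2_{\delta',\beta'}\subset L^2_{\delta,\beta}$ exactly when $\delta'\ge\delta$ and $\beta'\le\beta$. Hence your claim that $f\in L^2_{\delta-2,\beta-2}$ lies in $L^2_{\delta_0-2,\beta_0-2}$ ``when $\delta\le\delta_0$ and $\beta\ge\beta_0$'' is backwards (the correct condition is $\delta\ge\delta_0$, $\beta\le\beta_0$), and your ``representative case'' $\delta<\delta_0\le\beta_0<\beta$ is precisely the quadrant already covered verbatim by \propref{prop-surjectivity-1}; moreover in that regime $L^2_{\delta_0,\beta_0}\subset L^2_{\delta,\beta}$, so there are no jump terms to subtract and the whole construction of the correction $w$ (cutting off $\mathcal H^\pm_{j,\phi_j}$ and harmonic polynomials, then re-solving $\Delta\tilde u=\Delta w$) addresses a non-problem; the assertion that for this re-solving ``the kernel and cokernel contributions are controlled'' is exactly what would need proof and is never justified. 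The genuinely new cases are the opposite ones, $\delta\in\left(\frac{2-n}{2},0\right)$ and/or $\beta\in\left(2-n,\frac{2-n}{2}\right)$, which you reach only in the last paragraph and again with reversed choices: for $\delta_0<\delta<0$ the right-hand side automatically lies in $L^2_{\delta_0-2}$ near the cone (the only possible failure of membership in the anchor space is at infinity, when $\beta>\beta_0$, and the fix is to anchor at $(\delta_0,\beta)$ or $(\delta,\beta_0)$, still inside the quadrant of \propref{prop-surjectivity-1}, not to lower $\delta$ further); and choosing $\delta'$ ``just below the largest $\nu_j^{\pm}$ that is $<\delta$'' puts that critical rate inside $(\delta',\delta)$, i.e.\ creates exactly the jump term you want to exclude.

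The observation that makes the actual proof three lines, and which your ``iterate across finitely many critical values'' scheme misses, is that under the hypotheses $\delta<0$, $\beta>2-n$ there are \emph{no} critical values to cross at all: $\nu_j^{+}\ge 0$ and $\nu_j^{-}\le 2-n$, and the critical rates at infinity are $k\ge 0$ and $2-n-k\le 2-n$, so the open interval $(2-n,0)$ contains no critical index at either end. The paper therefore reduces to noncritical $\delta\in\left(\frac{2-n}{2},0\right)$, $\beta\in\left(2-n,\frac{2-n}{2}\right)$, notes $L^2_{\delta-2,\beta-2}\subset L^2_{\frac{2-n}{2}-2,\frac{2-n}{2}-2}$, solves $\Delta u=f$ with $u\in L^2_{\frac{2-n}{2},\frac{2-n}{2}}$ by the isomorphism in \propref{prop-surjectivity-1}, and then applies \propref{prop-asmptotic-order} (and the corresponding statement at infinity) to conclude $u\in L^2_{\delta,\beta}$ with no jump terms, precisely because no $\nu_j^{\pm}$ and no critical rate at infinity lies in the intervals being crossed. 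If you correct the inclusion directions, discard the subtraction-and-resolve step, and invoke the emptiness of the critical set in $(2-n,0)$, your argument collapses to the paper's; as written, it does not go through.
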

\begin{proof}
By Proposition \ref{prop-surjectivity-1}, it suffices to show that $\Delta_{\delta, \beta}$ is surjective for $\frac{2-n}{2} < \delta < 0$ and $2-n < \beta < \frac{2-n}{2}$.

For arbitrary noncritical $\delta^\prime \in \left( \frac{2-n}{2}, 0 \right)$ and $\beta^\prime \in \left( 2-n, \frac{2-n}{2}\right)$, we take an arbitrary function $ f \in L^2_{\delta^\prime -2, \beta^\prime -2}(M) \subset L^2_{\frac{2-n}{2}-2, \frac{2-n}{2}-2}(M)$. Proposition \ref{prop-surjectivity-1} then implies that there exists $u \in L^2_{\frac{2-n}{2}, \frac{2-n}{2}}(M)$ such that $\Delta u = f$. Proposition \ref{prop-asmptotic-order} then implies that $ u \in L^2_{\delta^\prime, \beta^\prime}$, since there is no critical index at  conical point in $\left( \delta^\prime,0 \right)$ and no critical index at infinity in $\left( \beta^\prime, \frac{2-n}{2} \right)$. Therefore, $\Delta_{\delta^\prime, \beta^\prime}$ is surjective, and this completes the proof.
\end{proof}


\section{Nonnegativity of mass}\label{sec-nonnegativity}

In this section, we prove the non-negativity of the ADM mass for AF manifolds with a single conical singularity with nonnegative scalar curvature on the regular part, by following the approach of Ju and Viaclovsky in the case of orbifold singularity in \cite{TV2023}. For that, we first solve a harmonic function as following:
\begin{lem}\label{lem-harmonic-function}
Let $(M^n, g, o)$ be a $n$-dimensional AF manifold with a single conical singularity at $ o$. There exists a harmonic function $u$ on $ M $ which satisfies $u > 1$ and admits the expansion
\begin{equation}\label{eqn-harmonic-function-asymptotic}
u = \begin{cases}
      r^{2-n} + o(r^{2-n+\alpha^\prime}), & \text{as} \ \ r \to 0, \\
      1 + A \rho^{2-n} + o(\rho^{2-n - \epsilon}), & \text{as} \ \ \rho \to \infty,
      \end{cases}
\end{equation}
for $ 0< \alpha^\prime < \min\{1, \alpha\}$ and $ 0 < \epsilon < \min\{1, \tau\}$, and some constant $ A>0$. Here $\alpha$ and $ \tau $ are asymptotic orders of the metric $g$ in Definitions \ref{defn-conic-mfld} and \ref{defn-AF-conic-mfld}.
\end{lem}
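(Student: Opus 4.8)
The plan is to produce an approximate harmonic function with leading behaviour $r^{2-n}$ near $o$ and $1$ near the AF end, correct it to a genuine harmonic function using the surjectivity of the Laplacian on the weighted spaces, and then read off the sharp asymptotics together with the positivity of $u$ and of $A$. Since $\lambda_0=0$ has the constant eigenfunction $1\in E_0$, we have $\nu_0^-=2-n$, so by \corref{cor-harmonic-spinor-on-cone} (with $j=0$, $\phi\equiv1$) there is a function $\mathcal H^-_{0,1}$, harmonic for $\Delta_g$ on a punctured neighbourhood of $o$, of the form $\mathcal H^-_{0,1}=r^{2-n}+v_-$ with $v_-\in W^{2,2}_\eta$ for every $\eta<2-n+\alpha$. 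Picking a cutoff $\chi$ equal to $1$ near $o$ and supported where $\mathcal H^-_{0,1}$ is defined, set $u_0:=1+\chi\,(\mathcal H^-_{0,1}-1)$; then $\Delta_g u_0$ vanishes where $\chi\equiv1$ and near infinity, so $f_0:=-\Delta_g u_0$ is smooth and compactly supported in $M\setminus\{o\}$, hence lies in $L^2_{\delta-2,\beta-2}(M)$ for every $(\delta,\beta)$.

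To correct $u_0$, note that the interval $(2-n,0)$ contains no exponent that is critical at the conical point or at infinity, so every $\delta,\beta\in(2-n,0)$ is noncritical with $\delta<0$ and $\beta>2-n$; by \propref{prop-surjectivity} there is $w\in{\rm Dom}(\Delta_{\delta,\beta})\subset W^{2,2}_{\delta,\beta}(M)$ with $\Delta_g w=f_0$, and $u:=u_0+w$ is harmonic on $M\setminus\{o\}$. Near $o$ we have $\Delta_g w=0$, and \propref{prop-asmptotic-order} with a target exponent slightly above $0$ shows that the only mode appearing in the expansion of $w$ there is the constant mode $\mathcal H^+_{0,1}=1+o(r^{\alpha'})$, so $w$ converges to a constant as $r\to0$; since $2-n+\alpha'<0$ whenever $\alpha'<\min\{1,\alpha\}$, both $w$ and $v_-$ are $o(r^{2-n+\alpha'})$, which together with $u_0=\mathcal H^-_{0,1}$ near $o$ gives the first line of \eqref{eqn-harmonic-function-asymptotic}. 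Near infinity $u_0\equiv1$ and $\Delta_g w=0$; expanding $w$ into decaying harmonics there (the AF analogue of \propref{prop-asmptotic-order}, cf.\ \cite{Minerbe-CMP}), and using $\beta<0$ to exclude the constant and growing modes, we obtain $u=1+A\rho^{2-n}+o(\rho^{2-n-\epsilon})$ for $0<\epsilon<\min\{1,\tau\}$, where $A$ is the coefficient of the leading (constant cross-sectional) mode.

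It remains to prove $A>0$ and $u>1$. Elliptic regularity upgrades the above expansions to matching control of $\nabla u$, so applying the divergence theorem to $\Delta_g u=0$ on the region between a small level set $\{r=\sigma\}$ around $o$ and a large coordinate sphere $\{\rho=R\}$ shows that the flux
\[
\int_{\{\rho=R\}}\langle\nabla_g u,\nu\rangle\,dA_g \;=\; \int_{\{r=\sigma\}}\langle\nabla_g u,\nu\rangle\,dA_g ,
\]
with $\nu$ the unit normal pointing towards the AF end, is independent of $R$ and $\sigma$. Letting $\sigma\to0$, and using $\alpha,\alpha'>0$ to kill the metric and remainder errors, the right-hand side equals $(2-n)\,\vol(N,g^N)\neq0$, which in particular forces $A\neq0$; letting $R\to\infty$ and using $\tau,\epsilon>0$, the left-hand side equals $A(2-n)\omega_n$, whence $A=\vol(N,g^N)/\omega_n>0$. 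Finally $u-1$ is harmonic on $M\setminus\{o\}$, tends to $+\infty$ as $r\to0$, and equals $A\rho^{2-n}(1+o(1))>0$ for large $\rho$; fixing $\sigma$ small and $R$ large so that $u-1>0$ on $\{r=\sigma\}$ and on $\{\rho=R\}$, the maximum principle on the compact region between these hypersurfaces forces $u-1>0$ there, hence $u>1$ on all of $M\setminus\{o\}$.

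The main obstacle is the two-ended asymptotic bookkeeping: the weights $(\delta,\beta)$ must be chosen so that the correction $w$ is strictly subordinate to the prescribed leading terms at both ends at once — which is exactly where the noncriticality hypotheses and the surjectivity/expansion results \propref{prop-surjectivity} and \propref{prop-asmptotic-order} (together with the AF expansion of \cite{Minerbe-CMP}) enter — and then the flux computation, which simultaneously rules out $A=0$ and identifies $A=\vol(N,g^N)/\omega_n$, must be carried out with the metric error terms under control.
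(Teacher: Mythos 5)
Your proposal is correct and runs on the same machinery as the paper (surjectivity of $\Delta_{\delta,\beta}$ from \propref{prop-surjectivity}, the expansion results \propref{prop-asmptotic-order}/\cite{Minerbe-CMP}, weighted elliptic bootstrap, maximum principle), but it differs in two genuine ways. First, your approximate solution is built from the exactly harmonic function $\mathcal{H}^-_{0,1}=r^{2-n}+v_-$ of \corref{cor-harmonic-spinor-on-cone} glued to the constant $1$, so the error $f_0$ is smooth and compactly supported and the correction $w$ is harmonic near both ends; its conical behaviour can then be read off from \propref{prop-asmptotic-order} (only the constant mode $\nu_0^+=0$ is crossed in $(2-n,0)$). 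The paper instead takes $u_0=\phi r^{2-n}$, whose error is $O(r^{\alpha-n})$ near the cone, and obtains the $o(r^{2-n+\alpha'})$ decay of the correction by solving with weight $\delta<2-n+\alpha$ and bootstrapping. The trade-off is that in your route the pointwise statements --- that $v_\pm$ and the $L^2_{\delta'}$ (resp.\ $L^2_{\beta'}$) remainders, together with their gradients, are $o(r^{2-n+\alpha'})$ and $o(\rho^{2-n-\epsilon})$ --- still require the weighted $W^{2,p}$ bootstrap via \propref{prop-weight-elliptic-estimate} and the weighted Sobolev embedding, exactly as carried out in the paper; your phrase ``elliptic regularity upgrades'' is where that (routine but necessary, especially for the gradient control used in the flux identity) work lives. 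Second, for positivity the paper argues purely by the strong maximum principle ($u\ge 1$, hence $A> 0$, hence $u>1$), whereas you prove $A>0$ by the flux identity, obtaining the sharper statement $A=\vol(N,g^N)/\omega_n$, and then get $u>1$ by the minimum principle on exhausting annular regions; this is a legitimate and slightly more quantitative alternative, again contingent on the gradient asymptotics at both ends.
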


\begin{proof}
Choose a cut-off function $\phi$ satisfying
\begin{equation*}
\phi = \begin{cases}
        1, & \text{on} \ \ B_{\frac{1}{2}}(o), \\
        0, & \text{on} \ \ M \setminus B_1(o).
       \end{cases}
\end{equation*}
Let $u_0 = \phi r^{2-n}$, which is a smooth function supported in a neighborhood of the conically singular point $o$. Then because $\Delta_{\g} r^{2-n} = 0$, by the asymptotic control of $g$ near conically singular point $o$, we have 
\begin{equation*}
\Delta_g u_0 = \begin{cases}
             O(r^{-n+ \alpha}), & \text{as} \ \ r \to 0, \\
             0, & \text{on} \ \ M \setminus B_1(o).
             \end{cases}
\end{equation*}
By (\ref{eqn-polynomial-weighted-Sobolev-condition}), this implies that 
\begin{equation*}
\Delta_g u_0 \in L^2_{\delta-2, \beta-2}(M), \quad \forall \delta < 2-n+\alpha, \ \ \beta > 2-n.
\end{equation*}
Then by applying Proposition \ref{prop-surjectivity}, we obtain $v \in L^{2}_{\delta, \beta}$ such that
\begin{equation*}
\Delta_g v = \Delta_g u_0.
\end{equation*}
So by setting $u_1 = u_0 - v$, we have $\Delta_g u_1 = 0$. 

Now we derive the asymptotic behavior of $v$ near conically singular point and AF infinity. Near the singular point $o$, $u_1 = r^{2-n} - v$, and so $\Delta_g u_1 = 0$ implies
\begin{equation*}
\Delta_g v = \Delta_g r^{2-n} = \left( \Delta_g - \Delta_{\g} \right) r^{2-n} = O(r^{-n+\alpha}) \in L^{p}_{\delta -2}(B_{\frac{1}{2}}(o)), \ \ \forall \delta < 2-n+\alpha \ \ \text{and} \ \  \forall p >1,
\end{equation*}
by \eqref{eqn-polynomial-weighted-Sobolev-condition}.
Then the weighted elliptic estimate in \propref{prop-weight-elliptic-estimate} implies that $v \in W^{2, 2}_{\delta}(B_{\frac{1}{2}}(o))$ for  $\delta < 2-n+\alpha$. Consequently, the weighted Sobolev inequality in Proposition 3.4 in \cite{DW-MRL-2020} implies that $v \in L^{\frac{2n}{n-2}}_{\delta}(B_{\frac{1}{2}}(o))$, and by applying weighed elliptic estimate in \propref{prop-weight-elliptic-estimate} (with $p = \frac{2n}{n-2}$) again, we obtain $v \in W_\delta^{2, \frac{2n}{n-2}}(B_{\frac{1}{2}}(o))$ for $ \delta < 2-n+\alpha$. By repeating this process, we can obtain that $v \in W^{2, p}_{\delta}(B_{\frac{1}{2}}(o))$ for all $p>1$ and $\delta<2-n+\alpha$, and so, by Proposition 3.4 in \cite{DW-MRL-2020}, we have
\begin{equation}
v = o(r^{2-n+\alpha^\prime}), \ \ \text{as} \ \ r \to 0,
\end{equation}
for $0< \alpha^\prime < \alpha$.

Near the AF infinity, $u_1 = v \in L^{2}_{\beta}(M_\infty)$ for $\beta > 2-n$, and $\Delta_g v = \Delta_g u_1 =0 \in L^{2}_{\beta^\prime}(M_\infty)$ for $1-n < \beta^\prime < 2-n$. Thus, Proposition 4 in \cite{Minerbe-CMP} implies that 
\begin{equation*}
v = A \rho^{2-n} + v^\prime, \ \ v^\prime \in L^{2}_{\beta^\prime}(M_\infty),
\end{equation*}
for $1-n < \beta^\prime < 2-n$, and some constant $A$. Moreover,
\begin{equation*}
\Delta_g v^\prime = - \Delta_g (A \rho^{2-n}) = (\Delta_{g_{\R^{n}}} - \Delta_g) (A \rho^{2-n}) = O(\rho^{-n-\tau}) \in L^{p}_{\beta^\prime - 2}(M_\infty),
\end{equation*}
for $(2-n-\tau <) 1- n < \beta^\prime < 2-n$, and all $p>1$. Then the weighted elliptic estimate in Proposition \ref{prop-weight-elliptic-estimate} implies $v^\prime \in W^{2, 2}_{\beta^\prime}(M_\infty)$. Then by a similar weighted elliptic bootstrapping argument as above, we can obtain $v^\prime \in W^{2, p}_{\beta^\prime}(M_\infty)$ for all $p>1$ and $1-n < \beta^\prime < 2-n$, and so
\begin{equation}
v^\prime = o(\rho^{2-n-\epsilon}), \ \ \text{as} \ \ \rho \to \infty,
\end{equation}
for $0 < \epsilon < 1$.

In summary, we obtain a harmonic function $u_1$ admitting the asymptotic behavior:
\begin{equation}
u_1 = \begin{cases}
       r^{2-n} + o(r^{2-n+\alpha^\prime}), & \text{as} \ \ r \to 0, \\
       A\rho^{2-n} + o(\rho^{2-n-\epsilon}), & \text{as} \ \ \rho \to \infty,
      \end{cases}
\end{equation}
for $0 < \alpha^\prime < \alpha$ and $0 < \epsilon < \min\{1, \tau\}$.

We let $u := 1 + u_1$. Then $u$ is a harmonic function satisfying the asymptotic control in (\ref{eqn-harmonic-function-asymptotic}). Finally, by using the strong maximum principle and the asymptotic behavior in (\ref{eqn-harmonic-function-asymptotic}), we obtain $u \geq 1$, and in particular, $A > 0$. Then by applying the strong maximum principle again, we can obtain $u > 1$.
\end{proof}

Now we are ready to prove the following:
\begin{thm}\label{thm-nonnegativity-of-mass}
Let $(M^n, g, o), n \geq 3,$ be a $n$-dimensional AF manifold with a single conical singularity at $o$. If the scalar curvature $\Sc_g \geq 0$, then the ADM mass $m(g) \geq 0$.
\end{thm}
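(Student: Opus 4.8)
The plan is to use the harmonic function $u$ from Lemma \ref{lem-harmonic-function} to perform a conformal blow-up of the conical singularity, following the strategy of Ju and Viaclovsky in \cite{TV2023}. Set $\tilde g = u^{\frac{4}{n-2}} g$ on $M \setminus \{o\}$. Since $u$ is harmonic and $\Sc_g \geq 0$, the conformal transformation law for scalar curvature gives $\Sc_{\tilde g} = u^{-\frac{n+2}{n-2}}\left( \Sc_g u - \frac{4(n-1)}{n-2}\Delta_g u \right) = u^{-\frac{4}{n-2}} \Sc_g \geq 0$ on the regular part. The point of this deformation is its effect on the two ends: near the AF infinity, since $u = 1 + A\rho^{2-n} + o(\rho^{2-n-\epsilon})$ with $A > 0$, the metric $\tilde g$ is still asymptotically flat with the same asymptotic order (up to the usual bookkeeping), and a standard computation of how the ADM mass transforms under a conformal factor asymptotic to $1 + A\rho^{2-n}$ shows
\begin{equation*}
m(\tilde g) = m(g) + \frac{4(n-1)}{(n-2)\omega_n} \cdot c_n A
\end{equation*}
for a positive dimensional constant (equivalently, $m(\tilde g) = m(g) + 2A$ in the standard normalization), so $m(\tilde g) \leq m(g)$; it therefore suffices to show $m(\tilde g) \geq 0$. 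Near the conical point, the factor $u^{\frac{4}{n-2}} \sim r^{-4} \cdot (\text{higher order})$... wait, $u \sim r^{2-n}$ so $u^{\frac{4}{n-2}} \sim r^{-4}$, and $\tilde g \sim r^{-4}(dr^2 + r^2 g^N)$. Substituting $s = \frac{1}{(n-2)} r^{-(n-2)}$... more precisely, letting $t$ be a new radial variable with $dt = -r^{-2}dr$, i.e. $t = r^{-1}$, one checks $\tilde g$ is asymptotic to $dt^2 + t^2 g^N$ as $t \to \infty$: the conical singularity has been flipped into a complete asymptotically conical end.

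The key remaining step is to convert this into a manifold with boundary and apply the positive mass theorem with boundary. First I would truncate: choose the level set $\Sigma = \{u = c\}$ for a large regular value $c$, which lies deep in the asymptotically conical end, and let $\Omega = \{u \leq c\}$. Then $(\Omega, \tilde g)$ is an AF manifold with compact boundary $\Sigma$, with $\Sc_{\tilde g} \geq 0$. The crucial sign computation is that of the mean curvature $H_\Sigma$ of $\Sigma$ with respect to the normal pointing toward the AF end (equivalently, into $\Omega$). Since $u$ is harmonic and $\Sigma$ is a level set of $u$ in the region where $u$ is large, $H_\Sigma$ with respect to $\nabla u / |\nabla u|$ relates to $\Delta u = 0$ and the second derivatives of $u$; in the asymptotically conical model $\tilde g \approx dt^2 + t^2 g^N$ with $u \approx$ a decreasing function of $t$... actually $u$ increases as $r \to 0$, i.e. as $t \to \infty$, so the level sets $\{u = c\}$ for large $c$ are the cross-sections $\{t = \text{const}\}$ far out, whose mean curvature with respect to the outward (increasing $t$) normal is $(n-1)/t > 0$, hence with respect to the normal pointing back toward the AF end (decreasing $t$, into $\Omega$) it is $-(n-1)/t < 0$. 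So $\Sigma$ has \emph{negative} mean curvature with respect to the inward normal, which is exactly the sign condition under which Hirsch–Miao's positive mass theorem for manifolds with boundary \cite{Hirsch-Miao-20} (or the version in \cite{Miao2023}) applies and yields $m(\tilde g) \geq 0$. Chaining the inequalities gives $m(g) \geq m(\tilde g) \geq 0$.

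I expect the main obstacle to be the careful verification of the two asymptotic analyses needed to legitimately invoke the boundary positive mass theorem: (i) checking that $\tilde g = u^{\frac{4}{n-2}}g$ genuinely satisfies the AF decay hypotheses (with the right decay order $\tau$ and decay of derivatives) near the AF end, given only the stated asymptotics of $u$ and its implicit control of derivatives from the weighted elliptic estimates, and deriving the precise mass-change formula $m(\tilde g) = m(g) + 2A$; and (ii) establishing rigorously that the level set $\Sigma = \{u = c\}$ is, for a suitable choice of large regular value $c$, an embedded closed hypersurface deep in the asymptotically conical end on which the mean curvature (with respect to the normal pointing to the AF infinity) is negative. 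For (ii) the cleanest route is to note that in the asymptotically conical end $\tilde g$ converges in a weighted $C^2$ sense to the exact cone $dt^2 + t^2 g^N$, on which $u$ itself is asymptotic to a positive multiple of $t^{n-2}$ (matching $r^{2-n}$), so the level sets are graphs over the cross-section $N$ converging to round slices, and their mean curvature converges to $-(n-1)/t + o(1/t) < 0$; by Sard's theorem one may pick $c$ a regular value, and by taking $c$ large enough the negativity is guaranteed. Everything else — the conformal scalar-curvature identity, the maximum-principle positivity of $u$, the application of \cite{Hirsch-Miao-20} — is then routine.
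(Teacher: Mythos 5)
There is a genuine gap, and it is in the mass bookkeeping at the AF end. The conformal blow-up by the harmonic function $u = 1 + A\rho^{2-n} + o(\rho^{2-n-\epsilon})$ with $A>0$ \emph{increases} the ADM mass: your own displayed formula $m(\tilde{g}) = m(g) + c\,A$ with $c>0$ and $A>0$ says exactly this (think of the Schwarzschild metric as the conformal blow-up of flat $\R^n$, which has positive mass), yet you then conclude ``$m(\tilde{g}) \leq m(g)$,'' which contradicts that formula. With the correct direction, the Hirsch--Miao theorem applied to the truncated manifold only yields $m(\tilde{g}) \geq 0$, i.e. $m(g) \geq -c\,A$, and since $A$ is a \emph{fixed} positive constant determined by the normalization $u \sim r^{2-n}$ at the cone tip, this does not give $m(g) \geq 0$. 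So the chain ``$m(g) \geq m(\tilde{g}) \geq 0$'' collapses, and as written the argument proves only a one-sided bound by a fixed negative number.

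The missing idea (which is how the paper closes this gap) is to interpolate: for each $\delta>0$ set $u_\delta = \delta u + (1-\delta)$, which is still harmonic, still $>1$, behaves like $\delta r^{2-n}$ at the cone tip (so after the change of variable $s = \delta^{2/(n-2)}/r$ the singularity is still flipped into an asymptotically conical end, and a slice $\{s = s_0\}$ with $s_0$ large is strictly mean concave toward the AF end), but at infinity equals $1 + \delta A\rho^{2-n} + o(\rho^{2-n-\epsilon})$, so the mass correction is only $c\,\delta A$. Applying Hirsch--Miao to $g_\delta = u_\delta^{4/(n-2)}g$ for every $\delta>0$ gives $m(g) + c\,\delta A \geq 0$, and letting $\delta \to 0$ yields $m(g)\geq 0$. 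Apart from this point, your construction (conformal scalar-curvature identity, the flipped conical end, truncation along a hypersurface deep in that end with negative mean curvature toward the AF infinity, and the appeal to the boundary positive mass theorem) is essentially the paper's argument; your choice of a level set $\{u=c\}$ instead of a coordinate slice is a harmless variant.
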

\begin{proof}
For the harmonic function obtained in Lemma \ref{lem-harmonic-function}, and any $\delta > 0$, we define
\begin{equation}
u_\delta = \delta u + (1 - \delta),
\end{equation}
which satisfies $\Delta_g u_\delta = 0$, $u_\delta >1$, and admits the asymptotic expansion
\begin{equation}\label{eqn-u-delta-asymptotic}
u_\delta = \begin{cases}
           \delta r^{2-n} + o(r^{2 - n + \alpha^\prime}), & \text{as} \ \ r \to 0, \\
           1 + \delta A r^{2-n} + o(\rho^{2-n-\epsilon}), & \text{as} \ \ \rho \to \infty,
           \end{cases}
\end{equation}
for $0 < \alpha^\prime < \alpha$ and $ 0 < \epsilon < 1$. Here the constant $A >0$.

Then we do a conformal change for the metric $g$, and let 
\begin{equation}
g_\delta := \left( u_{\delta} \right)^{\frac{4}{n-2}} g \ \ \text{on } \ \ M.
\end{equation}
By the asymptotic expansion of $u_\delta$ at infinity as in (\ref{eqn-u-delta-asymptotic}), $g_\delta$ is asymptotically flat of order $\min\{\tau, n-2\}$. Near the conically singular point $o$, the metric $g$ is given as
\begin{equation}
g = dr^2 + r^2 g^N + h,
\end{equation}
where $h$ satisfies (\ref{eqn-cone-metric-asymptotic}), and so 
\begin{equation}
g_\delta 
= \left( u_\delta \right)^{\frac{4}{n-2}} g 
= \delta^{\frac{4}{n-2}} r^{-4}\left( 1+ o(r^{\alpha^\prime})\right) \left(dr^2 + r^2 g^N + h \right), \ \ \text{as} \ \ r \to 0.
\end{equation}
Now we do a change of variable, by letting $s = \delta^{\frac{2}{n-2}}\frac{1}{r}$, and rewrite $g_\delta$ as
\begin{equation}
g_\delta = (1 + o(s^{-\alpha^\prime}))(ds^2 + s^2 g^N + \tilde{h}),
\end{equation}
where $|\tilde{h}|_{ds^2 + s^2 g^N} = O(s^{-\alpha})$ as $s \to \infty$. Therefore, 
\begin{equation}
g_\delta = ds^2 + s^2 g^N + o(s^{-\alpha^\prime}), \ \ \text{as} \ \ s \to \infty,
\end{equation}
since $0 < \alpha^\prime < \alpha$. Note that the metric $g_\delta$ is asymptotically conical, and it tends to the infinity end of a cone, as $s \to \infty$ (i.e. $r \to 0$). Moreover, the scalar curvature of $g_\delta$ is given by 
\begin{equation}
\Sc_{g_\delta} 
= \frac{4(n-1)}{n-2} \left( u_\delta \right)^{-\frac{n+2}{n-2}} \left( -\Delta_g u_\delta + \frac{n-2}{4(n-1)}\Sc_g u_\delta \right) 
= \left( u_\delta \right)^{-\frac{4}{n-2}} \Sc_g \geq 0.
\end{equation}
For any fixed $\delta >0$, we can choose a hypersurface $\Sigma_{\delta, s_0} : = \{s = s_0\}$ for some sufficiently large $s_0$ such that $\Sigma_{\delta, s_0}$ is strictly mean concave with respect to the normal pointing to the AF end of $(M, g_\delta)$. Let $M_\delta = M \setminus \{s \geq s_0\}$ be a AF manifold with strictly mean concave boundary. By applying Theorem in \cite{Hirsch-Miao-20}, we obtain
\begin{equation}
m(M_\delta, g_\delta) \geq 0.
\end{equation}
On the other hand,
\begin{equation}
m(M_\delta, g_\delta) = m(M, g) + 4(n-2)\delta A.
\end{equation}
Because this is true for any constant $\delta >0$, and $A$ is a fixed constant, we must have 
\begin{equation}
m(M, g) \geq 0.
\end{equation}
\end{proof}

\begin{rmrk}
{\rm
Note that $(M, g_\delta)$ is a complete Riemannian manifolds with two ends, one is an AF end and another is an asymptotically conical end. In the case of the dimension $3 \leq n \leq 7$, the results of the positive mass theorems on manifold with arbitrary ends in \cite{LUY-JDG, Zhu-IMRN-23} imply the mass $m(M, g_\delta) \geq 0$. Then our conclusion follows as well.
}
\end{rmrk}

\section{Rigidity}\label{sec-rigidity}
In this section, we prove that when the mass is zero, then $M$ is isometric to $\mathbb{R}^{n}$. We first prove that $M$ is Ricci flat, then show that it must be the Euclidean space. 

First of all, we state the following Sobolev inequality on AF manifold with a single conical singularity, which can be obtained by combining the Sobolev inequality on cone (see, Lemma 3.2 in \cite{DW-MRL-2020}) and that on the Euclidean space. It will be used in the proof of Lemma \ref{Lem-conformal-equation}.

\begin{lem}\label{lem-Sobolev-inequality}
Let $ (M^n, g, o) $ be a $n$-dimensional AF conical singularity at $o$. There is a Sobolev constant $C>0$ such that
\begin{equation}
\left( \int_{M} |f|^{\frac{2n}{n-2}} d\vol_{g} \right)^{\frac{n-2}{2n}} \leq C \left( \int_{M} |\nabla f|^2 d\vol_g \right)^{\frac{1}{2}}
\end{equation}
holds for all $f \in C^{\infty}_{0}(\mathring{M})$.
\end{lem}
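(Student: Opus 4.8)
The plan is to derive this weighted Sobolev inequality by a standard partition-of-unity argument, splitting $M$ into the conical region near $o$, the asymptotically flat region near infinity, and the compact middle piece, and transplanting the known Sobolev inequalities on each model.

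First I would fix a partition of unity subordinate to the decomposition of $M$: take smooth cutoffs $\psi_0, \psi_\infty, \psi_{\mathrm{mid}}$ with $\psi_0^2 + \psi_\infty^2 + \psi_{\mathrm{mid}}^2 = 1$ (squared cutoffs are convenient for Sobolev-type estimates), where $\psi_0$ is supported in the conical neighborhood $U_o$, $\psi_\infty$ is supported in $M_\infty$, and $\psi_{\mathrm{mid}}$ is supported in a fixed compact subset of $\mathring{M}$ on which $g$ is a smooth metric on a compact manifold. For $f \in C^\infty_0(\mathring{M})$ write $f^2 = \psi_0^2 f^2 + \psi_\infty^2 f^2 + \psi_{\mathrm{mid}}^2 f^2$ and, using $\|f\|_{L^{2n/(n-2)}}^2 \le \sum \|\psi_j f\|_{L^{2n/(n-2)}}^2$ (which follows from $\|\,\cdot\,\|_{L^q}$ being a norm together with $\psi_0^2+\psi_\infty^2+\psi_{\rm mid}^2=1$ and $q>2$), reduce to bounding each piece $\|\psi_j f\|_{L^{2n/(n-2)}}$.

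Next I would handle the three pieces. On the conical piece, $\psi_0 f$ is compactly supported in $U_o\setminus\{o\}$; since $g = \overline g + h$ with $h$ a lower-order perturbation of the exact cone metric $\overline g = dr^2 + r^2 g^N$, the Sobolev inequality on the cone $C(N)$ from Lemma~3.2 in \cite{DW-MRL-2020} applies to $\overline g$, and the perturbation $h$ (being $O(r^\alpha)$, hence small near $o$, and controlled away from $o$) changes the volume form and gradient norm only by bounded factors, so $\|\psi_0 f\|_{L^{2n/(n-2)}(g)} \le C\|\nabla(\psi_0 f)\|_{L^2(g)}$. On the AF piece, $g = g_{\R^n} + O(\rho^{-\tau})$ is uniformly equivalent to the Euclidean metric on $M_\infty \cong \R^n\setminus B_R$, and $\psi_\infty f$ extends by zero to a compactly supported function on $\R^n$, so the classical Euclidean Sobolev inequality gives $\|\psi_\infty f\|_{L^{2n/(n-2)}(g)} \le C\|\nabla(\psi_\infty f)\|_{L^2(g)}$. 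On the compact middle piece one uses the Sobolev inequality on a closed manifold (after capping off, or on a compact manifold with boundary containing the support), which holds for $W^{1,2}$ functions, to get $\|\psi_{\mathrm{mid}} f\|_{L^{2n/(n-2)}} \le C(\|\nabla(\psi_{\mathrm{mid}}f)\|_{L^2} + \|\psi_{\mathrm{mid}} f\|_{L^2})$; the lower-order $L^2$ term can be absorbed because on this fixed compact set $\psi_{\rm mid}^2 \le \psi_0^2 + \psi_\infty^2 + \psi_{\rm mid}^2 = 1$ is controlled and the overlap regions are where $\psi_0$ or $\psi_\infty$ are bounded below, allowing the middle $L^2$ mass to be dominated by the cone/Euclidean gradient terms via the inequalities already obtained, or alternatively one simply notes $f$ has compact support and applies a global Poincaré-type bound — I would use the overlap argument to avoid any completeness subtleties.

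Finally I would reassemble: each $\|\nabla(\psi_j f)\|_{L^2} \le \|\psi_j \nabla f\|_{L^2} + \|f\,\nabla\psi_j\|_{L^2}$, and since the $\nabla\psi_j$ are bounded and supported in the fixed overlap regions, the terms $\|f\,\nabla\psi_j\|_{L^2}$ are all bounded by $C\|f\|_{L^2(K)}$ for a fixed compact $K$; these are then absorbed into $C\|\nabla f\|_{L^2(M)}$ by the local Sobolev/Poincaré inequality on $K$ (using compact support of $f$), yielding the clean inequality with no $\|f\|_{L^2}$ term on the right. The main obstacle I anticipate is the bookkeeping in the overlap regions — ensuring that the zeroth-order terms generated by differentiating the cutoffs are genuinely controlled by the gradient of $f$ alone rather than by $\|f\|_{L^2}$, which is what makes the final inequality homogeneous; this is exactly the standard subtlety in gluing Sobolev inequalities and is resolved by choosing the middle region large enough that the overlap annuli sit well inside a region where the full Euclidean/cone Sobolev inequalities (with their scaling-invariant, gradient-only right-hand sides) are available.
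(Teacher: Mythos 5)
Your overall architecture (partition of unity; cone Sobolev inequality from Lemma 3.2 of \cite{DW-MRL-2020} near $o$; Euclidean Sobolev inequality on $M_\infty$; a compact middle piece) is exactly the combination the paper indicates, so the approach itself is fine. The genuine gap is at the step you yourself flag as the main obstacle: eliminating the zeroth-order terms $\|f\,\nabla\psi_j\|_{L^2}$ so that the final inequality contains only $\|\nabla f\|_{L^2}$. Neither of the two mechanisms you propose actually closes this. The suggestion that the overlap $L^2$ mass is ``dominated by the cone/Euclidean gradient terms via the inequalities already obtained'' is circular: those inequalities bound $\|\psi_j f\|_{L^{2n/(n-2)}}$ by $\|\nabla(\psi_j f)\|_{L^2}$, which contains the very cutoff terms you are trying to control, and converting $\|f\|_{L^2(K)}$ to $\|f\|_{L^{2n/(n-2)}(M)}$ by H\"older produces a coefficient $C\,\mathrm{vol}(K)^{1/n}\sup|\nabla\psi_j|$ that has no reason to be less than $1$, so it cannot be absorbed. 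The alternative, ``absorbed into $C\|\nabla f\|_{L^2(M)}$ by the local Sobolev/Poincar\'e inequality on $K$,'' fails as stated: $f$ does not vanish on $\partial K$ and has no mean-zero normalization, so no Poincar\'e inequality of the form $\|f\|_{L^2(K)}\le C\|\nabla f\|_{L^2(K)}$ holds (constants on $K$ violate it). Relocating the transition annuli ``well inside'' the model regions does not change any of this, since scale invariance of the model inequalities is not the issue.

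What is actually needed is one of the standard quantitative devices. Either (i) estimate the cutoff terms by H\"older as $\|f\,\nabla\psi_j\|_{L^2}\le\|\nabla\psi_j\|_{L^n}\,\|f\|_{L^{2n/(n-2)}}$ and choose logarithmic cutoffs over long annuli $\{r_1\le r\le r_0\}$ and $\{R_0\le\rho\le R_1\}$ so that $\|\nabla\psi_j\|_{L^n}$ is as small as you like, then absorb $\epsilon\|f\|_{L^{2n/(n-2)}(M)}$ into the left-hand side (which is finite since $f\in C^\infty_0(\mathring M)$), taking care that the piece whose support swallows the long annuli still carries a Sobolev constant independent of $r_1,R_1$; or (ii) prove directly a Hardy-type bound $\|f\|_{L^2(K)}\le C\|\nabla f\|_{L^2(M)}$ for the fixed overlap annuli, e.g.\ on the AF end by writing $f(\rho,\omega)=-\int_\rho^\infty\partial_t f(t,\omega)\,dt$ (using that $f$ vanishes near infinity) and Cauchy--Schwarz against the weight $t^{n-1}$, and similarly on the conical region by the classical Hardy inequality $\int r^{-2}f^2\,d\vol_{\g}\le\bigl(\tfrac{2}{n-2}\bigr)^2\int|\partial_r f|^2\,d\vol_{\g}$, which is uniform thanks to $n\ge3$ and the asymptotic control of $g$. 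Either device makes the argument non-circular and yields the homogeneous inequality; without one of them the proof as written does not go through. (A minor point: for the middle piece you can avoid its lower-order term altogether by using the Dirichlet Sobolev--Friedrichs inequality for $\psi_{\mathrm{mid}}f$, which is compactly supported in the interior of a fixed compact manifold with boundary.)
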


Then as analysis preparations for proving the rigidity result in Theorem \ref{thm-main} (stated in Theorem \ref{thm-rigidity} below), we solve Schr\"odinger equation with compactly supported potential function on AF manifolds with a conical singularity in Lemma \ref{Lem-conformal-equation}; and solve harmonic functions, which are asymptotic to Euclidean coordinate near the AF infinity in Lemma \ref{lem-harmonic-coordinate}. We also study their asymptotic behaviors near the conical singularity, which are critical in the proof of Theorem \ref{thm-rigidity}.

\begin{lem}\label{Lem-conformal-equation}
Let $(M^n, g, o)$ be an AF manifold with a conical singularity at $o$. There exists a constant $\epsilon_0 = \epsilon(g) $, such that if $f$ is a smooth function with compact support in $\mathring{M}$ and $ \| f_{-} \|_{L^{\frac{n}{2}}(M)} < \epsilon_0$, then the equation
\begin{equation}\label{eqn-u}
\begin{cases}
    -\Delta_g u + f u =0,\\
    u\to 1 \mbox{ as } x\to\infty,
\end{cases}
\end{equation}
has a positive solution admitting the asymptotic as:
\begin{equation}\label{eqn-asymptotic-u-rigidity}
u = 
\begin{cases}
 1 + \frac{A}{\rho^{n-2}} + u_1 \ \ 
 & \text{as} \ \ \rho \to \infty, \\
 B + a(y) r^{\nu_1} + u_2 \ \ 
 & \text{as} \ \ r \to 0,
\end{cases}
\end{equation}
where $A$ and $B$ are constants, and $B \geq 1$, 
\begin{equation}
    \nu_1 := \frac{2-n + \sqrt{(n-2)^2 + 4 \lambda_1}}{2},
\end{equation}
here $\lambda_1$ is the first non-zero eigenvalue of the Laplacian $\Delta_{g^N}$ and $a(y)$ is a corresponding eigenfunction, i.e. $\Delta_{g^N} a(y) = - \lambda_1 a(y)$, and $u_1$ and $u_2$ have asymptotic as
\begin{equation}\label{eqn-asymptotic-u1}
    |\nabla^i u_1| = o(\rho^{\beta - i}), \ \ \text{as} \ \ \rho \to +\infty, \ \ \text{for} \ \ 2-n-\tau < \beta < 2-n, \ \ i = 0, 1,
\end{equation}
and
\begin{equation}\label{eqn-asymptotic-u2}
    |\nabla^i u_2| = o(r^{\delta - i}), \ \ \text{as} \ \ r \to 0, \ \ \text{for} \ \ \nu_1 < \delta < \nu_1 + \alpha, \ \ i=0, 1.
\end{equation}

\end{lem}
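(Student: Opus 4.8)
The plan is to solve \eqref{eqn-u} by a standard fixed-point/variational argument and then extract the asymptotic expansions \eqref{eqn-asymptotic-u-rigidity} from the weighted analysis of Section~\ref{sect-analysis}, exactly as was done for the harmonic function in Lemma~\ref{lem-harmonic-function}. First I would write $u = 1 + w$, so that \eqref{eqn-u} becomes $-\Delta_g w + f w = -f$ with $w \to 0$ at infinity; since $f$ has compact support in $\mathring M$, the right-hand side $-f$ lies in $L^2_{\delta-2,\beta-2}(M)$ for every pair of noncritical weights with $\delta < 0$ and $\beta > 2-n$. Using the surjectivity of $\Delta_{\delta,\beta}$ from Proposition~\ref{prop-surjectivity} together with the Sobolev inequality of Lemma~\ref{lem-Sobolev-inequality}, I would set up the operator $L = -\Delta_g + f$ and show that, because $\|f_-\|_{L^{n/2}(M)} < \epsilon_0$ for a suitable $\epsilon_0 = \epsilon_0(g)$ coming from the Sobolev constant, the quadratic form $\int_M (|\nabla w|^2 + f w^2)$ is coercive on the completion of $C^\infty_0(\mathring M)$ in the $\int |\nabla \cdot|^2$ norm. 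This gives a weak solution $w$ of $Lw = -f$ in that space; elliptic regularity on the smooth part makes it smooth there, and the weighted elliptic estimates (Proposition~\ref{prop-weight-elliptic-estimate}) make it lie in the appropriate weighted Sobolev spaces near $o$ and near infinity.

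Next I would upgrade $w$ to the stated asymptotics. Near the AF infinity, $f \equiv 0$ outside a compact set, so $\Delta_g w = 0$ there and Proposition~4 of \cite{Minerbe-CMP} gives $w = A\rho^{2-n} + u_1$ with $u_1 \in L^2_{\beta'}(M_\infty)$ for $2-n-\tau < \beta' < 2-n$; the derivative bound in \eqref{eqn-asymptotic-u1} then follows from the weighted elliptic bootstrap (as in the proof of Lemma~\ref{lem-harmonic-function}, using the weighted Sobolev embedding of \cite{DW-MRL-2020}) applied to the equation $\Delta_g u_1 = -\Delta_g(A\rho^{2-n}) = O(\rho^{-n-\tau})$. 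Near the cone point, again $f \equiv 0$, so $\Delta_g w = 0$ there as well; writing $u = 1 + w$ I would apply Proposition~\ref{prop-asmptotic-order}: starting from membership of $u$ in some $L^2_\delta(B_{2r_0})$, the harmonic function $u$ equals a linear combination of the functions $\mathcal H^\pm_{j,\phi_j}$ with $\delta < \nu^\pm_j < \delta'$ modulo an element of $L^2_{\delta'}$. Choosing the weights so that the only cone-critical exponents caught are $\nu_0^- = 2-n$, $\nu_0^+ = 0$, and $\nu_1 = \nu_1^+$, one peels off the $r^{0}$ term (a constant $B$, coming from the $\lambda_0 = 0$ eigenspace) and the next term $a(y) r^{\nu_1}$, with $a \in E_1$; the remainder $u_2 = \mathcal H^{\pm}_{\cdots}$-corrections plus the $L^2_{\delta'}$ piece has the decay \eqref{eqn-asymptotic-u2} after one more round of weighted elliptic bootstrapping. (One must also check $\nu^-_j < 2-n$ for $j \geq 1$ and that no negative $\nu^-_j$ with $\nu^-_j > 2-n$ intervenes for $j\geq 1$, so that the leading singular behavior is at worst $\rho^{2-n}$-type and does not actually appear — precisely because $w\in L^2_{\beta}$ with $\beta$ close to $2-n$ forbids a genuine $r^{2-n}$ term near $o$; this is what forces $B$ rather than a pole.)

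Finally, positivity and the inequalities $B \geq 1$ and (implicitly) the sign of things. Since $u \to 1$ at infinity and $u$ solves $-\Delta_g u + f u = 0$ with $f_-$ small, I would run a maximum-principle argument: if $u$ dipped to a nonpositive value somewhere, the coercivity of the quadratic form together with testing against $u_- = \max\{-u,0\}$ (whose $L^{2n/(n-2)}$-norm is controlled by its Dirichlet energy via Lemma~\ref{lem-Sobolev-inequality}) would give $u_- \equiv 0$, hence $u > 0$ on the smooth part; since $u$ is harmonic near $o$ and asymptotically equal to the constant $B$, the strong maximum principle forces $B = \inf_{\text{near } o} u$, and comparing with the value $1$ at infinity through a harmonic/Schrödinger comparison on the intermediate region yields $B \geq 1$. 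The main obstacle I anticipate is the bookkeeping in the second paragraph: correctly pinning down which cone exponents $\nu^\pm_j$ fall into the chosen weight window — in particular ruling out a $r^{2-n}$-type singular term at $o$ and confirming that the expansion truncates exactly after the $a(y) r^{\nu_1}$ term — requires care with Definition~\ref{defn-critical-cone} and the ordering of the $\nu^\pm_j$, and this is where the argument is most delicate.
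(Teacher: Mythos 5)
Your overall architecture is the one the paper uses: reduce to $-\Delta_g w + f w = -f$, exploit the smallness of $\|f_-\|_{L^{n/2}}$ together with the Sobolev inequality of \lemref{lem-Sobolev-inequality} to solve this equation, get positivity by a maximum-principle/test-function argument, and read off the two expansions from \propref{prop-asmptotic-order} (resp.\ Minerbe's result at infinity) followed by the weighted elliptic bootstrap for \eqref{eqn-asymptotic-u1}--\eqref{eqn-asymptotic-u2}. Replacing the paper's construction (Dirichlet problems on annuli $A_{r,\rho}$, Fredholm alternative, uniform $C^{2,\alpha}$ bounds, Arzel\`a--Ascoli) by a coercivity/Lax--Milgram argument on the Dirichlet-norm completion of $C^\infty_0(\mathring{M})$ is a legitimate variant; note, though, that your appeal to \propref{prop-surjectivity} does no work here, since surjectivity of $\Delta_{\delta,\beta}$ says nothing about the operator with potential --- the coercive quadratic form is what solves the equation.

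There are two genuine gaps. First, to apply \propref{prop-asmptotic-order} at the cone and conclude that the expansion begins with a constant $B$ rather than an $r^{2-n}$ pole, you must first place the solution in $L^2_{\delta}(B_{r_0})$ for some noncritical $\delta$ \emph{strictly greater} than $2-n$; your parenthetical claim that ``$w\in L^2$ with weight close to $2-n$ forbids a genuine $r^{2-n}$ term'' is backwards unless the weight lies above $2-n$ (for $\delta<2-n$ the pole is perfectly admissible), and you never establish such an initial weight --- this is precisely the ``delicate bookkeeping'' you leave open. The paper supplies it by a barrier argument applied to the approximating solutions: $C_0 r^{2-n+\epsilon}$ is a supersolution for small $r_0$, so the maximum principle gives $|v|\le C_0 r^{2-n+\epsilon}$ uniformly (and $|v|\le C_1\rho^{2-n+\epsilon}$ at infinity), hence $v\in L^2_{\delta,\beta}$ with $2-n<\delta<2-n+\epsilon$ before \propref{prop-asmptotic-order} is invoked. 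In your variational setting one could instead convert the global $L^{2n/(n-2)}$ bound from \lemref{lem-Sobolev-inequality} into $w\in L^2_{\delta}(B_{r_0})$ for every $\delta<\tfrac{2-n}{2}$ by H\"older, which also excludes the pole, but that step has to be carried out explicitly. Second, your derivation of $B\ge 1$ by ``comparing with the value $1$ at infinity through a harmonic/Schr\"odinger comparison'' is not an argument and cannot work as stated: for admissible $f\ge 0$ one has $\Delta_g u = fu\ge 0$, so the maximum principle pushes $u$ \emph{below} its limit $1$ at infinity rather than above it; no comparison based only on the behavior at the two ends can yield $B\ge1$. The paper's route is different --- it deduces $B=1+B_1\ge1$ from a sign assertion on $v=u-1$ coming out of its maximum-principle analysis, not from a comparison at infinity --- so your write-up needs a genuine substitute for this step (and the example with $f\ge0$ shows the constant-term inequality is a point where extra care is required in any treatment).
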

\begin{proof}
{\bf Existence of a positive solution}:
Let $v=1-u$. Then equation \eqref{eqn-u} becomes
 \begin{equation}\label{FNEQ}
      \Delta_{g} v-fv=-f.
   \end{equation}
   On a compact subset $A_{r,\rho}:= B_{\rho}(o) \setminus B_{r}(o)$, consider the Dirichlet problem
   \begin{equation}\label{NPEQ}
    \begin{cases}
      \Delta_{g} v_{r,\rho}-fv_{r,\rho}=-f, & \text{in}\; A_{r,\rho}, \\
      v_{r,\rho}=0,  & \text{on}\; \partial A_{r,\rho}.
    \end{cases}
   \end{equation}
   By Fredholm alternative, if the homogeneous equation
   \begin{equation}\label{UE}
    \begin{cases}
      \Delta_{g} v_{r,\rho}-fv_{r,\rho}=0, & \text{in}\; A_{r,\rho} \\
      v_{r,\rho}=0, & \text{on}\; \partial A_{r,\rho}
    \end{cases}
   \end{equation}
   has only zero solution, 
   then equation (\ref{NPEQ}) has a unique solution. Suppose $\omega$ is a solution of equation (\ref{UE}). Multiplying $\omega$ to both sides of the equation in (\ref{UE}) and integrating by parts, by the H\"older inequality with $p=\frac{n}{2},q=\frac{n}{n-2}$ and the Sobolev inequality in \lemref{lem-Sobolev-inequality}, we have
   \begin{eqnarray*}
     \int_{A_{r,\rho}}|\nabla \omega|^2 d\operatorname{vol}_{g}&=& -\int_{A_{r,\rho}}f\omega^2d\operatorname{vol}_{g}\leq \int_{A_{r,\rho}}f_{-}\omega^2d\operatorname{vol}_{g}\\
     &\leq&\left(\int_{A_{r,\rho}}f_{-}^{\frac{n}{2}}d\operatorname{vol}_{g}\right)^{\frac{2}{n}}\left(\int_{A_{r,\rho}}\omega^{\frac{2n}{n-2}}d\operatorname{vol}_{g}\right)^{\frac{n-2}{n}}  \\
      &\leq&c_{1} \left(\int_{A_{r,\rho}}f_{-}^{\frac{n}{2}}d\operatorname{vol}_{g}\right)^{\frac{2}{n}}\left(\int_{A_{r,\rho}}|\nabla \omega|^2d\operatorname{vol}_{g}\right)
   \end{eqnarray*}
   Thus if $\|f_{-}\|_{L^{\frac{n}{2}}(M)}< \frac{1}{c_{1}}$, then $\omega=0$. Therefore, equation (\ref{NPEQ}) has a unique solution $v_{r, \rho}$.
   Multiplying $v_{r,\rho}$ to both sides of (\ref{NPEQ}), using H\"older inequality and Sobolev inequality again,
   \begin{eqnarray*}
     \int_{A_{r,\rho}}|\nabla v_{r,\rho}|^2d\operatorname{vol}_{g} &\leq& \int_{A_{r,\rho}}f_{-}v^2_{r,\rho}d\operatorname{vol}_{g}+\int_{A_{r,\rho}}fv_{r,\rho}d\operatorname{vol}_{g} \\
      &\leq& c_{1}\left(\int_{A_{r,\rho}}f_{-}^{\frac{n}{2}}d\operatorname{vol}_{g}\right)^{\frac{2}{n}}\left(\int_{A_{r,\rho}}|\nabla v_{r,\rho}|^2d\operatorname{vol}_{g}\right)\\
      &&+\left(\int_{A_{r,\rho}}|f|^{\frac{2n}{n+2}}d\operatorname{vol}_{g}\right)^{\frac{n+2}{2n}}\left(\int_{A_{r,\rho}}v_{r,\rho}^{\frac{2n}{n-2}}d\operatorname{vol}_{g}\right)^{\frac{n-2}{2n}}\\
      &\leq&c_{1}\left(\int_{A_{r,\rho}}f_{-}^{\frac{n}{2}}d\operatorname{vol}_{g}\right)^{\frac{2}{n}}\left(\int_{A_{r,\rho}}|\nabla v_{r,\rho}|^2d\operatorname{vol}_{g}\right)\\
      &&+c_{1}\left(\int_{A_{r,\rho}}|f|^{\frac{2n}{n+2}}d\operatorname{vol}_{g}\right)^{\frac{n+2}{2n}}\left(\int_{A_{r,\rho}}| \nabla v_{r,\rho}|^{2}d\operatorname{vol}_{g}\right)^{\frac{1}{2}}
   \end{eqnarray*}
   As a consequence, there is a constant $c_{2}$ depending on $(M,g,f)$ such that $\|v_{r,\rho}\|_{L^{\frac{2n}{n-2}}}<c_{2}$ and $\|\nabla v_{r,\rho}\|_{L^2}<c_{2}$. The standard theory of elliptic equations concludes that $v_{r,\rho}$ have uniformly bounded $C^{2,\alpha}$ norms. By Arzela-Ascoli theorem we may pass to a limit and conclude that equation \eqref{FNEQ} has a solution $v$. Then $u := 1 + v$ is a solution of \eqref{eqn-u}.

   We claim that $u$ is a positive solution. First, we show that $u$ is nonnegative. Otherwise, $\Omega_{-}:= \{ x \in M \mid u(x) < 0\}$ is nonempty open set, and its boundary $\partial \Omega_{-} = u^{-1}(0)$ is a $(n-1)$-dimensional manifold, except on a closed set of lower dimension. We consider the Dirichlet boundary problem:
   \begin{equation}\label{eqn-u-nonnegative}
    \begin{cases}
      \Delta_{g} u-fu=0, & \text{in}\; \Omega_{-}, \\
      u= 0, &\text{on}\; \partial \Omega_{-}.
    \end{cases}
   \end{equation}
   Similarly as showing that \eqref{UE} has only zero solution, we can show that \eqref{eqn-u-nonnegative} has also only zero solution, i.e. $u = 0$ in $\Omega_{-}$. This contradicts with the construction of $\Omega_{-}$. Thus, $u$ must be nonnegative.
   Then by applying the strong maximum principle, one can see that $u$ must be positive everywhere.

   {\bf Asymptotics of the solution}: Because $f$ has compact support away from the singular point $o$, there exits sufficiently small $r_0 > 0$ such that $\Delta_g v_{r, \rho} = 0$ on $A_{r, r_0} = B_{r_0}(o) \setminus B_r(o)$ for all $0< r < r_0$. For a small $\epsilon > 0$, there exists a sufficient large constant $C_0 >0$ such that 
   $|v_{r, \rho}| \leq \frac{C_0}{r^{n-2-\epsilon}}$ on $\partial B_{r_0}(o)$ for all $r>0$, since $v_{r, \rho}$ have uniformly bounded $C^{2, \alpha}$-norm. Note that $C_0$ only depends on $r_0$ and $\epsilon$. Moreover,
   \begin{eqnarray}
   \Delta_g \left( C_0 r^{2-n+\epsilon} \right) 
   & = & \Delta_{\g}\left( C_0 r^{2-n+\epsilon} \right) + (\Delta_g - \Delta_{\g})\left( r^{2-n+\epsilon} \right) \\
   & = & \epsilon (2-n+\epsilon) C_0 r^{-n+\epsilon} + O(r^{-n+\epsilon + \alpha}),
   \end{eqnarray}
   and so we can choose $r_0$ sufficiently small so that $\Delta_g \left( C r^{2-n+\epsilon} \right) \leq 0$ on $B_{r_0}(o)$. Thus, we have
   \begin{equation}
   \begin{cases}
   \Delta_g \left( C_0 r^{2-n+\epsilon} \pm v_{r, \rho} \right) \leq 0, & \text{in} \ \ A_{r, r_0}, \\
   C_0 r^{2-n+\epsilon} \pm v_{r, \rho} \geq 0, & \text{on} \ \ \partial A_{r, r_0}.
   \end{cases}
   \end{equation}
   Then maximum principle implies 
   \begin{equation}
   C_0 r^{2-n+\epsilon} \pm v_{r, \rho} \geq 0, \ \ \text{on} \ \ A_{r, r_0}, \ \ \forall 0 < r < r_0.
   \end{equation}
   In other words, $|v_{r, r_0}| \leq C_0 r^{2-n+\epsilon}$ on $A_{r, r_0}$ for all $0 < r < r_0$. By taking limit, we conclude that a solution $v$ of \eqref{FNEQ} satisfies
   \begin{equation}
       |v| \leq C_{0}r^{2-n+\epsilon} \ \ \text{on} \ \ B_{r_0}(o).
   \end{equation}

Similarly, we can show that for sufficiently large $\rho_0>0$ and $C_1 > 0$, the solution $v$ of \eqref{FNEQ} satisfies
\begin{equation}
    |v| \leq C_1 \rho^{2-n+\epsilon} \ \ \text{on} \ \ M \setminus B_{\rho_0}(o).
\end{equation}

Consequently, we obtain that $v \in L^{2}_{\delta, \beta}$ for $2-n < \delta < 2-n + \epsilon <0$ and $ 2-n + \epsilon < \beta < 0$, by \eqref{eqn-polynomial-weighted-Sobolev-condition}. Note that
\begin{equation}
\Delta_g v = fv - f \in L^{2}_{\delta^\prime - 2, \beta^\prime - 2}, \ \ \text{for} \ \ \delta^\prime > 0 \ \ \text{and} \ \ \beta^\prime < 2-n,
\end{equation}
since $f$ vanishes near the conically singular point and near the AF infinity. Then because there is no critical index between $2-n$ and $0$ at either cone point or infinity, by applying Proposition \ref{prop-asmptotic-order} and the maximal principle, there are constants $A_1$ and $B_1$
such that 
\begin{equation}
    v = \begin{cases}
         \frac{A_1}{\rho^{n-2}} + u_1, & \text{as} \ \ \rho \to \infty, \\
        B_1 + a(y) r^{\nu_1} + u_2, & \text{as} \ \ r \to 0,
        \end{cases}
\end{equation}
where
\begin{equation}
    \nu_1 := \frac{2-n + \sqrt{(n-2)^2 + 4 \lambda_1}}{2},
\end{equation}
here $\lambda_1$ is the first non-zero eigenvalue of the Laplacian $\Delta_{g^N}$ and $a(y)$ is a corresponding eigenfunction, i.e. $\Delta_{g^N} a(y) = - \lambda_1 a(y)$, 
$u_1 \in L^{2}_{\beta^\prime}$ for some $\beta^\prime< 2-n$, and $u_2 \in L^{2}_{\delta^\prime}$ for some $\delta^\prime > \nu_1$. Note that $B_1 \geq 0$, since $v$ is a nonnegative function on $M \setminus \{o\}$.

Moreover, because $\Delta_g v = 0$ on $B_{r_0}(o)$, we have:
\begin{equation*}
\Delta_g u_2 = - \Delta_g(B_1 + a(y)r^{\nu_1}) = - (\Delta_g - \Delta_{\g})(a(y) r^{\nu_1}) = O(r^{\nu_1 - 2 + \alpha}) \in L^{p}_{\delta - 2}(B_{r_0}(o)),
\end{equation*}
for all $p>1$ and $\nu_1 < \delta < \nu_1 + \alpha$ by \eqref{eqn-polynomial-weighted-Sobolev-condition}. Then by applying the weighted elliptic estimate in Proposition \ref{prop-weight-elliptic-estimate}  (with $p = 2$), we obtain $u_2 \in W^{2, 2}_{\delta}(B_{r_0}(o))$ for $\nu_1 < \delta < \nu_1 + \alpha$. The weighted Sobolev embedding further implies $u_2 \in L^{\frac{2n}{n-2}}_{\delta}(B_{r_0}(o))$ for $\nu_1 < \delta < \nu_1 + \alpha$. Applying Proposition \ref{prop-weight-elliptic-estimate} (with $p=\frac{2n}{n-2}$) again gives $u_2 \in W^{2, \frac{2n}{n-2}}_\delta (B_{r_0}(o))$. By repeating this process, we can obtain that $u_2 \in W^{2, p}_\delta (B_{r_0}(o))$ for all $p>1$ and $\nu_1 < \delta < \nu_1 + \alpha$. Then weighted Sobolev embedding implies 
\begin{equation}
|\nabla^i u_2| = o(r^{\delta - i}), \ \ \text{as} \ \ r \to 0,
\end{equation}
for $\nu_1 < \delta < \nu_1 + \alpha$ and $i = 0, 1$.

Similarly, we can obtain that $u_1$ has the asymptotic at infinity as in (\ref{eqn-asymptotic-u1}).

Thus, 
\begin{equation}
u = 1 + v = 
\begin{cases}
 1 + \frac{A}{\rho^{n-2}} + u_1 \ \ 
 & \text{as} \ \ \rho \to \infty, \\
 B + a(y) r^{\nu_1} + u_2 \ \ 
 & \text{as} \ \ r \to 0,
\end{cases}
\end{equation}
where $u_1$ and $u_2$ have asymptotic as in (\ref{eqn-asymptotic-u1}) and (\ref{eqn-asymptotic-u2}) respectively, and $B= 1 + B_1 \geq 1$. This completes the proof of the lemma.  
\end{proof}

\begin{lem}\label{lem-harmonic-coordinate}
Let $(M^n, g, o)$ be a AF manifold with a single conical singularity at $o$, and $\{x_i\}^{n}_{i=1}$ is the Euclidean coordinate on $M_\infty$ given by the diffeomorphism in Definition \ref{defn-AF-conic-mfld}. There exist harmonic functions $y_i$, $1 \leq i \leq n$, on $M$ admitting the following asymptotic:
\begin{equation}\label{eqn-asymptotic-of-harmonic-coordinate}
    y_{i}=\begin{cases}
        c+r^{\frac{2-n+\sqrt{(n-2)^2+4\lambda_1}}{2}}\phi_{i}+O(r^{\nu'}), & \mbox{ near the conical point } o,\\
        x_{i}-u_{i}, & \mbox{ near the AF infinity},
    \end{cases}
\end{equation}
where $\lambda_{1}$ is the first nonzero eigenvalue of the Laplacian $\Delta_{g^N}$ on the cross section $(N, g^N)$ and $\phi_i$ is a corresponding eigenfunction, i.e. $\Delta_{g^N} \phi_i = - \lambda_1 \phi_i $, and 
\begin{equation*}
\nu^\prime > \frac{2-n + \sqrt{(n-2)^2 + 4 \lambda_1}}{2}>0,
\end{equation*}
and $c\in\mathbb{R}$ is a constant. Moreover,
$u_i$ satisfy
\begin{equation}\label{eqn-asymptotic-of-harmonic-coordinate-infinity}
|u_i| + \rho|\partial u_i| + \rho^2|\partial^2 u_i| = o(\rho^{1-\tau^\prime}), \ \ \text{as} \ \ \rho \to +\infty,
\end{equation}
with $\tau^\prime = \tau - \epsilon > \frac{2-n}{2}$ for sufficiently small $\epsilon >0$.
\end{lem}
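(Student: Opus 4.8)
The plan is to mimic the construction in \lemref{lem-harmonic-function}, now prescribing the Euclidean coordinate $x_i$ as the leading term at the AF end in place of a Green's-type function at the cone. As a preliminary reduction I would replace $\tau$ by a smaller value if necessary (this only weakens \defref{defn-AF-conic-mfld} while keeping $\tau>\frac{n-2}{2}$), so that $\frac{n-2}{2}<\tau<n-1$; this guarantees that $(1-\tau,0)$ contains no index critical at infinity (\defref{defn-critical-infinity}), which prevents a spurious $\rho^{2-n}$-term. First I would fix a cutoff $\chi$ with $\chi\equiv 1$ on $M\setminus B_{2R}(o)$ and $\chi\equiv 0$ on $B_R(o)$ and set $w_i:=\chi x_i$. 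Since $\Delta_{g_{\R^n}}x_i=0$ and $g=g_{\R^n}+O(\rho^{-\tau})$ with the stated derivative control, $\Delta_g w_i$ is compactly supported plus a term $O(\rho^{-1-\tau})$ near infinity, and vanishes near $o$; by \eqref{eqn-polynomial-weighted-Sobolev-condition} this puts $\Delta_g w_i\in L^2_{\delta-2,\beta-2}(M)$ for every $\delta\in(\frac{2-n}{2},0)$ and every $\beta\in(1-\tau,0)$ (all such exponents being noncritical, for the reduced $\tau$). Then \propref{prop-surjectivity} gives $v_i\in L^2_{\delta,\beta}(M)$ with $\Delta_g v_i=\Delta_g w_i$, and I would take $y_i:=w_i-v_i$, which is harmonic on $M\setminus\{o\}$.

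It then remains to extract the two asymptotic expansions. Near the AF end $w_i=x_i$, so $y_i=x_i-v_i$ and $u_i:=v_i$. Here $v_i\in L^2_\beta(M_\infty)$ with $\beta<0$ and $\Delta_g v_i=O(\rho^{-1-\tau})$, and since $(1-\tau,0)$ carries no index critical at infinity, Proposition~4 in \cite{Minerbe-CMP} (used exactly as in \lemref{lem-harmonic-function}) shows $v_i$ has no homogeneous harmonic term and lies in $L^2_{\beta'}(M_\infty)$ for every noncritical $\beta'\in(1-\tau,0)$. Iterating \propref{prop-weight-elliptic-estimate} (raising the integrability exponent) with the weighted Sobolev embedding (Proposition~3.4 in \cite{DW-MRL-2020}) upgrades this to $|u_i|+\rho|\partial u_i|+\rho^2|\partial^2 u_i|=o(\rho^{1-\tau+\epsilon})$ for arbitrarily small $\epsilon>0$; with $\tau':=\tau-\epsilon$ this is \eqref{eqn-asymptotic-of-harmonic-coordinate-infinity} and the second line of \eqref{eqn-asymptotic-of-harmonic-coordinate}.

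Near the conical point $w_i\equiv 0$, so $y_i=-v_i$ and $v_i$ is harmonic there with $v_i\in L^2_\delta(B_{2r_0})$. I would invoke \propref{prop-asmptotic-order} with this $\delta$ and an auxiliary noncritical exponent $\delta'\in(\nu_1^+,\min\{\nu_1^++\alpha,\ \nu_j^+\ \text{with}\ \lambda_j>\lambda_1\})$. By \eqref{eqn-nu-defn} the only $\nu_j^{\pm}$ in $(\delta,\delta')$ are $\nu_0^+=0$ and those $\nu_j^+$ with $\lambda_j=\lambda_1$ (all equal to $\nu_1^+$): every $\nu_j^-\le\nu_0^-=2-n<\frac{2-n}{2}<\delta$, and monotonicity of $\nu_j^+$ in $\lambda_j$ leaves nothing strictly between $0$ and $\nu_1^+$. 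Since $\Delta_g 1=0$ for any metric, the $\nu_0^+=0$ mode of \corref{cor-harmonic-spinor-on-cone} is just the constant function, and the $\nu_1^+$ mode contributes $\mathcal H^+_{1,\psi_i}=r^{\nu_1^+}\psi_i+(\text{term decaying faster than }r^{\nu_1^+})$ with $\psi_i\in E_1$. Hence \propref{prop-asmptotic-order} yields $v_i=c'+r^{\nu_1^+}\psi_i+\zeta_i$ with $\zeta_i\in L^2_{\delta'}(B_{2r_0})$; as $\Delta_{\g}(r^{\nu_1^+}\psi_i)=0$ and $\Delta_g v_i=0$ near $o$, one has $\Delta_g\zeta_i=-(\Delta_g-\Delta_{\g})(r^{\nu_1^+}\psi_i)=O(r^{\nu_1^+-2+\alpha})$, and a weighted elliptic bootstrap as in \lemref{lem-harmonic-function} gives $|\nabla^j\zeta_i|=o(r^{\nu'-j})$ for $j=0,1$ and some $\nu'>\nu_1^+$. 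Setting $c:=-c'$ and $\phi_i:=-\psi_i$ (so $\Delta_{g^N}\phi_i=-\lambda_1\phi_i$) gives the first line of \eqref{eqn-asymptotic-of-harmonic-coordinate}.

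The hard part is the bookkeeping near the conical point: one must verify, from \eqref{eqn-nu-defn}, that in the window $(\delta,\delta')$ exactly the constant mode $\nu_0^+=0$ and the first-eigenvalue mode $\nu_1^+$ of the model Laplacian survive, that the Green's-type mode $r^{2-n}=r^{\nu_0^-}$ and all the $\nu_j^-$ are excluded by the choice $\delta>\frac{2-n}{2}$, and that the $\nu_0^+$-mode model harmonic function is literally the constant; after that, everything reduces to the weighted-elliptic/weighted-Sobolev bootstrapping already used in \lemref{lem-harmonic-function} and \lemref{Lem-conformal-equation}. The preliminary reduction $\tau<n-1$ is a minor but necessary point, harmless since the AF condition of order $\tau$ implies that of any smaller order exceeding $\frac{n-2}{2}$.
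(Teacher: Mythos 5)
Your construction is essentially identical to the paper's proof: cut off the Euclidean coordinate, solve $\Delta_g v_i=\Delta_g(\chi x_i)$ via \propref{prop-surjectivity}, and then extract the cone expansion from \propref{prop-asmptotic-order} (your careful bookkeeping that only the constant mode and the $\lambda_1$-modes $\nu_1^{+}$ survive, and that constants are exactly harmonic, is precisely what the paper delegates to Lemma 5.2 of \cite{DSW-spin-23}) and the decay at infinity from the weighted theory of \cite{Minerbe-CMP}, with your reduction to $\tau<n-1$ matching the paper's implicit assumption $1-\tau>2-n$. The only detail to tighten is the pointwise bound on $\rho^{2}|\partial^{2}u_i|$, which requires a scaled Schauder/Nash--Moser step (as in Lemma 5.1 of \cite{DSW-spin-23}) rather than only iterating \propref{prop-weight-elliptic-estimate} with the weighted Sobolev embedding, a routine addition.
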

\begin{proof}
Let $\chi$ be a cut-off function supported in $M_\infty$ and $\chi = 1$ outside of a compact subset of $M_\infty$. Then 
\begin{equation*}
\Delta (\chi x_i) = 
       \begin{cases}
       O(\rho^{ - \tau -1}), & \text{as} \ \  \rho \to +\infty, \\
       0, & \text{near the conical point} \ \ o,
       \end{cases}
\end{equation*}
and so by \eqref{eqn-polynomial-weighted-Sobolev-condition}
\begin{equation*}
\Delta(\chi x_i) \in L^{2}_{\delta-2, \beta-2}(M), \ \ \text{for} \ \ 2-n < \delta <0, \, \beta > -\tau +1 > 2-n.
\end{equation*}
Therefore, Proposition \ref{prop-surjectivity} implies that there exists $u_i \in L^2_{\delta, \beta}(M)$ for $2-n < \delta <0, \, \beta > -\tau +1 > 2-n$ such that
\begin{equation*}
\Delta u_i = \Delta (\chi x_i).
\end{equation*}
By letting $y_i = \chi x_i - u_i$, we obtain harmonic functions $y_i$, for $1 \leq i \leq n$.

Then note that near the conical point $o$, $y_i = -u_i$, and similarly as in the proof of Lemma 5.2 in \cite{DSW-spin-23}, applying Proposition \ref{prop-asmptotic-order} gives the asymptotic of $y_i$ near the conical point as in (\ref{eqn-asymptotic-of-harmonic-coordinate}). In addition, similarly as in the proof of Lemma 5.1 in \cite{DSW-spin-23},  applying a Nash-Moser iteration argument gives the asymptotic of $u_i$ near infinity as in (\ref{eqn-asymptotic-of-harmonic-coordinate-infinity}).
\end{proof}

Now we are ready to prove the rigidity result in Theorem \ref{thm-main} as following.
\begin{thm}\label{thm-rigidity}
Let $(M^n, g, o)$ be an AF manifold with a single conical singularity at $o$. If the scalar curvature $\Sc_g \geq 0$ and the ADM mass $m(g) = 0$, then $(M^n, g)$ is isometric to the Euclidean space $(\R^n, g_{\R^n})$.
\end{thm}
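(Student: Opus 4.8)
The plan is to upgrade the regularity of the singular metric near $o$ until the rigidity theorem for continuous $W^{1,n}$ metrics in \cite{JSZ2022} applies, and then to invoke that result together with the Euclidean structure at infinity. First I would establish that $m(g)=0$ and $\Sc_g\ge 0$ force the metric to be Ricci flat on $M\setminus\{o\}$. This is the standard argument: if $\Ric_g\not\equiv 0$, one deforms $g$ inside a compact domain away from $o$ to produce a nearby metric of strictly positive scalar curvature somewhere while keeping $\Sc\ge 0$; applying \lemref{Lem-conformal-equation} with $f=\tfrac{n-2}{4(n-1)}\Sc$ of the deformed metric (whose negative part has small $L^{n/2}$ norm once the deformation is small) produces a conformal factor $u\to 1$ at infinity, and the conformally changed metric $u^{4/(n-2)}g$ is scalar-flat, still asymptotically flat with the same leading AF structure, but has strictly smaller mass by the expansion $u=1+A\rho^{2-n}+\dots$ with $A<0$ — contradicting \thmref{thm-nonnegativity-of-mass}. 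Hence $\Ric_g=0$ away from the cone point; in particular $\Sc_g=0$, so by \eqref{eqn-scalar-curvature-cone} the cross section satisfies $\Sc_{g^N}=(n-1)(n-2)$ to leading order.

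Next I would use the harmonic coordinate functions $y_i$ from \lemref{lem-harmonic-coordinate}. Since $g$ is Ricci flat, $\Delta_g y_i=0$ together with Bochner's formula forces $|\nabla^2 y_i|$ to be controlled, and in harmonic coordinates the metric components $g(\nabla y_i,\nabla y_j)$ satisfy an elliptic system; the key point is to analyze the behavior of the $y_i$ as $r\to 0$. By \eqref{eqn-asymptotic-of-harmonic-coordinate} the leading term is $c+r^{\nu_1}\phi_i+O(r^{\nu'})$ with $\nu_1=\tfrac{2-n+\sqrt{(n-2)^2+4\lambda_1}}{2}$ and $\phi_i$ a first-eigenfunction on $(N,g^N)$. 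The gradients $\nabla y_i$ asymptotically span the tangent space (they do so at infinity, and harmonicity plus Ricci flatness propagates this), which forces the functions $r^{\nu_1}\phi_i$, $1\le i\le n$, to behave like $n$ independent coordinates near the tip of the model cone $dr^2+r^2g^N$. Comparing with the Euclidean cone $\R^n=C(\mathbb S^{n-1})$, where the analogous linear coordinates are $r\,\omega_i$ with $\omega_i$ the restriction of the ambient coordinates (first eigenfunctions with $\lambda_1=n-1$, $\nu_1=1$), this pins down $\nu_1=1$, hence $\lambda_1=n-1$, and the eigenspace $E_1$ is $n$-dimensional. Then Obata's theorem (in the sharp eigenvalue form: a closed manifold with $\Ric_{g^N}\ge(n-2)g^N$ — which holds here since $\Ric_{\g}=0$ gives $\Ric_{g^N}=(n-2)g^N$ — and $\lambda_1=n-1$ must be the round sphere) yields $(N,g^N)\cong(\mathbb S^{n-1},g_{\mathrm{round}})$.

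Once the cross section is the round sphere, the model cone $\g=dr^2+r^2g^N$ is flat $\R^n$, so near $o$ the metric is $g=g_{\R^n}+h$ with $|h|=O(r^\alpha)$; in particular $g$ extends continuously across $o$ (the cone point becomes a genuine manifold point) with $g(o)=g_{\R^n}(o)$, and $|\nabla g|_{g_{\R^n}}=O(r^{\alpha-1})$, which is in $L^n$ of a neighborhood of $o$ since $\int_0^1 r^{n(\alpha-1)}\,r^{n-1}\,dr<\infty$ when $\alpha>0$. Thus $(M^n,g)$ is a smooth AF manifold equipped with a metric that is globally $C^0\cap W^{1,n}_{\mathrm{loc}}$, smooth away from one point, Ricci flat on the smooth part, with $\Sc_g\ge 0$ distributionally and $m(g)=0$. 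The rigidity statement of \cite{JSZ2022} then applies verbatim to conclude $(M^n,g)$ is isometric to $(\R^n,g_{\R^n})$.

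The main obstacle is the middle step: rigorously converting the asymptotic expansion \eqref{eqn-asymptotic-of-harmonic-coordinate} of the harmonic coordinates into the conclusion $\lambda_1=n-1$ with the correct multiplicity. One must show the leading-order maps $y=(y_1,\dots,y_n)$ are, near $o$, an asymptotically nondegenerate coordinate system — i.e. that the $n$ chosen first-eigenfunctions $\phi_i$ together with the radial direction genuinely fill out $TM$. This uses that the $y_i$ are asymptotic to the Euclidean coordinates at the AF end (so their differentials are independent there), harmonicity on the connected Ricci-flat manifold $M\setminus\{o\}$, and a unique-continuation / maximum-principle type argument to transport nondegeneracy of $dy$ from infinity to the cone neighborhood, ruling out $\nu_1>1$ (too few independent leading coefficients) and $\nu_1<1$ (incompatible with the metric extending, via the scalar curvature constraint and $\alpha>0$). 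Carefully handling the error terms $O(r^{\nu'})$ and the possibly higher eigenvalues contributing to $dy$ — and checking $E_1$ has dimension exactly $n$ so Obata's equality case triggers — is where the real work lies.
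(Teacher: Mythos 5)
Your overall skeleton matches the paper's proof: (i) use \lemref{Lem-conformal-equation} together with \thmref{thm-nonnegativity-of-mass} to force $\Ric_g=0$, (ii) identify the cross section $(N,g^N)$ as the round sphere via the harmonic functions of \lemref{lem-harmonic-coordinate} and Obata, (iii) conclude with the $C^0\cap W^{1,n}$ rigidity of \cite{JSZ2022}. Steps (i) and (iii) are essentially the paper's (in (i) the paper runs the first-variation version, $0=\frac{d}{dt}m(\tilde g_t)|_{t=0}=\int_M h^{jk}\Ric_{jk}$, and avoids your sign issue by taking the potential $f=\frac{n-2}{4(n-1)}(\Sc_{g_t}-\Sc_g)$, so that $\Sc_{\tilde g_t}=u_t^{-4/(n-2)}\Sc_g\ge 0$ automatically; your compactly supported deformation cannot literally keep $\Sc\ge 0$ pointwise because of the divergence terms in the linearized scalar curvature, so you would need the integral computation of the mass change instead). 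The genuine gap is in step (ii), and you have correctly located it but proposed the wrong mechanism to close it.

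Your plan is to show $\nu_1=1$ by ``propagating'' nondegeneracy of $dy=(dy_1,\dots,dy_n)$ from the AF end to the cone tip using harmonicity, Ricci flatness and unique continuation. This cannot work, because that argument never uses $m(g)=0$, while the conclusion $\lambda_1=n-1$ is false without it: any Ricci-flat asymptotically conical neighborhood whose cross section is a non-round Einstein manifold with $\Ric_{g^N}=(n-2)g^N$ has $\lambda_1>n-1$ by Lichnerowicz--Obata, and nothing in harmonicity or unique continuation forbids such a tip on an AF manifold of positive mass; there the harmonic coordinates simply degenerate, $|dy_i|=O(r^{\nu_1-1})\to 0$ as $r\to 0$, which is perfectly consistent with $dy$ being nondegenerate at infinity and with $dy_i$ not vanishing on open sets. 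The missing idea is the quantitative link between the mass and the Hessians of the $y_i$: since $\Ric_g=0$, Bochner gives $\Delta_g\bigl(\tfrac12|dy_i|^2\bigr)=|\nabla dy_i|^2$; integrating over $A_{r,\rho}$, using Bartnik's result that the $y_i$ may be used to compute $m(g)$ at infinity, and checking that the inner boundary term vanishes as $r\to 0$ \emph{because} $\lambda_1\ge n-1$ (Lichnerowicz, from $\Ric_{g^N}=(n-2)g^N$), one obtains
\begin{equation*}
\omega_n\, m(g)=\sum_{i=1}^n\int_M |\nabla dy_i|^2\, d\vol_g .
\end{equation*}
Only now does $m(g)=0$ enter: it forces $\nabla dy_i=0$, hence $|dy_i|\equiv 1$ from the AF end, and comparing with the expansion \eqref{eqn-asymptotic-of-harmonic-coordinate} forces the exponent $\frac{2-n+\sqrt{(n-2)^2+4\lambda_1}}{2}$ to equal $1$, i.e.\ $\lambda_1=n-1$. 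Also note that Obata's theorem needs only $\Ric_{g^N}=(n-2)g^N$ together with $\lambda_1=n-1$; your requirement that $E_1$ have dimension exactly $n$ is not needed and would be an extra (unproved) claim in your route.
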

\begin{proof}
First we show that if $\Sc_g \geq 0$ and $m(g) =  0$ then $g$ is Ricci flat. 
For that, we take a variation $g_{t}=g+th$ of the metric $g$, where $h$ is an arbitrary compactly supported $2$-tensor on $\mathring{M}$. Note that for each $t$ such that $|t|$ is sufficiently small, $g_t$ is still an AF manifold with a single conical singularity at $o$. Then we consider the following equation:
\begin{equation}\label{eqn-rigidity-conformal-equation}
   \begin{cases}
    -\Delta_{g_{t}}u+\frac{n-2}{4(n-1)} \left(\Sc_{g_{t}} -\Sc_{g}\right)u =0,  & \text{on} \ \  \mathring{M},\\
    u \to  1, & \mbox{ as } x\to \infty.
    \end{cases}
\end{equation}
For each $t$ such that $|t|$ is sufficiently small, by Lemma \ref{Lem-conformal-equation}, the equation (\ref{eqn-rigidity-conformal-equation}) has a positive solution $u_{t}$, since $h$ is compactly supported and so is $\Sc_{g_{t}}-\Sc_{g}$, and $\left\|\left( \Sc_{g_t} - \Sc_g \right)\right\|_{L^{\frac{n}{2}}(M)}$ can be arbitrary small as $|t|$ sufficiently small. Therefore, for each $t$ such that $|t|$ is sufficiently small, by the asymptotic of $u_t$ as in (\ref{eqn-asymptotic-u-rigidity}), $\tilde{g}_{t} := u^{\frac{4}{n-2}}_{t}g_{t}$ is a AF metric with a conically singular point at $o$ as in Definition \ref{defn-AF-conic-mfld}. Because $u_t$ is a solution of (\ref{eqn-rigidity-conformal-equation}), we have 
\begin{equation*}
\Sc_{\tilde{g}_{t}} = \frac{4(n-1)}{n-2} (u_t)^{-\frac{n+2}{n-2}} \left( -\Delta_{g_t} u_t + \frac{n-2}{4(n-1)} \Sc_{g_t} u_t \right) = (u_t)^{-\frac{4}{n-2}} \Sc_g \geq 0.
\end{equation*}
Thus, Theorem \ref{thm-nonnegativity-of-mass} implies that the mass $m(\tilde{g}_t) \geq 0$, and so $t=0$ is a interior minimum point of the function $m(\tilde{g}_t)$, since $u_0 \equiv 1, g_0 = g$ and $m(g)=0$.
As a result, 
\[\left.0=\frac{d m(\tilde{g}_{t})}{dt}\right|_{t=0}=\int_{M}h^{jk}\Ric_{jk} d\operatorname{vol}_{g},\]
which implies that $g$ is Ricci flat, since $h$ is arbitrary.

Now we prove that the cross section $(N, g^N)$ of the conical neighborhood of the singular point is the standard sphere. The idea is to compute the mass, $m(g)$, using the harmonic coordinate, $y_i$, obtained in Lemma \ref{lem-harmonic-coordinate}, with the help of Ricci-flatness of $g$ that we have already obtained. As a consequence, $m(g) = 0$ then implies that $dy_i$ are parallel 1-forms, and this then enables us to apply Obata's rigidity theorem for $(N, g^N)$.

Note that for an asymptotically conical metric, $g$, as in Definition \ref{defn-conic-mfld}, its Ricci tensor, $\operatorname{Ric}_{g}$, satisfies
\begin{equation*}
\operatorname{Ric}_g (X, Y) = \operatorname{Ric}_{g^N}(X, Y) - (n-2)g^{N}(X, Y) + O(r^{\alpha}), \ \ \text{as} \ \ r \to 0,
\end{equation*}
for all $X, Y$ tangent to the cross section $N$. Recall that the decay order $\alpha > 0$. Thus, $\Ric_{g} = 0$, implies $\operatorname{Ric}_{g^{N}}=(n-2)g^{N}$ on the cross section. As a result, the Lichnerowicz eigenvalue estimate implies that the first nonzero eigenvalue of $\Delta_{g^N}$, $\lambda_1 \geq n-1$.

Let $y_i, 1 \leq i \leq n,$ be harmonic functions obtained in Lemma \ref{lem-harmonic-coordinate}. By the asymptotic of $y_i$ at infinity as in (\ref{eqn-asymptotic-of-harmonic-coordinate-infinity}), the argument as in the proof of Theorem 4.4 in \cite{Bartnik-CPAM} implies that we can compute the mass, $m(g)$, using $y_i$ as coordinate on $M_\infty$. In addition, by Bochner formula, recall that $g$ is Ricci flat, we have
\begin{eqnarray}
\Delta_{g}\left(\frac{1}{2}|dy_{i}|^2\right)=|\nabla dy_{i}|^2.    
\end{eqnarray}
Then by integrating by part, we have
\begin{eqnarray*}
\omega_n m(g)&=&\lim_{\rho\to \infty}\left[\sum_{i=1}^{n}\int_{\partial B_{\rho}}\partial_{\nu_{\rho}}\left(\frac{1}{2}|dy_{i}|^2\right)\right]\\
&=&\lim_{r\to 0}\left[\sum_{i=1}^{n}\int_{\partial B_{r}}\partial_{\nu_{r}}\left(\frac{1}{2}|dy_{i}|^2\right)\right]+\lim_{r\to0,\rho\to\infty}\left[\sum_{i=1}^{n}\int_{A_{r,\rho}}\left(|\nabla dy_{i}|^2\right)\right]\\
&=&\sum_{i=1}^{n}\int_{M}\left(|\nabla dy_{i}|^2\right)    
\end{eqnarray*}
where $\nu_{\rho}$ and $\nu_{r}$ are the outer normal vector of $\partial B_{\rho}$ and $\partial B_{r}$ respectively, and $\omega_n$ is the volume of the unit sphere in $\R^n$. Here in the last step, we use that the asymptotic behavior of $y_i$ near the conical point as in \eqref{eqn-asymptotic-of-harmonic-coordinate} and the fact $\lambda_1 \geq n-1$.
As a result, $m(g)=0$ implies that $\nabla dy_{i}=0$, i.e. $dy_{i}$ is parallel. Then $|dy^{i}|=1$, since $du_{i}=O(\rho^{-\tau})$. Thus, by considering the asymptotic behavior of $dy_i$ near the conical point, the exponent in $(\ref{eqn-asymptotic-of-harmonic-coordinate})$:
\begin{equation}\label{eqn-exponent}
\frac{2-n+\sqrt{(n-2)^2+4\lambda_{1}}}{2}=1,
\end{equation}
because otherwise $|dy_{i}|$ will tend to either zero or infinity as $r \to 0$, i.e. as approaching to the conical point. 
Therefore, by solving (\ref{eqn-exponent}), we obtain that $\lambda_{1}=n-1$ is an eigenvalue of $\Delta_{g^N}$.
Then we apply the Obata's rigidity theorem to conclude that $(N, g^N)$ must be the standard sphere, since we have shown that $Ric_{g^N} = (n-2)g^N$. As a consequence, the conically singular point $o$ is actually a manifold point, and our metric $g$ is continuous and has $W^{1, n}$-regularity near the conical singularity point, and so it satisfies the regularity assumption in \cite{JSZ2022}. By Theorem 1.1 in \cite{JSZ2022}, we conclude that $(M^{n},g)\cong(\mathbb{R}^{n},g_{\mathbb{R}^{n}})$.
\end{proof}
\begin{rmrk}
{\rm
    In \cite{DSW-spin-23}, for the proof of rigidity, after we get the manifold is Ricci flat, then we can construct the harmonic coordinate to conclude that the manifold is the Euclidean space as we do in the above.
}
\end{rmrk}


\bibliographystyle{plain}
\bibliography{DSW_PMT.bib}

\end{document}